\documentclass[11pt]{amsart}
\usepackage{amssymb}
\usepackage{graphicx}
\usepackage{xcolor} 
\usepackage{tensor}
\usepackage{fullpage} 
\usepackage{amsmath}
\usepackage{amsthm}
\usepackage{verbatim}
\usepackage{txfonts, bm, mathtools}
\usepackage{dsfont}
\usepackage{mathrsfs}
\usepackage{appendix}

\usepackage{enumitem}
\setlist[enumerate]{leftmargin=1.5em}
\setlist[itemize]{leftmargin=1.5em}

\setlength{\marginparwidth}{.6in}
\setlength{\marginparsep}{.2in}

\usepackage{hyperref}
\usepackage{seqsplit,mathtools} 
\usepackage{caption}
\usepackage{subcaption}
\usepackage{ulem}

\usepackage{thmtools}
\usepackage{thm-restate}
\usepackage{lipsum}

\usepackage{dirtytalk}
\usepackage{tikz}
\newcommand*\circled[1]{\tikz[baseline=(char.base)]{
        \node[shape=circle,draw,inner sep=1pt] (char) {#1};}}

\mathtoolsset{showonlyrefs}


\definecolor{green}{rgb}{0,0.5,0} 

\newcommand{\dd}{\textrm{d}}



\newtheorem{thm}{Theorem}[section]

\newtheorem{lem}[thm]{Lemma}
\newtheorem{prop}[thm]{Proposition}

\newtheorem{defn}[thm]{Definition}

\newtheorem{rmk}[thm]{Remark}

\theoremstyle{definition}

\numberwithin{equation}{section}
\newcommand{\nrm}[1]{\Vert#1\Vert}

\newcommand{\set}[1]{\{#1\}}
\newcommand{\tld}[1]{\widetilde{#1}}

\newcommand{\nnrm}[1]{{\vert\kern-0.25ex\vert\kern-0.25ex\vert #1 
\vert\kern-0.25ex\vert\kern-0.25ex\vert}}

\newcommand{\supp}{{\mathrm{supp}}\,}

\newcommand{\lap}{\Delta}

\newcommand{\rd}{\partial}
\newcommand{\nb}{\nabla}

\newcommand{\imp}{\Rightarrow}

\newcommand{\ift}{\infty}

\newcommand{\alp}{\alpha}

\newcommand{\gmm}{\gamma}
\newcommand{\Gmm}{\Gamma}
\newcommand{\dlt}{\delta}

\newcommand{\eps}{\varepsilon}

\newcommand{\lmb}{\lambda}

\newcommand{\sgm}{\sigma}

\newcommand{\tht}{\theta}

\newcommand{\vtht}{\vartheta}
\newcommand{\omg}{\omega}
\newcommand{\Omg}{\Omega}


\newcommand{\bfe}{{\bf e}}

\newcommand{\bfu}{{\bf u}}
\newcommand{\bfv}{{\bf v}}

\newcommand{\bfx}{{\bf x}}
\newcommand{\bfy}{{\bf y}}
\newcommand{\bfz}{{\bf z}}


\newcommand{\bbR}{\mathbb R}

\newcommand{\calA}{\mathcal A}

\newcommand{\calG}{\mathcal G}

\newcommand{\calK}{\mathcal K}
\newcommand{\calL}{\mathcal L}

\newcommand{\calS}{\mathcal S}



\setcounter{tocdepth}{2}
\setcounter{secnumdepth}{3}

\vfuzz2pt 
\hfuzz2pt 

%
\newcommand{\bfomg}{\boldsymbol{\omg}}		





\newcommand{\f}[2]{\frac{#1}{#2}}       
\newcommand{\ii}[2]{\int_{#1}^{#2}}     

\newcommand{\q}{\mbox{ }}       
\newcommand{\qd}{\quad }

\newcommand{\bfone}{\mathbf{1}}


\begin{document}

\title{Vortex atmospheres of traveling vortices:\\ rigorous definition, existence, and topological classification}

\author{Kyudong Choi}
\address{Department of Mathematical Sciences, Ulsan National Institute of Science and Technology, 50 UNIST-gil, Eonyang-eup, Ulju-gun, Ulsan 44919, Republic of Korea.}
\email{kchoi@unist.ac.kr}

\author{In-Jee Jeong}
\address{Department of Mathematical Sciences and RIM, Seoul National University, 1 Gwanak-ro, Gwanak-gu, Seoul 08826, and School of Mathematics, Korea Institute for Advanced Study, Republic of Korea.}
\email{injee$ \_ $j@snu.ac.kr}

\author{Young-Jin Sim}
\address{Department of Mathematical Sciences, Ulsan National Institute of Science and Technology, 50 UNIST-gil, Eonyang-eup, Ulju-gun, Ulsan 44919, Republic of Korea.}
\email{yj.sim@unist.ac.kr}

\keywords{atmosphere, core, streamline, vortex domain, traveling vortex, Steiner symmetry, simply connected}

\date\today

\begin{abstract}
In incompressible and inviscid fluids, the vortex atmosphere refers to the collection of fluid particles outside the support of a traveling vortex that are nevertheless carried along
with it. This phenomenon has been recognized since the nineteenth century, e.g., in the classical works of O. Reynolds [Nature, 1876] and O. Lodge [Lond. Edinb. Dubl. Phil. Mag., 1885], yet rigorous mathematical definitions and proofs have remained largely undeveloped, with most subsequent studies relying on thin-core approximations or asymptotic analyses.
In this paper, we give a rigorous definition of a vortex atmosphere and establish its existence and uniqueness. We further compare the planar atmosphere surrounding a 2D vortex dipole with the axisymmetric atmosphere surrounding a 3D vortex ring. In particular, we emphasize and prove the topological distinctions
observed by W. Hicks [Lond. Edinb. Dubl. Phil. Mag., 1919]: under natural assumptions, every  2D dipole with its atmosphere forms an
oval-shaped region, whereas for 3D rings, both spheroidal and toroidal configurations may occur.
 Our proof is based on showing that each atmosphere can be characterized precisely as a specific superlevel set of its corresponding stream function.
\end{abstract}
\maketitle

\section{Introduction}

A \textit{traveling vortex} is an Euler flow whose vorticity distribution translates rigidly at a constant velocity. The region where the vorticity is nonzero—called the vortex core—maintains its shape while being transported. Typical examples include a three-dimensional axisymmetric vortex ring (e.g., a smoke ring in air or a bubble ring in water) and a counter-rotating symmetric vortex pair (a dipole) in two dimensions. Although classical studies have revealed many properties of the vortex core, they offer little insight into \textit{how the surrounding irrotational fluid is carried along}. In this paper, we revisit this aspect with a precise analysis of the transport structure outside the core.

\medskip

A key feature of this phenomenon is that the vortex induces a coherent transport of irrotational fluid outside its core. As a vortex moves through a fluid, it may entrain part of the irrotational flow around it. In particular, some fluid particles outside the core revolve around it and are carried along with it; these particles constitute what may be informally viewed as a free-rider region, since they are transported by the vortex without contributing to its momentum. Others, however, fail to match the vortex’s speed and inevitably lag behind.

\medskip
From this perspective, we may distinguish three regions of fluid with distinct characteristics\footnote{for instance, suggested by Hicks \cite{Hicks1919}.}:

\begin{enumerate}
\item 
The rotational fluid that constitutes the vortex core.
\medskip
\item \textit{(Free-rider region)} 
The irrotational fluid outside the core whose particles are transported forward together with region (1).
\medskip
\item 
The irrotational fluid that fails to travel with the core and eventually lags behind.
\end{enumerate}

\medskip\noindent 
From a mathematical viewpoint, the region (2) introduces a new type of invariant region for the Euler flow—one defined not by vorticity but by dynamical transport.
To the best of our knowledge, this notion has not been formalized in a fully rigorous way.
Nevertheless, the underlying phenomenon has long been recognized in hydrodynamics.
In particular, it was already implicitly observed by Reynolds \cite{Reynolds1876} in 1876, in describing the entrainment and carriage of ambient fluid by a vortex ring, although no mathematical formulation was given. He said
\textit{\say{$\cdots$they are continually adding to their bulk water taken up from that which 
surrounds them and with which their forward momentum has to be shared.}} and \textit{\say{The form of the mass of fluid moving forward is not nearly that of the ring, but is an oblate spheroid a good deal longer than this ring which it encloses.}}\footnote{quoted from \cite[pg. 478]{Reynolds1876}.}
Around the same time, Lodge \cite{Lodge1885} in 1885 investigated the relation between the vortex ring's speed and the surrounding streamline patterns of the regions (2) and (3). This region (2) has been referred to in the literature as an \say{atmosphere} (Eisenga \cite{Eisenga1997}, Akhmetov \cite{Akhmetov2009}, Meleshko et al. \cite{MKGK1992, MGK2011}), a \say{vortex bubble} (Sullivan et al. \cite{SNHBD2008}), or a \say{vortex cloud} (Meleshko et al. \cite{MGK2011}); see \cite{MGK2011} for a comprehensive literature review. 

\medskip 
In 1919, Hicks began his paper \cite{Hicks1919} by noting that \say{\textit{the distinction between the rotational region (1) and the irrotational regions (2) and (3) is fundamental and well known. Less attention than it deserves, however, appears to have been devoted to the discussion of the relationship between (2) and (3).}} His remark continues to hold from the viewpoint of rigorous mathematical analysis. As Batchelor in 1967 emphasized in \cite[pg.~523]{Batchelor1967}, \say{\textit{mathematical analysis of such vortex rings is made difficult by ignorance of the exact shape of the cross-sections of the tube containing the vorticity that is compatible with steady motion.}} Consequently, most existing studies on the dynamics of regions (2) and (3) rely on thin-core approximations or numerical computations to infer streamline configurations of vortex rings, and formal mathematical definitions or proofs have been largely absent.

\medskip
Hicks \cite{Hicks1919} conducted detailed investigations and provided a classification of various geometric forms of the region (1)+(2) for a vortex ring with small cross-section, comparing the results with the 2D analogy case: a counter-rotating vortex pair (a {vortex dipole}). He observed that the region (1)+(2) of a 2D point vortex pair forms a single simply-connected component of oval shape surrounding the pair. By contrast, in the case of a vortex ring whose cross-sectional radius is sufficiently small and gradually increases, he found that the fluid speed at the ring’s center on the axis of symmetry—initially smaller than the ring’s traveling speed—eventually increases beyond it.
Depending on whether the speed at the ring's center is less than, equal to, or greater than the ring’s traveling speed, Hicks observed that the region (1)+(2) admits three distinct configurations, namely, a toroidal ring (Figure~\ref{subfig: toroid}), a revolved lemniscate\footnote{A curve having a $\infty$–shape.} (Figure~\ref{subfig: lemniscate}), and a spheroid (Figure~\ref{subfig: spheroid}), in this order. Also see the recent work of Masroor \& Stremler \cite{MS2022}.

\medskip
Subsequent studies reported similar structural transitions in vortex rings. For instance, we refer to Fig. 7.2.4 in pg. 525 in the classical exposition  
Batchelor \cite{Batchelor1967} (see also the more recent review by Wu et al. \cite[Fig. 7.3, pg. 218]{WMZ2015}).
Among them, we emphasize that Norbury  \cite{Norbury1973} in 1973 constructed and numerically described a family of steady vortex rings in which region (1) has a toroidal shape (\cite[Figure 3]{Norbury1973}) and region (1)+(2) takes a spheroidal form (\cite[Figure 4]{Norbury1973}).  For experimental studies, we refer to the classical work of Maxworthy \cite{Maxworthy1972} in 1972 (see also Dabiri \& Gharib \cite{Dabiri2008} and references therein), which demonstrates that vortex rings entrain a considerable amount of surrounding fluid as they propagate.

\begin{figure}
    \centering
\begin{subfigure}[t]{0.32\textwidth}
\includegraphics[width=\linewidth, height=3.5cm]{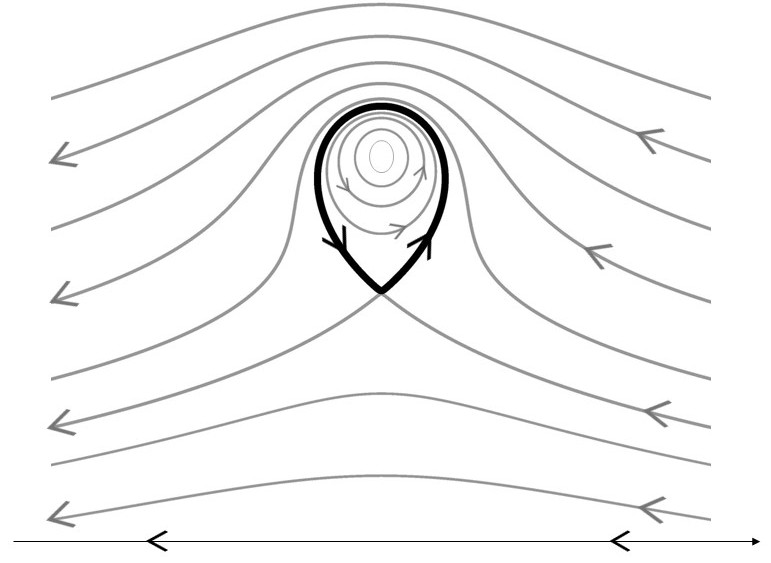}
\caption{}
\label{subfig: toroid}
\end{subfigure}
\begin{subfigure}[t]{0.32\textwidth}
\includegraphics[width=\linewidth, height=3cm]{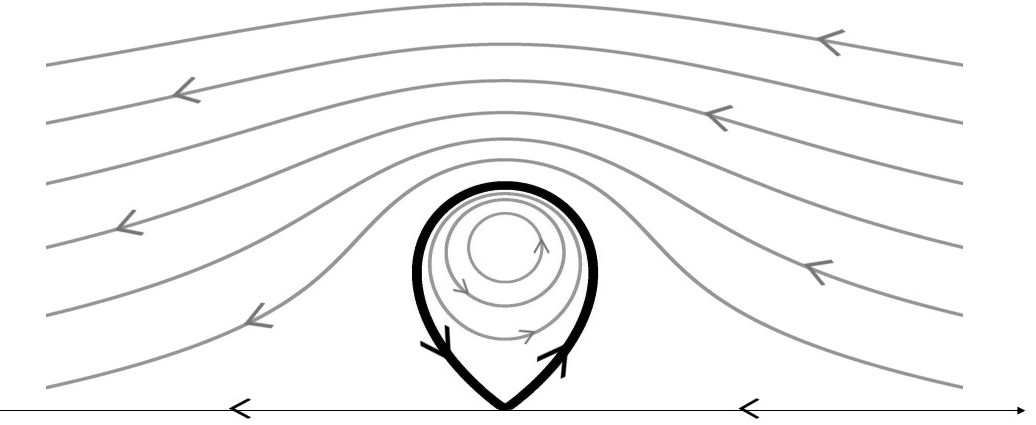}
\caption{}
\label{subfig: lemniscate}
\end{subfigure}
\begin{subfigure}[t]{0.34\textwidth}
\includegraphics[width=\linewidth, height=2.1cm]{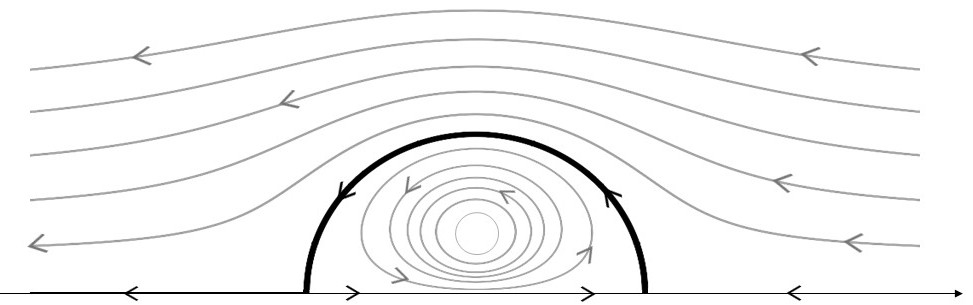}
\caption{}
\label{subfig: spheroid}
\end{subfigure}
\caption{The above figures describe the cross-sections of the region (1)+(2) observed by 
Hicks. In each figure, the rotation axis lies horizontally at the bottom. The region (1)+(2) of a vortex ring can take the form of (A) a toroidal ring, (B) a revoloved lemniscate, or (C) a spheroid.}
\label{fig: Hicks}
\end{figure}

\medskip 
The present paper provides a mathematical treatment of Hicks’s work \cite{Hicks1919}. More precisely, we give rigorous definitions of a steady vortex ring, its two-dimensional analogue (a vortex dipole), and the associated regions (1)+(2) and (2) in each case. Under natural assumptions, we then investigate the geometric configurations and properties of region (1)+(2), highlighting fundamental differences in the flow patterns between rings and dipoles. This yields a rigorous confirmation of several observations made in \cite{Hicks1919} and in subsequent studies.

\medskip 
From now on, we refer 
to region (1) as the \textbf{{vortex core}},
to region (2) as the \textbf{{vortex atmosphere}}, and the union of regions (1) and   (2) as the \textbf{{vortex domain}}\footnote{To the best of our knowledge, there is no standard term in the literature referring to the union of regions (1) and (2); we therefore introduce the term “vortex domain” here.}. In what follows, we outline the structure of the paper and summarize the main results.

\medskip
\begin{itemize}
\item \textbf{Section~\ref{sec: math setting}}: Mathematical framework.
\begin{itemize}
\item Definition of dipole and ring (and their core): 
Definition~\ref{def: 3D ring}, \ref{def: 2d dipole}
    \item Definition of atmosphere (and domain): Definition~\ref{def: 3D atmosphere}, \ref{def: 2D atmosphere}
    \item Existence: Proposition~\ref{prop: existence}
    \item Uniqueness: Remark~\ref{rmk: unique}
\end{itemize}
\medskip
\item \textbf{Section~\ref{sec: compare ring and dipole}}: Comparison of representative examples of vortex domains.

\textit{We analyze the well-known examples, which are symmetrically concentrated along the $x_2$-axis for dipoles ($r$-axis for rings).}

\begin{itemize}
 \item 2D Vortex dipole: oval domain (Theorem~\ref{thm: 2D atmosphere})
    \item 3D Vortex ring: 
    spheroidal domain (Theorem~\ref{thm: 3D sphere}),  toroidal domain (Theorem~\ref{thm: 3D donut})
   \end{itemize}
\medskip
\item \textbf{Section~\ref{subsec: classif. of atmos.}}: Classification of vortex domains and their superlevel set characterizations.

\textit{We classify vortex domains whose core is simply connected in the half-plane $\{x_2>0\}$ for dipoles ($\{r>0\}$ for rings), and is symmetrically concentrated along the $x_2$-axis  ($r$-axis).}

\begin{itemize}
    \item 2D Vortex dipole: oval domain  (Theorem~\ref{thm: class. of 2D atmos.})
     \item 3D Vortex ring: spheroidal domain, toroidal domain, and revolved-lemniscate domain (Theorem~\ref{thm: class. of 3D atmos.})
\end{itemize}
\medskip
\noindent$\triangleright$ \textit{Extra in Section~\ref{subsec: classif. of atmos.}}: A special 2D vortex dipole.

The vortex atmosphere of a 2D dipole is empty if and only if it is a Sadovskii vortex (a touching pair of counter-rotating vortices; see \cite{CJS2025, CSW2025})
\medskip
\item\textbf{Section~\ref{sec: open problems}}: Open problems related to a vortex domain and atmosphere.
\end{itemize}

\medskip
\subsection{Underlying mechanisms and main assumptions}\q

\medskip

Our primary approach is to analyze the streamline configuration in the moving frame. By viewing a traveling vortex as a stationary object in this frame, we describe the corresponding vortex domain as the set of all fluid particles whose trajectories form bounded streamlines, thereby distinguishing those that are genuinely captured and transported by the vortex from those that merely pass by. This characterization allows us to identify each vortex domain as a specific superlevel set of the relative stream function.

\medskip
Within this framework, we also describe the possible geometric shapes of a given vortex domain. As observed in previous studies, whether the fluid speed at the ring’s center is smaller, equal to, or greater than the ring's traveling speed determines whether particles near the center remain with the ring or lag behind, and this distinction dictates the resulting configuration of the vortex domain. Our analysis shows that this dependence on the center speed arises from the monotonicity and directional bias of the velocity field in the irrotational region. In this region, the stream function satisfies a homogeneous second-order linear equation, which allows us to determine the signs of certain first and second derivatives. These sign properties yield monotonicity and directional information for the velocity components. Comparing the center speed with the traveling speed then reveals precisely how particles near the center move, thereby enabling a rigorous determination of the geometric shape of the vortex domain.

\medskip
The relationship between the center speed and the traveling speed--noted as the determinant of the vortex domain geometry--is precisely what distinguishes a two-dimensional vortex \textbf{dipole} from a three-dimensional vortex ring. For a 2D dipole, the center speed strictly exceeds twice its translation speed. Consequently, its vortex domain must contain the center and is therefore always an oval-shaped region surrounding the dipole. This phenomenon arises from the structure of the dipole: each of its two vortices induces a flow field that reinforces the motion of its counterpart, producing the strongest superposition at the dipole’s midpoint; see Figure~\ref{fig: dipole center}.
\medskip

In contrast, for a 3D \textbf{ring}, although each cross-section in the meridional half-plane resembles the upper piece of a counter-rotating dipole, the ring constitutes a connected vorticity distribution in 3D rather than two localized vortices. Hence, the ring’s center is not directly influenced by the surrounding flow in the same manner. As a result, the possible configurations of vortex domains vary in the three-dimensional ring case, as illustrated in Figure~\ref{fig: Hicks}, unlike the rigidly oval structure arising in the two-dimensional dipole in Figure~\ref{fig: dipole center}.

\medskip
\begin{figure}
\centering
\includegraphics[width=0.85\linewidth]{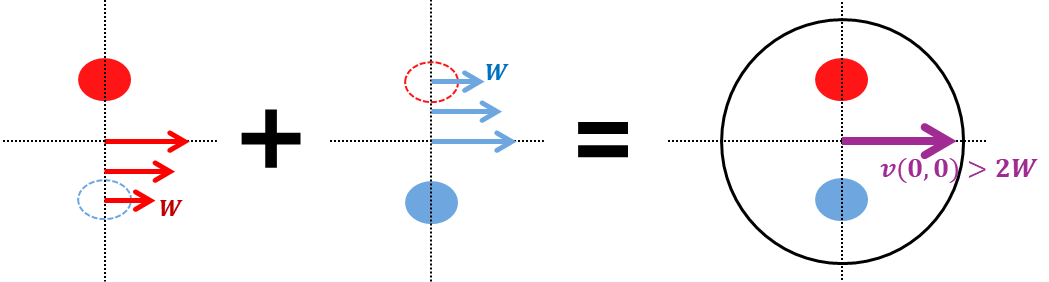}
\caption{
The figure illustrates a two-dimensional vortex dipole composed of two (odd) symmetric vortices, shown in red and blue. Their mutual induction generates a constant translation speed 
$W>0$. The induced velocities reinforce each other most strongly at the dipole’s midpoint, causing the center speed to exceed 
$2W$. Consequently, fluid particles located at the dipole’s center are advected along with the dipole, producing an oval-shaped vortex domain whose boundary is indicated by the solid black curve.
}
\label{fig: dipole center}
\end{figure}

In Section~\ref{sec: compare ring and dipole}, we impose Steiner symmetry on the vorticity profiles. For instance, the vorticity of a vortex ring is required to be symmetrically concentrated about a plane orthogonal to the symmetry axis; see Definition~\ref{def: 3D steiner}. This assumption is natural from a hydrodynamical viewpoint: physically stable steady vortices typically arise as maximizers of kinetic energy under suitable constraints, and Steiner symmetry is known to be a necessary condition for such maximization. A more detailed discussion of this variational interpretation is provided in Remark~\ref{rmk: Steiner is natural}.

\medskip

Section~\ref{subsec: classif. of atmos.} develops this analysis under the additional assumption that the vortex core is simply connected. More precisely, we assume that the cross-section of the core--taken in the meridional half-plane for a 3D ring or in the half-plane for a 2D dipole--is simply connected. To the best of our knowledge, no traveling dipole or ring with a multiply connected core has ever been found. Moreover, most known traveling vortex solutions possess simply-connected cores; examples include Hill’s spherical vortex \cite{Hill1894},   Chaplygin–Lamb dipole \cite{Chap1903, Lamb1993}, Norbury’s steady vortex rings \cite{Norbury1973}, the Sadovskii-type vortex patch constructed by Huang and Tong \cite{HT2025}, the vortex rings of Friedman and Turkington \cite{FT1981} and of Cao et al. \cite{CQYZZ2023}, and the vortex dipoles of Turkington \cite{Turkington1983}, Hmidi \& Mateu \cite{HM2017} (see also \cite{HH2021}), Cao et al. \cite{CLZ2021} (see Remark~2.14 in that paper), and Hassainia \& Wheeler \cite{HW2022}.

\medskip
We start our discussion in Section~\ref{sec: math setting} below by introducing a mathematical background that will be used throughout this paper.

\section{Mathematical setting}\label{sec: math setting}

In this paper, we consider incompressible and inviscid fluids described by the Euler equations of the vorticity form in $\bbR^3$: 
\begin{equation}\label{eq: 3D Euler}
\left\{
\begin{aligned}
&\q\rd_t\bfomg+(\bfv\cdot\nb)\bfomg=(\bfomg\cdot\nb)\bfv\qd\mbox{in}\q[0,\ift)\times\bbR^3,\\
&\q\nb\cdot\bfv=0,\q\nb\times\bfv=\bfomg,\\
&\q\bfomg|_{t=0}=\bfomg_0.
\end{aligned}\right.
\end{equation} Here, $\bfomg:[0,\ift)\times\bbR^3\to\bbR^3$ is the 3D vorticity and $\bfv:[0,\ift)\times\bbR^3\to\bbR^3$ is the 3D velocity that is recovered from its vorticity $\bfomg$ by the 3D Biot-Savart law $\bfv=\nb\times(-\lap_{\bbR^3})^{-1}\bfomg$.

\subsection{Vortex dipole and vortex ring}\q

\medskip
One of the well-known solutions of the 3D Euler equations \eqref{eq: 3D Euler} is an axisymmetric vortex ring without swirl, which steadily propagates along the symmetry axis at a constant speed. Its 2D analogue is a counter-rotating vortex dipole, consisting of a pair of odd-symmetric vortices that are likewise translated at a steady speed in the direction of the symmetry axis. In the sequel, we provide the background and preliminaries needed for a rigorous treatment of steady dipoles and rings.

\subsubsection{Dipole}\q

\medskip
In $\bbR^2$, the equations $\eqref{eq: 3D Euler}$ are  modified as 
\begin{equation}\label{eq: 2D Euler}
\left\{
\begin{aligned}
&\q\rd_t\omg+\bfu\cdot\nb\omg=0\qd\mbox{in}\q[0,\ift)\times\bbR^2,\\
&\q\bfu=\calK_2[\omg],\\
&\q\omg|_{t=0}=\omg_0
\end{aligned}
\right.
\end{equation} where $\omg:[0,\ift)\times\bbR^2\to\bbR$ is the 2D vorticity and $\bfu:[0,\ift)\times\bbR^2\to\bbR^2$ is the 2D velocity given by the 2D Biot-Savart law $\bfu=\calK_2[\omg]=\nb^\perp(-\lap_{\bbR^2})^{-1}\omg$ where $\nb^\perp=(\rd_{x_2},-\rd_{x_1})$. 
To model a counter-rotating dipole, we consider a traveling vortex solution to \eqref{eq: 2D Euler} having the odd-symmetry.

\begin{defn}[Vortex dipole]\label{def: 2d dipole}
A function $\overline{\omg}\in L^\ift(\bbR^2;\bbR)$ is called a vortex dipole if it is odd-symmetric: 
$$\overline\omg(x_1,x_2)=-\overline\omg(x_1,-x_2)\qd\mbox{for each}\q(x_1,x_2)\in\bbR^2,$$ non-negative in the half plane $\set{\bfx\in\bbR^2:x_2>0}$, and satisfies the following properties:\\

\noindent(i) The set $\set{\bfx\in\bbR^2:\overline\omg(\bfx)\neq0}$ is a bounded open subset of $\bbR^2$ (up to a set of measure zero). It is called the core of the vortex dipole $\overline\omg$.\\

\noindent(ii) It is a traveling solution to \eqref{eq: 2D Euler} in the sense that, for some constant $W>0$, the function $\omg:[0,\ift)\times\bbR^2\to\bbR$ defined by $\omg(t,
(x_1,x_2))=\overline\omg(x_1-Wt,x_2)$ solves \eqref{eq: 2D Euler}. The constant $W$ is called the traveling speed of the vortex dipole $\overline\omg$.
\end{defn}

\subsubsection{Ring}\q

\medskip
For an axisymmetric fluid without swirl, the 3D vorticity $\bfomg$ in \eqref{eq: 3D Euler} admits its angular component $\omg^\tht$ only in the usual cylindrical coordinate system $\bfx=(r,\tht,z)$, i.e., 
$$\bfomg=\omg^\tht(z,r)\cdot(-\sin\tht,\cos\tht,0).$$ Then we can rewrite \eqref{eq: 3D Euler} in terms of the \textit{relative vorticity} $\xi$, defined as  $\xi(z,r):=\omg^\tht(z,r)/r$, to obtain
\begin{equation}\label{eq: 3D axi. Euler}
\left\{
\begin{aligned}
&\q\rd_t\xi+\bfv\cdot\nb\xi\q=\q0\qd\mbox{in}\q[0,\ift)\times\bbR^3,\\
&\q\bfv=\calK_3[\xi],\\
&\q\q\xi|_{t=0}\q=\q \xi_0,
\end{aligned}
\right.
\end{equation} where $\bfv=\calK_3[\xi]$ denotes the 3D axi-symmetric Biot-Savart law.
\begin{defn}(Vortex ring)\label{def: 3D ring}
A non-negative function $\overline{\xi}\in L^\ift(\bbR^3;\bbR)$ is called a vortex ring if it is axisymmetric and satisfies the following properties:\\

\noindent(i) The set $\set{\bfx\in\bbR^3:\overline\xi(\bfx)>0}$ is a bounded open subset of $\bbR^3$ (up to a set of measure zero). It is called the core of the vortex ring $\overline{\xi}$.\\

\noindent(ii) It is a traveling solution to \eqref{eq: 3D axi. Euler} in the sense that, for some constant $W>0$, the axisymmetric function $\xi:[0,\ift)\times\bbR^3\to\bbR$ defined by $\xi(t,(z,r))=\overline\xi(z-Wt,r)$ solves \eqref{eq: 3D axi. Euler}. The constant $W$ is called the traveling speed of the vortex ring $\overline\xi$.
\end{defn}

\subsection{Vortex domain and atmosphere}\q

\medskip
We give a rigorous definition of the vortex atmosphere, a notion that, to our knowledge, has not been formalized before.
\begin{defn}\label{def: 3D atmosphere}
For any vortex ring $\overline\xi$ (Definition~\ref{def: 3D ring}) with a traveling speed $W>0$, we say a bounded open set $\Omg\subset\bbR^3$ is the \say{vortex domain} of $\overline\xi$ if it satisfies the following properties:

\medskip
(i) For the 3D flow map $\Phi_3(t,\cdot)$ generated by the relative vorticity $\xi$ given as  $\xi(t,(z,r)):=\overline\xi(z-Wt,r)$, we have 
$$\Phi_3(t,\Omg)=\set{(r,\tht,z+Wt)\in\bbR^3:(r,\tht,z)\in\Omg}\qd\mbox{for each}\q t\geq0.$$

(ii) Any proper superset $\tld{\Omg}\supsetneq\Omg$, which is open and bounded, fails to satisfy (i).

\medskip
\noindent Moreover, the set $\Omg-\overline{\set{\bfx\in\bbR^3:\overline\xi(\bfx)>0}}$ is called the \say{vortex atmosphere} of $\,\overline\xi$, which is the vortex domain minus the closure of the vortex core.
\end{defn}

\begin{defn}\label{def: 2D atmosphere}
For any vortex dipole (Definition~\ref{def: 2d dipole}) with a traveling speed $W>0$, we say a bounded open set $\Omg\subset\bbR^2$ is the \say{vortex domain} of $\overline\omg$ if it satisfies the following properties:

\medskip
(i) For the 2D flow map $\Phi_2(t,\cdot)$ generated by the vorticity $\omg$ given as $\omg(t,(x_1,x_2)):=\overline\omg(x_1-Wt,x_2)$, we have 
$$\Phi_2(t,\Omg)=\set{(x_1+Wt,x_2)\in\bbR^2:(x_1,x_2)\in\Omg}\qd\mbox{for each}\q t\geq0.$$

(ii) Any proper superset $\tld\Omg\supsetneq\Omg$, which is open and bounded, fails to satisfy (i).

\medskip
\noindent Moreover, the set $\Omg-\overline{\set{\bfx\in\bbR^2:\overline\omg(\bfx)\neq0}}$ is called the \say{vortex atmosphere} of $\,\overline\omg$, which is the vortex domain minus the closure of the vortex core.
\end{defn}

Later, in Section~\ref{sec: nonempty atmos.}, we present both nonempty and empty vortex atmosphere configurations.

\begin{rmk}[Uniqueness]\label{rmk: unique}
The uniqueness of a vortex domain follows directly from the maximality condition stated in (ii)-Definition~\ref{def: 3D atmosphere} and (ii)-Definition~\ref{def: 2D atmosphere}. Indeed, for two vortex domains $\Omg,\Omg'\subset\bbR^3$ corresponding to a vortex ring $\overline\xi$, it holds that
$$\Phi_3(t,\Omg\cup\Omg')=
\Phi_3(t,\Omg)\cup \Phi_3(t,\Omg')
\qd\mbox{for each}\q 
t\geq0.$$ The same conclusion holds for vortex dipoles.
\end{rmk} 

\begin{prop}[Existence]\label{prop: existence}
For any vortex ring (Definition~\ref{def: 3D ring}), there exists a unique vortex domain. Likewise, for any vortex dipole (Definition~\ref{def: 2d dipole}), there exists a unique vortex domain.
\end{prop}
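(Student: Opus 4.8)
The plan is to pass to the frame co‑moving with the vortex, in which the problem becomes the construction of a \emph{maximal} bounded open set invariant under a fixed incompressible flow, and then to produce that set as the union of all such sets after establishing an a priori bound on its size. I describe the dipole case in detail; the ring case is entirely parallel, carried out in the meridional half‑plane $\set{r>0}$ with the area element $r\,\ud r\,\ud z$ inherited from the three‑dimensional volume.

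\emph{Step 1 (reduction to the relative flow).} Write $\bfu_0:=\calK_2[\overline{\omg}]$ and $\bfw:=\bfu_0-W\bfe_1$. Since $\overline{\omg}\in L^\ift$ is compactly supported, $\bfu_0$ is bounded and log‑Lipschitz, so the autonomous ODE $\dot{\bfx}=\bfw(\bfx)$ generates a flow $\Phi^{\mathrm{rel}}$ of homeomorphisms of $\bbR^2$, defined for all $t\in\bbR$ and preserving Lebesgue measure (Osgood uniqueness and $\nb\cdot\bfu_0=0$). One checks directly that $\Phi_2(t,\bfx)=\Phi^{\mathrm{rel}}(t,\bfx)+(Wt,0)$, so condition (i) of Definition~\ref{def: 2D atmosphere} holds for $\Omg$ precisely when $\Phi^{\mathrm{rel}}(t,\Omg)=\Omg$ for all $t\geq0$; applying $\Phi^{\mathrm{rel}}(-t,\cdot)$, this is equivalent to $\Phi^{\mathrm{rel}}(t,\Omg)=\Omg$ for all $t\in\bbR$. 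Let $\calU$ be the family of all bounded open $\Phi^{\mathrm{rel}}$‑invariant sets and put $\Omg^{\ast}:=\bigcup_{\Omg\in\calU}\Omg$. Then $\Omg^{\ast}$ is open and invariant, and it is nonempty because the core is itself a member of $\calU$ (it is a bounded open set, and it is invariant since $\overline{\omg}$ is transported by $\Phi^{\mathrm{rel}}$, and two open sets agreeing up to a null set coincide). If $\Omg^{\ast}$ is bounded it satisfies (i), and it satisfies the maximality (ii) tautologically, since any bounded open $\tld{\Omg}$ satisfying (i) lies in $\calU$ and hence in $\Omg^{\ast}$. Uniqueness then follows as in Remark~\ref{rmk: unique}: any other vortex domain lies in $\calU$, hence in $\Omg^{\ast}$, and equals $\Omg^{\ast}$ by its own maximality. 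Everything thus reduces to showing that \emph{$\Omg^{\ast}$ is bounded}.

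\emph{Step 2 (the relative stream function and confinement in the drift direction).} Set $\psi:=(-\lap_{\bbR^2})^{-1}\overline{\omg}$ and $\Psi:=\psi-Wx_2$, so $\bfw=\nb^{\perp}\Psi$. Because the dipole has zero total vorticity, $\psi$ is bounded with $\psi,\nb\psi\to0$ at infinity, quantitatively $\abs{\bfu_0(\bfx)}\aleq\dist(\bfx,\supp\overline{\omg})^{-1}$. Hence: (a) $\Psi$ is constant along trajectories of $\Phi^{\mathrm{rel}}$; (b) $\Psi$ is harmonic off the core and $\Psi\to\mp\ift$ as $x_2\to\pm\ift$; (c) every stagnation point of $\bfw$ (a zero of $\nb\psi-W\bfe_2$) lies in a fixed ball $B_{R_1}$, since it forces $\abs{\bfu_0}=W$; also fix $R_c$ with $\supp\overline{\omg}\sbeq B_{R_c}$. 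Now choose $a$ so large that $\abs{\bfu_0}<W/2$ on the half‑plane $\set{x_1\geq a}$; there $\bfw\cdot\bfe_1<-W/2<0$ throughout, so this half‑plane is invariant under the backward flow and along any backward trajectory remaining in it the first coordinate increases at rate $>W/2$, hence escapes to $+\ift$. Therefore no point of $\set{x_1\geq a}$ has a bounded full orbit, so $\Omg^{\ast}\sbeq\set{x_1<a}$, and symmetrically $\Omg^{\ast}\sbeq\set{x_1>-a}$. (For the ring one uses the planes $\set{z=\pm a}$, and $\Psi=\psi-\tfrac{W}{2}r^{2}$ with $\psi$ the Stokes stream function, which satisfies $\calL\Psi=0$ off the core for $\calL=\rd_z^{2}+\rd_r^{2}-\tfrac{1}{r}\rd_r$, obeys a maximum principle on $\set{r>0}$, vanishes on the axis, and tends to $-\ift$ as $r\to\ift$.)

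\emph{Step 3 (transverse confinement: the main point).} Fix $p\in\Omg^{\ast}$; its full orbit $\gmm_p$ is bounded, lies in $\set{\abs{x_1}<a}$, and carries the constant value $c_p:=\Psi(p)$. It suffices to bound $\abs{c_p}$ by a constant independent of $p$, for then $Wx_2=\psi-c_p$ on $\gmm_p$ forces $\abs{x_2}\leq(\nrm{\psi}_{L^\ift}+\abs{c_p})/W$ there, confining $\Omg^{\ast}$ to a fixed box. I distinguish cases. If $\gmm_p$ is a single stagnation point then $p\in B_{R_1}$ and $\abs{c_p}\leq\sup_{B_{R_1}}\abs{\Psi}$. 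If $\gmm_p$ is a nonconstant periodic orbit it is a Jordan curve bounding a region $D$; were $D$ disjoint from the core, $\Psi$ would be harmonic in $D$ with constant boundary value, hence constant in $D$, forcing $\bfw\equiv0$ on $D$ and thus at $p\in\rd D$, a contradiction; so $D$ contains a core point $q$, and since $\gmm_p$ encircles $q$ its $x_2$‑range straddles $x_2(q)\in[-R_c,R_c]$, whence evaluating $\Psi=c_p$ at the lowest and highest points of $\gmm_p$ gives $\abs{c_p}\leq\nrm{\psi}_{L^\ift}+WR_c$. Finally, if $p$ is not periodic and $\gmm_p$ is not a stagnation point, the forward limit set of $p$ is a nonempty compact connected invariant subset of $\overline{\gmm_p}$; by the Poincar\'e--Bendixson alternative it is a stagnation point, a periodic orbit, or a graphic---it cannot be a periodic orbit $\Gmm$ with $p\notin\Gmm$, since then $\gmm_p$ would spiral onto $\Gmm$, impossible for a flow preserving a smooth positive measure (the induced return map on a transversal would preserve a locally finite measure equivalent to Lebesgue while having a one‑sided attracting fixed point, forcing a bounded interval to have infinite measure)---so it contains a stagnation point $p^{\ast}\in B_{R_1}$ and $c_p=\Psi(p^{\ast})$ is again bounded. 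This proves the claim, hence the boundedness of $\Omg^{\ast}$, and with Step~1 the Proposition. The ring argument is identical, with $\calL$ replacing the Laplacian in the maximum‑principle step, $r\,\ud r\,\ud z$ replacing Lebesgue measure, and $\tfrac{W}{2}r^{2}$ replacing $Wx_2$ (there is no mirror at $r=0$, but $\tfrac{W}{2}r^{2}=\psi-c_p\leq\nrm{\psi}_{L^\ift}+\abs{c_p}$ bounds $r$ from above, which suffices).

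\emph{Main obstacle.} The hard part is Step~3: ruling out large bounded streamlines far from the vorticity. Its core is the dichotomy above---closed streamlines are controlled by the maximum principle for the relative stream function, which forces every periodic orbit to enclose part of the core, while the remaining bounded orbits are controlled by combining the planar (Poincar\'e--Bendixson) structure with incompressibility to exclude orbits spiralling onto a limit cycle. A secondary technical point is that $L^\ift$ vorticity yields only a log‑Lipschitz velocity, so one must verify that $\Phi^{\mathrm{rel}}$ is a genuine measure‑preserving flow of homeomorphisms and that the planar dynamical facts used above remain valid at this regularity.
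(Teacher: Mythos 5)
Your proof is correct in substance, but it takes a genuinely different route from the paper at the decisive point, namely the a priori bound on candidate sets. The paper never passes to the relative stream function: working with the time-dependent flow map and the condition $\Phi_3(t,\Omg)=\Omg+Wt\bfe_z$, it assumes a candidate set reaches far into the region where $|\bfv|<W/10$, tracks the extremal-in-$z$ point of the far portion (forward in time for the bottom point, backward in time for the top point), and derives a contradiction between the slow motion of that particle and the exact translation of the set at speed $W$; this confines both directions at once using only the far-field decay of the velocity, after which maximality is obtained by a union/Zorn argument and uniqueness by Remark~\ref{rmk: unique}, exactly as in your Step~1. You instead pass to the autonomous relative flow, confine the drift direction by the same far-field smallness, and confine the transverse direction via conservation of $\Psi$ together with a Poincar\'e--Bendixson/maximum-principle/area-preservation analysis of bounded orbits. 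That analysis goes through (it only needs a continuous measure-preserving flow with uniqueness, which log-Lipschitz velocity provides, and $\Psi\in C^1$ gives the transversal and level-set structure), and it buys more structure than the paper's argument -- every point of the domain lies on a bounded streamline with $|\Psi|$ controlled, which anticipates the superlevel-set characterizations of Section~\ref{subsec: classif. of atmos.} -- but it is heavier than necessary: since $\psi$ is bounded, conservation of $\Psi$ already pins each orbit to the strip $|x_2-x_2(p)|\le 2\nrm{\psi}_{L^\ift}/W$ (an annulus in $r$ for the ring), so a point with $|x_2|$ (resp.\ $r$) large never approaches the core, and your Step~2 drift argument then makes its orbit unbounded; no Poincar\'e--Bendixson, Jordan-curve, or return-map measure argument is needed. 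Two small blemishes, neither fatal to the Proposition as stated: the parenthetical claim that two open sets agreeing up to a null set coincide is false (a disk and the disk minus a diameter), so exact invariance of the open core needs a different justification -- harmless here because the empty set already belongs to $\calU$, and the paper itself asserts property (i) for the core without proof -- and in the ring case you should say explicitly that, absent swirl, $\tht$ is conserved so meridional half-planes are invariant and candidate sets may be replaced by the union of their rotations, which is what reduces the problem to the half-plane flow you analyze.
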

\begin{proof}
The proof consists of four steps. First, we note that the vortex core satisfies (i) in Definition~\ref{def: 3D atmosphere} (or in Definition~\ref{def: 2D atmosphere}). Second, we show that there exists a bounded set containing every bounded open set satisfying (i). Third, we apply Zorn's lemma to obtain a maximal set. Lastly, the uniqueness follows from Remark~\ref{rmk: unique}.

\medskip
It suffices to explain the second step for the case of a vortex ring $\overline\xi$ in Definition~\ref{def: 3D ring} (for the case of a vortex dipole $\overline\omg$ can be proved similarly). As $\overline\xi$ is in $L^\ift(\bbR^3;\bbR)$ and has a compact support, the corresponding velocity $\overline\bfv(\bfx):=\bfv(0,\bfx)$ vanishes as $|\bfx|\to\ift$, where $\bfv:[0,\ift)\times\bbR^2\to\bbR^2$ is the 2D velocity generated by the traveling vortex ring $\xi(t,(z,r)):=\overline\xi(z-Wt,r)$ (e.g., see \cite[Lemma 1.1]{FT1981}). We will see that
there exists $R\gg 1$ such that
any bounded open set $\Omg\subset\bbR^3$ satisfying (i)-Definition~\ref{def: 2D atmosphere} must be contained in the bounded set $\set{(r,\tht,z)\in\bbR^3:r+|z|<R}$. In the remainder of the proof, we will use the usual cylindrical coordinate system of $\bbR^3$.

\medskip
We suppose the contrary, i.e., for each $n\geq1$, we suppose that there exists an axisymmetric bounded open set $\Omg_n\subset\bbR^3$ satisfying (i)-Definition~\ref{def: 3D atmosphere} and $$\Omg_n':=\Omg_n\cap\set{(r,\tht,z)\in\bbR^3:r+|z|>n}\neq\emptyset.$$ 
We take and fix some 
$n>2$ which is large enough such that $|\bfv(0,\cdot)|<W/10$ in the set $\set{(r,z,\tht)\in\bbR^3:r+|z|>n-2}$. Since $\Omg_n'$ is open, bounded, and axisymmetric, there exists a point $\bfx=(r,\tht,z)=\overline{\Omg_n'}$ satisfying $$\mbox{[Case I]:}\q\q
z=\inf\set{z'\in\bbR:(r',\tht',z')\in\overline{\Omg_n'}}<0\qd\mbox{or}\qd 
\mbox{[Case II]:}\q\q
z=\sup\set{z'\in\bbR:(r',\tht',z')\in\overline{\Omg_n'}}>0.
$$ For [Case I], let $\Phi_3=(\Phi_3^r,\Phi_3^\tht,\Phi_3^z)$ be the 3D flow map given by the ODE
$$\left\{\begin{aligned}
&\q\f{\rd}{\rd t}\Phi_3(t,\bfy)=
\bfv(t,\Phi_3(t,\bfy))\qd\mbox{for}\q t\geq0\\
&\q \Phi_3(0,\bfy)=\bfy\in\bbR^3.
\end{aligned}\right.$$ 
We observe that the function $t\mapsto \bfv(t,\Phi_3(t,\bfx))$ is continuous in $t\geq0$ and that $|\bfv(0,\Phi_3(0,\bfx))|=|\bfv(0,\bfx)|<W/10$. By continuity, there exists a constant $T>0$ such that $|\bfv(t,\Phi_3(t,\bfx))|<W/10$ for each $t\in[0,T]$. Hence we get $$|\Phi_3(t,\bfx)-\bfx|<\f{W}{10}t\qd\mbox{for each}\q t\in[0,T].$$ On the other hand, (i)-Definition~\ref{def: 3D atmosphere} says that $\Phi_3(t,\bfx)-(0,0,Wt)\in \overline{\Omg_n}$ for any $t\geq0$, in which two cases are possible: 
$$\mbox{[Case I-(1)]:}\q\q \Phi_3(t,\bfx)-(0,0,Wt)\in \overline{\Omg_n'}\qd\mbox{or}\qd\mbox{[Case I-(2)]}:\Phi_3(t,\bfx)-(0,0,Wt)\in \overline{\Omg_n}\setminus \overline{\Omg_n'}.$$
For [Case I-(1)], by the initial assumption on $\bfx=(r,\tht,z)$, we observe that \begin{equation}\label{eq:0901}
\Phi_3^z(t,\bfx)-Wt\geq z.
\end{equation} However, for each $t\in[0,T]$, we have 
$$z\geq \Phi_3^z(t,\bfx)-|\Phi_3^z(t,\bfx)-z|>\Phi_3^z(t,\bfx)-\f{W}{10}t,$$ which contradicts to the inequality \eqref{eq:0901}. For [Case I-(2)], we observe that 
$$\begin{aligned}
n&\geq \Phi_3^r(t,\bfx)+|\Phi^z_3(t,\bfx)-Wt|\\
&\geq r+|z-Wt|-2|\Phi_3(t,\bfx)-\bfx| \geq r+|z-Wt|-\f{Wt}{5}\\
&= r-z+Wt-\f{Wt}{5} > r-z
\end{aligned}$$ which means $r+|z|=r-z<n$. Note that it contradicts our initial assumption $\bfx=(r,\tht,z)\in\overline{\Omg_n'}$. In sum, [Case I] does not occur. 

\medskip
For [Case II], let $C_0\in(0,\ift)$ be any constant satisfying $$\sup_{t\geq0}\nrm{\bfv(t,\cdot)}_{L^\ift(\bbR^3)}<C_0,$$ for which one can apply the estimate (1.8) in \cite{LJ2025}:
$\nrm{\bfv}_{L^\ift}\lesssim \nrm{\overline\xi}_{L^\ift(\bbR^3)}^{1/2}\nrm{\overline\xi}_{L^1(\bbR^3)}^{1/4}\nrm{r^2\overline\xi}_{L^1(\bbR^3)}^{1/4}<\ift$. For the point $\bfx=(r,\tht,z)\in\overline{\Omg_n'}$ in [Case II], there exists the starting point $\bfx'=(r',\tht',z')\in\overline{\Omg_n}$ such that $$\Phi_3(\f{1}{C_0},\bfx')=\bfx+(0,0,\f{W}{C_0})$$ by (i)-Definition~\ref{def: 3D atmosphere}. For each $t\in[0,\f{1}{C_0}]$, we have 
$$\left|\Phi_3(t,\bfx')-[\bfx+(0,0,\f{W}{C_0})]\right|\leq \sup_{t\geq0}\nrm{\bfv(t,\cdot)}_{L^\ift(\bbR^3)}\cdot(\f{1}{C_0}-t)\leq C_0\cdot\f{1}{C_0}=1$$ and hence 
$$\begin{aligned}
\Phi^r_3(t,\bfx')+|\Phi^z_3(t,\bfx')|&\geq r+z+\f{W}{C_0}-2\left|\Phi_3(t,\bfx')-[\bfx+(0,0,\f{W}{C_0})]\right|\ \geq r+z-2 = r+|z|-2 > n-2, 
\end{aligned}$$ which implies that $\Phi_3(t,\bfx')\in\set{(r,z,\tht)\in\bbR^3:r+|z|>n-2}.$ Thus we get $|\bfv(t,\Phi_3(t,\bfx'))|<W/10$ for each $t\in[0,\f{1}{C_0}]$, which leads to 
\begin{equation}\label{eq0901_1}
\left|\bfx'-[\bfx+(0,0,\f{W}{C_0})]\right|=\left|\bfx'-\Phi_3(\f{1}{C_0},\bfx')\right|\leq 
\f{W}{10C_0}   
\end{equation} 
and therefore 
$$z'\geq z+\f{W}{C_0}-\left|
[\bfx'+(0,0,\f{W}{C_0})]\right|\geq 
z+\f{W}{C_0}-\f{W}{10C_0}>z.$$ From the initial definition on $\bfx=(r,\tht,z)$, the above relation $z'>z$ implies that $\bfx'=(r',\tht',z')\notin\overline{\Omg'_n}$ and so $r'+|z'|\leq n.$ However, by our previous observation \eqref{eq0901_1}, we get 
$$r'+|z'|\geq r+z+\f{W}{C_0}-2\left|\bfx'-[\bfx+(0,0,\f{W}{C_0})]\right|\geq r+z+\f{4W}{5C_0}=r+|z|+\f{4W}{5C_0}>n$$ which is a contradiction. It means that [Case II] does not occur. It completes the proof of Proposition~\ref{prop: existence}.
\end{proof}
By the above proposition(Proposition \ref{prop: existence}), the vortex atmosphere exists and is unique (up to a set of measure zero).
\subsection{Streamline}\label{sec: streamline}\q 

\medskip

Instead of treating a vortex ring or a vortex dipole as a traveling solution, we may view it as a stationary solution in the reference frame translating with constant speed $W>0$. In this moving frame, the closure of the vortex domain (core + atmosphere) can be characterized as the set of all bounded streamlines, or equivalently, as the collection of fluid particles whose trajectories remain bounded.

\medskip
For a vortex ring $\overline\xi$ and a point $(z',r')$, the streamline containing the point is defined as the 
 image in the $(z,r)$-plane $\Pi:=\{(z,r)\in\bbR^2:\,r>0\}$ of the trajectory map $\Phi_3(t,(z',r'))$ in the moving frame containing the point:
$$\left\{\Phi_3(t,(z',r'))-Wt\bfe_z\in\Pi\,\,:\,\,t\in\mathbb{R}\right\}\subset \Pi,$$ where we consider
the 3D axisymmetric flow map $\Phi_3=(\Phi_3^z,\Phi_3^r)$ generated by 
$\xi(t,(z,r)):=\overline\xi(z-Wt,r)$. This streamline is everywhere parallel to the local velocity in the moving frame, which is $\overline\bfv-W\bfe_z:=\bfv(0,\cdot)-W\bfe_z$, where $\bfv:[0,\ift)\times\bbR^3\to\bbR^3$ is the 3D velocity corresponding to the traveling vortex ring $\overline\xi(\cdot-Wt\bfe_z)$ with the unit vector $\bfe_z=(0,0,1)$ of the $z$-axis in the usual cylindrical coordinate system. That is, each streamline in the $(z,r)$-plane $\Pi$ is contained in some level set 
$$\Gmm_3(\gmm'):=\{(z,r)\in\Pi:\psi[\overline\xi]-Wr^2/2=\gmm'\}\qd\mbox{for some}\q\gmm'\in\bbR$$ where 
the 3D axisymmetric stream function $\psi=\psi[\overline\xi]$ satisfies the relation $$\overline\bfv=\overline{v}^z\bfe_z+\overline{v}^r\bfe_r, \qd \overline{v}^z=\f{\rd_r\psi[\overline\xi]}{r},\qd\overline{v}^r=\f{-\rd_z\psi[\overline\xi]}{r}$$ with the unit vector $\bfe_r=(\cos\tht,\sin\tht,0)$ of the $r$-axis in the cylindrical system (e.g., see \cite[Chapter 1]{FT1981} for a background).

\medskip
Similarly, for a vortex dipole $\overline\omg$, the velocity is given as $\overline\bfu-(W,0):=\bfu(0,\cdot)-(W,0)$ where $\bfu:[0,\ift)\times\bbR^2\to\bbR^2$ is the 2D velocity corresponding to the traveling vortex dipole $\overline\omg(\cdot-(Wt,0))$. 
Hence, the following level set 
$$\Gmm_2(\gmm'):=\{\bfx\in\bbR^2:x_2>0,\,\calG[\overline\omg]-Wx_2=\gmm'\},\qd\mbox{for}\q\gmm'\in\bbR$$ gives a streamline in the half plane $\set{\bfx\in\bbR^2:x_2>0}$, where the 2D stream function $\calG[\overline\omg]$ satisfies the relation
$$\overline\bfu=\left(\rd_{x_2}\calG[\overline\omg],-\rd_{x_1}\calG[\overline\omg]\right)$$ (e.g., see \cite[Chapter 1.2]{AC2022} for a background).

\medskip
To characterize (the closure of) a vortex domain as the collection of bounded streamlines, it is crucial to verify the boundedness of a level set containing a certain streamline. To this end, we observe the decays of $\psi[\overline\xi]/r^2$ and $\calG[\overline\omg]/x_2$ as $|\bfx|\to\ift$ in Proposition~\ref{prop: decay of stream/term} below. Its proof will be given in Appendix~\ref{appendix}.

\begin{prop}\label{prop: decay of stream/term}
For any axisymmetric $\overline\xi\in L^\ift(\bbR^3)$ having a compact support, its 3D axisymmetric stream function $\psi[\overline\xi]$ satisfies 
$$\sup_{|(z,r)|\geq R}\f{|\psi[\overline\xi](z,r)|}{r^2}\to0\qd\mbox{as}\q R\to\ift$$under the cylindrical coordinate system $(r,\tht,z)$ of $\bbR^3$.  Similarly, for any odd-symmetric $\overline\omg\in L^\ift(\bbR^2)$ having a compact support, its stream function $\calG[\overline\omg]$ satisfies 
$$\sup_{|(x_1,x_2)|\geq R}
\left|\f{\calG[\overline\omg](x_1,x_2)}{
x_2}\right|\to0
\qd\mbox{as}\q R\to\ift.$$
\end{prop}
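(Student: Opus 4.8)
The plan is to estimate the two stream functions directly from their Biot--Savart representations, using only that the vorticity is bounded with compact support. First I would fix $a>0$ with $\supp\overline\xi\subset\{|\bfx|\le a\}$ (resp.\ $\supp\overline\omg\subset\{|\bfx|\le a\}$) and take $R>2a$. The two cases behave quite differently: in 2D the bound falls out of the odd reflection in essentially one line, whereas in 3D the decay of $\psi[\overline\xi]$ itself is too weak near the symmetry axis, and supplying the argument there is the only real difficulty.

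For the dipole, I would use the odd symmetry of $\overline\omg$ to fold the Newtonian convolution $\calG[\overline\omg]=(-\lap_{\bbR^2})^{-1}\overline\omg$ onto the upper half-plane, which gives, for $x_2>0$,
\[
\calG[\overline\omg](x_1,x_2)=\f{1}{4\pi}\int_{x_2'>0}\log\f{(x_1-x_1')^2+(x_2+x_2')^2}{(x_1-x_1')^2+(x_2-x_2')^2}\,\overline\omg(\bfx')\,\dd\bfx'.
\]
For $x_2,x_2'>0$ the integrand equals $\log\big(1+4x_2x_2'/|\bfx-\bfx'|^2\big)$ and is therefore bounded in absolute value by $4x_2x_2'/|\bfx-\bfx'|^2$, so $|\calG[\overline\omg](x_1,x_2)|/x_2\le \pi^{-1}\int_{x_2'>0}x_2'\,|\overline\omg(\bfx')|\,|\bfx-\bfx'|^{-2}\,\dd\bfx'$. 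When $|\bfx|\ge R$ one has $|\bfx-\bfx'|\ge R-a\ge R/2$ on $\supp\overline\omg$, so the right-hand side is at most $\f{4}{\pi R^2}\int_{x_2'>0}x_2'\,|\overline\omg(\bfx')|\,\dd\bfx'\to 0$ uniformly in such $\bfx$; the case $x_2\le 0$ follows from the identity $\calG[\overline\omg](x_1,x_2)=-\calG[\overline\omg](x_1,-x_2)$ together with continuity at $x_2=0$. This settles the dipole assertion.

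For the ring, I would write $\psi[\overline\xi]=rA^\tht$ with $A^\tht:=\bfA\cdot\bfe_\tht$, $\bfA:=(-\lap_{\bbR^3})^{-1}\bfomg$, and $\bfomg=\omg^\tht\bfe_\tht$, $\omg^\tht=r\overline\xi$; by axisymmetry $\bfA=A^\tht\bfe_\tht$, the Newtonian kernel gives $|A^\tht(\bfx)|\le\f{1}{4\pi}\int|\omg^\tht(\bfx')|\,|\bfx-\bfx'|^{-1}\,\dd\bfx'$, and $\psi[\overline\xi]$ vanishes on the symmetry axis (by the rotational symmetry of the kernel). I would then split $\{|(z,r)|\ge R\}$ into $\{r\ge M\}$ and $\{r\le M\}$, with $M>a$ a fixed constant. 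On $\{r\ge M\}$ the crude bound above gives $|\psi[\overline\xi](z,r)|/r^2=|A^\tht(z,r)|/r\le (4\pi r)^{-1}\|\omg^\tht\|_{L^1}/(R-a)\le \|\omg^\tht\|_{L^1}/\big(4\pi M(R-a)\big)\to 0$. On $\{r\le M\}$ the constraint $|(z,r)|\ge R$ forces $|z|\ge\sqrt{R^2-M^2}$, so every point with cylindrical radius $\rho\in[0,M]$ and height $z$ lies at Euclidean distance $\ge|z|\ge\sqrt{R^2-M^2}$ from the origin; using $\rd_r\psi[\overline\xi]=r\,\overline{v}^z$ and integrating from the axis,
\[
\f{|\psi[\overline\xi](z,r)|}{r^2}=\f{1}{r^2}\Big|\int_0^r\rho\,\overline{v}^z(z,\rho)\,\dd\rho\Big|\le\f{1}{2}\sup_{0\le\rho\le M}|\overline\bfv(z,\rho)|\le\f{1}{2}\sup_{|\bfx|\ge\sqrt{R^2-M^2}}|\overline\bfv(\bfx)|,
\]
which tends to $0$ as $R\to\ift$ by the decay of $\overline\bfv$ recalled earlier (\cite[Lemma~1.1]{FT1981}). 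Combining the two regions yields the ring assertion.

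The one delicate step is this near-axis estimate in the ring case: the crude bound $|\psi[\overline\xi]|\lesssim r/\dist(\bfx,\supp\overline\xi)$ does not see the quadratic vanishing of $\psi[\overline\xi]$ on the axis and becomes useless as $r\to 0$ while $|z|\to\ift$, which is exactly why one integrates $\rd_r\psi[\overline\xi]=r\,\overline{v}^z$ from $r=0$ and invokes the decay of the \emph{full} relative velocity $\overline\bfv$ rather than of $\psi[\overline\xi]$ itself. (An alternative is to work with the explicit axisymmetric Green's function, whose sharp far-field bound $\lesssim (rr')^2\big((z-z')^2+(r+r')^2\big)^{-3/2}$ yields $|\psi[\overline\xi](z,r)|/r^2\lesssim R^{-3}\int (r')^2|\omg^\tht(\bfx')|\,\dd\bfx'$ in one step; the route above avoids elliptic integrals.)
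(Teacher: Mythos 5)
Your proof is correct, but it follows a genuinely different route from the paper's. For the 2D part, the paper quotes the bound $|\calG[\overline\omg]|\lesssim |x_2|^{1/2}$ from \cite[Proposition 2.1]{AC2022}, combines it with the mean value theorem in $x_2$ (using $\calG(x_1,0)=0$) and the gradient decay $|\nb\calG|=O(|\bfx|^{-1})$, and splits $\{|\bfx|\ge R\}$ according to whether $|x_1|$ or $|x_2|$ exceeds $R/\sqrt2$; you instead fold the Newtonian kernel by the odd reflection and use $0\le\log(1+t)\le t$, which gives the clean uniform bound $|\calG|/x_2\lesssim R^{-2}$ in one stroke, is fully self-contained, and is in fact quantitatively stronger than what the paper's argument yields. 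For the 3D part, the paper again splits at $r,|z|\ge R/\sqrt2$ and uses both estimates of \cite[Lemma 1.1]{FT1981} ($|\psi/r|=O(d^{-2})$ away from the axis, and $|\nb\psi/r|=O(d^{-3})$ together with the mean value theorem from the axis); your decomposition into $\{r\ge M\}$ and $\{r\le M\}$ with $M$ a fixed constant replaces the first ingredient by the elementary vector-potential bound $|\psi|/r=|A^\tht|\le \nrm{\omg^\tht}_{L^1}/(4\pi\,\dist(\bfx,\supp\overline\xi))$, while near the axis your identity $\psi(z,r)=\int_0^r\rho\,\overline{v}^z(z,\rho)\,\dd\rho$ plays exactly the role of the paper's mean value theorem step, with the same external input (decay of $\overline\bfv=\nb\psi/r$, which could even be obtained directly from the Biot--Savart kernel, making the 3D part self-contained as well). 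Two harmless caveats: your parenthetical \say{sharp far-field bound} $G\lesssim (rr')^2\left((z-z')^2+(r+r')^2\right)^{-3/2}$ is only valid in the far-field regime (it fails near the diagonal), but you use it only as an aside; and at $x_2=0$ the quotient $\calG/x_2$ is a $0/0$ expression that both you and the paper treat implicitly, which is immaterial since $\calG$ vanishes there and your bound passes to the limit.
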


\noindent By Proposition~\ref{prop: decay of stream/term} above, we have the following conclusions:
\begin{itemize}
\item For the case of $\gmm'>0$, the level sets $\Gmm_3(\gmm')$ and $\Gmm_2(\gmm')$ are bounded. Hence, every streamline in those level sets is bounded. 
\item The superlevel sets $\set{\psi[\overline\xi]-Wr^2/2>0}$ and $\set{\calG[\overline\omg]-Wx_2>0}\cap\set{x_2>0}$ are bounded. Note that they satisfy the conditions (i)-Definition~\ref{def: 3D atmosphere} and (i)-Definition~\ref{def: 2D atmosphere}, respectively. 
\item If $\gmm'<0$, we note that the level sets $\Gmm_3(\gmm')$ and $\Gmm_2(\gmm')$ are unbounded, so 
they 
$\cup_{\gmm'<0}\Gmm_3(\gmm')$ and $\cup_{\gmm'<0}\Gmm_2(\gmm')$
contain all possible unbounded streamlines. 
\end{itemize}

\noindent One might think that the bounded open sets $\set{\psi[\overline\xi]-Wr^2/2>0}$ and $\set{\calG[\overline\omg]-Wx_2>0}\cap\set{x_2>0}$ charaterize the vortex domains. It is true in many cases, but for some traveling solutions, an unbounded level set for some $\gmm'<0$ might contain more than two streamlines and one of them is  \textit{bounded} (see Figure~\ref{fig:pointring}). See Section~\ref{subsec: classif. of atmos.} for a detailed discussion of this observation.

\begin{figure}
\centering
\begin{subfigure}{0.45\textwidth}
\includegraphics[width=\linewidth]{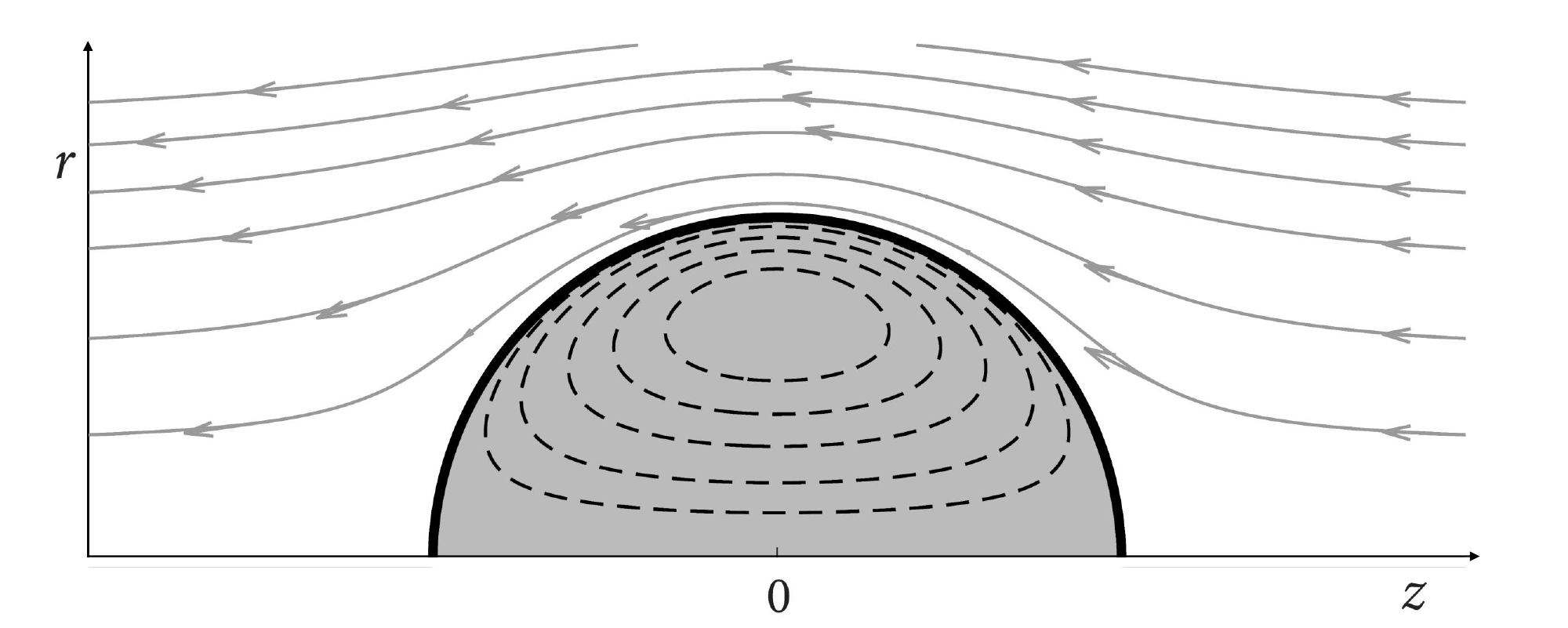}
\caption{Streamlines of Hill's spherical vortex in $\Pi$.}
\label{fig:Hillsvortex}
\end{subfigure}
\begin{subfigure}{0.45\textwidth}
\includegraphics[width=\linewidth]{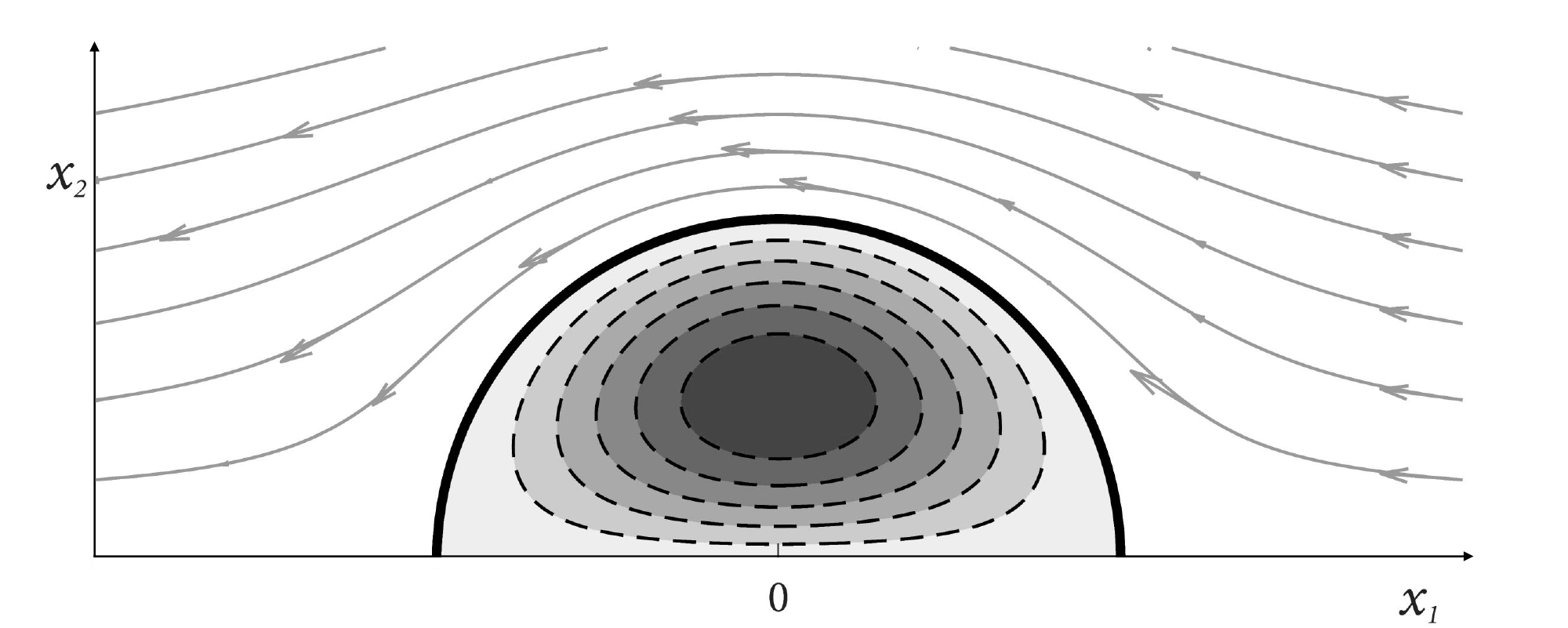}
\caption{Streamlines of Chaplygin--Lamb dipole in $\set{x_2>0}$.}
\label{fig:Lambdipole}
\end{subfigure}

\caption{
The figures above depict traveling vortices with empty atmospheres. The color shading indicates the (relative) vorticity values (darker means larger). The vortex domains coincide with the vortex cores, shown as the bounded regions enclosed by solid black curves. Dashed curves denote bounded streamlines, while arrows mark unbounded ones.
}
\label{fig1}
\end{figure}

\begin{figure}
\centering
\begin{subfigure}[t]{0.45\textwidth}
\includegraphics[width=\linewidth]{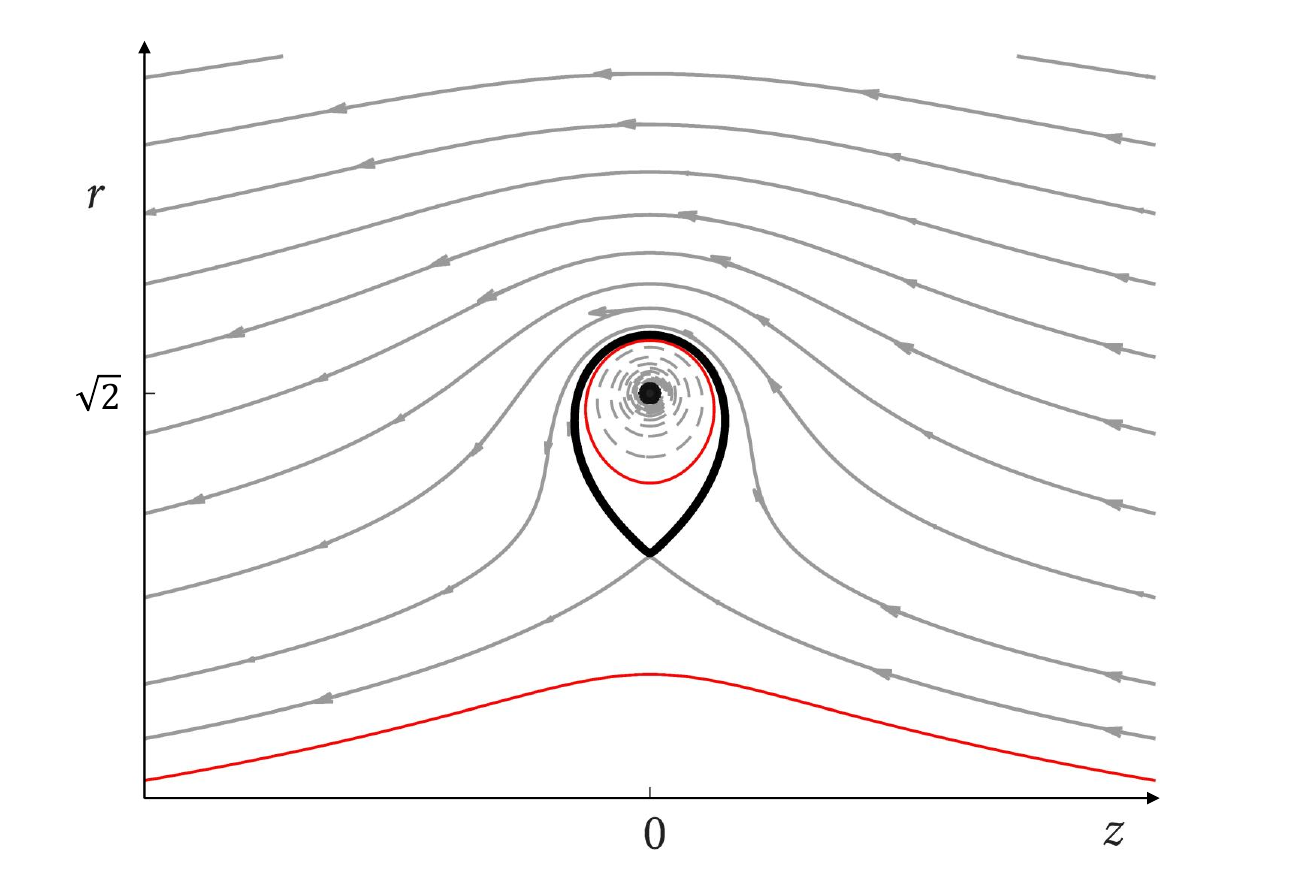}
\caption{Streamlines of a desingularized point-vortex ring plotted near $(0,\sqrt{2})\in\Pi$.  The red lines show a level set $\Gmm_3(\gmm')$ for some $\gmm'<0$ given by a union of a closed orbit and an unbounded streamline.
}
\label{fig:pointring}
\end{subfigure}
\begin{subfigure}[t]{0.45\textwidth}
\includegraphics[width=\linewidth]{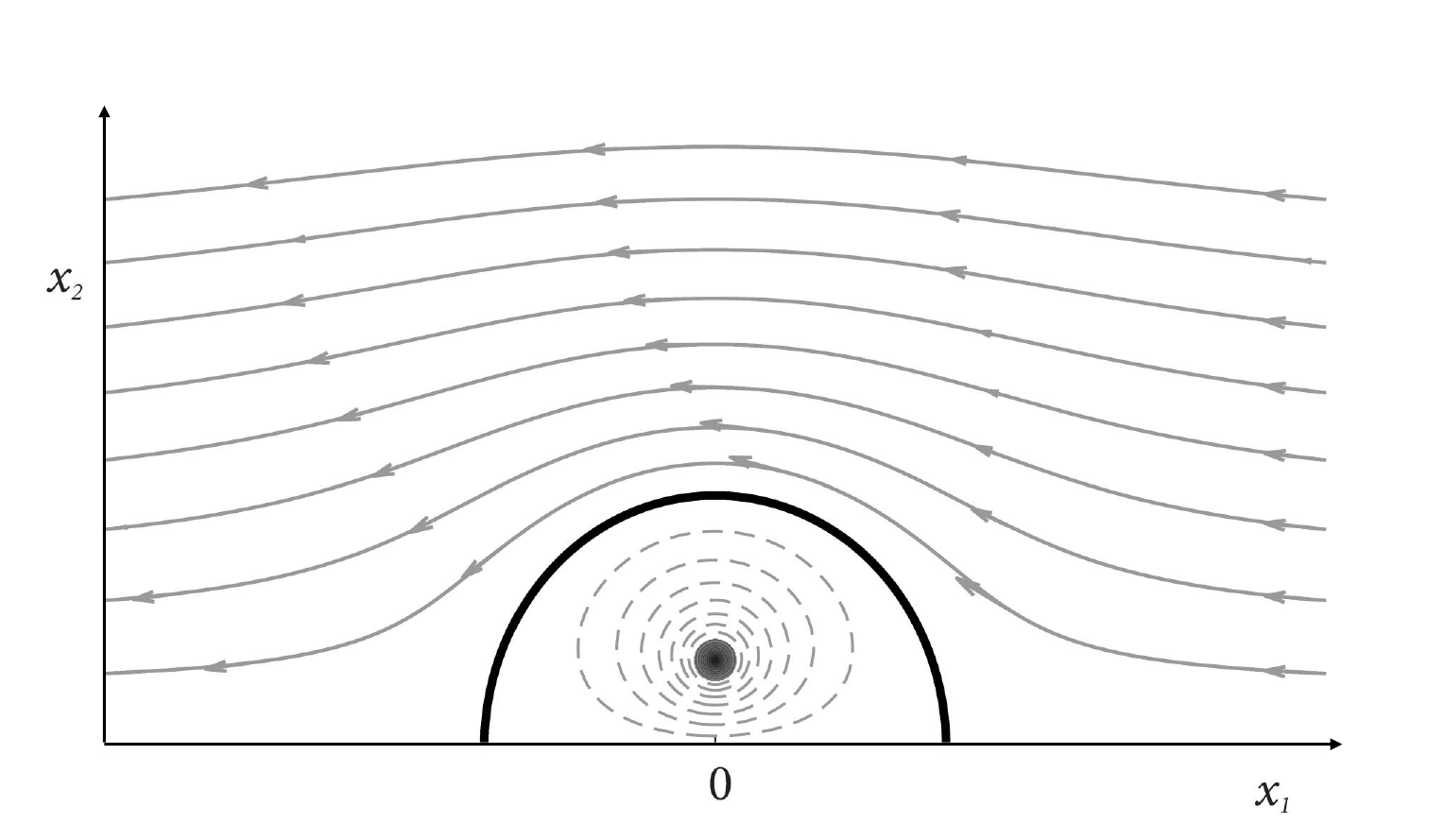}
\caption{Streamlines of  a point-vortex dipole in $\set{x_2>0}$.}
\label{fig:pointdipole}
\end{subfigure}
\caption{Examples of traveling solutions having nonempty atmospheres, especially the point-vortex cases, are shown. The supports of vortices, expressed by black dots, are concentrated around each point-vortex in $\Pi,\set{x_2>0}$ respectively, while the vortex domains (enclosed by black solid lines) surround the supports. In each figure, the atmosphere is the region between the support and the black solid line surrounding it.
}
\label{fig2}
\end{figure}

\subsection{Empty / non-empty atmosphere}\label{sec: nonempty atmos.}\q

\medskip
For cases in which the atmosphere is empty--that is, when the vortex domain coincides with its core--the Chaplygin–Lamb dipole \cite{Chap1903, Lamb1993} in 2D and Hill’s spherical vortex \cite{Hill1894} in 3D provide classical examples; see Figure~\ref{fig1}. An equivalent condition for a vortex dipole to have an empty atmosphere will be discussed in Section~\ref{sec: Sadovskii}.

\medskip

By contrast, consider a vortex ring $\overline\xi$ whose core is given by $\set{\psi[\overline\xi]-Wr^2/2-\gmm>0}$ in $\Pi$ or a vortex dipole $\overline\omg$ with its core $\set{\calG[\overline\omg]-Wx_2-\gmm>0}$ in $\set{x_2>0}$ for some \textit{positive} constant $\gmm>0$.  In either case, the vortex necessarily possesses a nonempty atmosphere, since there are   \textit{bounded} streamlines $\Gmm_3(\gmm'),\Gmm_2(\gmm')$ with $0<\gmm'<\gmm$ surrounding the core. Highly concentrated vortices, obtained from the desingularization of a point-vortex dipole and a point-vortex ring (see, e.g., \cite{FT1981, Turkington1983} and numerous related works), form typical examples for them; see Figure~\ref{fig2}. 
\medskip

Furthermore, each of Norbury’s steady vortex rings \cite{Norbury1973} also has a nonempty atmosphere: indeed, the vortex cores shown in \cite[Figure 3]{Norbury1973} are strictly contained within the vortex domains depicted in \cite[Figure 4]{Norbury1973}.

\section{Comparison of ring with dipole}\label{sec: compare ring and dipole}

We will prove that although the two-dimensional half-plane $\set{\bfx\in\bbR^2:x_2>0}$ and the  meridional $(z,r)$-plane $\Pi$ in three dimensions share certain structural properties, they nevertheless differ in several essential topological 
features of their vortex domains:

\medskip
\textit{In the plane  $\bbR^2$, the vortex domain of every vortex dipole must intersect the symmetry axis (as shown in Figure~\ref{fig:Lambdipole} and Figure~\ref{fig:pointdipole}). In contrast, in the axisymmetric setting of $\bbR^3$, the vortex domain of a vortex ring may be formulated away from the $z$-axis (as shown in Figure~\ref{fig:pointring}).}

\medskip

In other words, while a 2D dipole does not permit fluid particles to pass between its two poles, a 3D vortex ring may either block such particles or allow them to pass through its central hole.

\subsection{Steiner symmetry}\q

\medskip
Before we compare a ring with a dipole, we introduce the definition of Steiner symmetry that will be frequently used throughout the remainder of the paper.

\begin{defn}(2D Steiner symmetry about $x_2$-axis)\label{def: 2D steiner}
We say a function $\omg:\set{\bfx\in\bbR^2:x_2>0}\to[0,\ift)$ has the Steiner symmetry if, for each $x_2>0$, we have 
$\omg(x_1,x_2)=\omg(-x_1,x_2)$ for any  $x_1\geq0$ and that
$$x_1\mapsto\omg(x_1,x_2)\q\mbox{is non-increasing  in}\q x_1\geq0.$$ 
We say the symmetry is strict if the map is strictly decreasing.
Similarly, we say a set $A\subset\set{\bfx\in\bbR^2:x_2>0}$ has the Steiner symmetry if the characteristic function $\bfone_A:\set{\bfx\in\bbR^2:x_2>0}\to[0,\ift)$ has the Steiner symmetry. 
\end{defn}

\begin{defn}(3D Steiner symmetry about $z$-axis)\label{def: 3D steiner}
We say an axisymmetric function $\xi:\bbR^3\to[0,\ift)$ has the   Steiner symmetry if, for each $r>0$, we have 
$\xi(z,r)=\xi(-z,r)$ for any $z\geq0$ and that 
$$ z\mapsto \xi(z,r)\q\mbox{is non-increasing   in}\q z\geq0.$$
We say the symmetry is strict if the map is strictly decreasing.
Similarly, we say an axisymmetric set $A\subset\bbR^3$ has the Steiner symmetry if the characteristic function $\bfone_A:\bbR^3\to[0,\ift)$ has the Steiner symmetry. 
\end{defn}

\begin{rmk}\label{rmk: Steiner is natural} 
It is natural, from a physical viewpoint, to expect a traveling vortex (ring or dipole) to possess Steiner symmetry (Definitions~\ref{def: 2D steiner} and \ref{def: 3D steiner}). Many steady vortices that are known to have mathematical stability arise as maximizers of kinetic energy under suitable constraints; hence variational methods have been widely employed in their analysis (see, e.g., \cite{Arn1966, Ben1976, FT1981, Turkington1983, Burton1988, Bur1987, CQZZ2023, GS2024, BNL2013, AC2022, Wang2024, ACJ2025, Choi2024, CSW2025, CJS2025, AF1986}). In these variational settings, Steiner symmetry emerges as a necessary condition for energy maximization (see, for example, \cite[Theorem 3.9]{LL}).

\medskip

However, not all steady vortices arise from such variational principles. For instance, D. Cao et al.~\cite[Theorem 1.4]{CQZZ2021} constructed a two-dimensional vortex dipole whose  core\footnote{Nevertheless, the core of the example is simply connected.} is not Steiner symmetric, obtained via a bifurcation from the point-vortex dipole using an implicit function theorem applied to contour dynamics. Although the stability of this non-Steiner-symmetric dipole remains unknown, bifurcation methods of this type have proved effective in producing steady vortices with interesting topological structures. For a related discussion, including the existence of unequal-sized (asymmetric) vortex pairs, see \cite[Section~1]{HH2021}. 

\end{rmk} 

\subsection{Various types of vortex domain}\q

\medskip
For the rest of the paper, we only consider vortex rings or dipoles having the Steiner symmetry introduced in the previous subsection.

\subsubsection{Vortex domain of dipole}\q

\medskip
For any vortex dipole with the Steiner symmetry, we confirm that the vortex domain is 
touching the symmetry $x_1$-axis
 even when the core is not.\footnote{cf. When a core is touching the axis, we call it Sadovskii.   See \cite{CJS2025, CSW2025}.}

\begin{thm}\label{thm: 2D atmosphere}
For any vortex dipole $\overline\omg$ (Definition~\ref{def: 2d dipole}) having the Steiner symmetry (Definition~\ref{def: 2D steiner}) in the half plane $\set{\bfx\in\bbR^2:x_2>0}$, the corresponding vortex domain contains a disk
$\{\bfx\in\bbR^2:\,|\bfx|<R\}$ for some $R>0.$
\end{thm}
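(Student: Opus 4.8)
The plan is to realize the vortex domain of a Steiner-symmetric dipole as the bounded superlevel set $\{\calG[\overline\omg]-Wx_2>0\}\cap\{x_2>0\}$ (together with its reflection across the $x_1$-axis), and then show that this set contains an entire segment of the $x_1$-axis in its closure, from which a genuine disk follows by openness. The crucial quantitative input, alluded to in the introduction's discussion of Figure~\ref{fig: dipole center}, is that the \emph{center speed exceeds $2W$}: more precisely, I claim that along the $x_1$-axis the horizontal velocity $\overline u_1 = \partial_{x_2}\calG[\overline\omg]$ satisfies $\overline u_1 > 2W$ at every point where the core's projection is crossed. Granting this, the function $h(x_1):=\lim_{x_2\to 0^+}\calG[\overline\omg](x_1,x_2)/x_2 = \partial_{x_2}\calG[\overline\omg](x_1,0)$ (using oddness of $\calG$ in $x_2$, so $\calG$ vanishes on the axis) is continuous, and the condition $\calG[\overline\omg]-Wx_2>0$ for small $x_2>0$ near the axis reduces to $h(x_1)>W$. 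Since $h(x_1)=\overline u_1(x_1,0)>2W>W$ on a whole interval of $x_1$-values and $h$ is continuous, this interval lies in the (closure of the) vortex domain, and openness of the vortex domain promotes a neighborhood of an interior point of that interval to a disk $\{|\bfx-\bfx_0|<R\}$; after translating coordinates (or invoking symmetry to center at the origin) we get the stated disk.

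The main work, therefore, is establishing $\overline u_1(x_1,0)>2W$ on the relevant interval. The idea is to split $\calG[\overline\omg] = \calG^+ + \calG^-$ into the contributions of the positive part of $\overline\omg$ (supported in $\{x_2>0\}$) and its odd reflection (the negative part in $\{x_2<0\}$); by the Biot--Savart kernel's monotonicity, on the axis $\{x_2=0\}$ the vertical derivative $\partial_{x_2}$ of each piece contributes with the same sign, so $\overline u_1(x_1,0)$ is the sum of two strictly positive ``half-plane induced speeds.'' The traveling-wave relation from Definition~\ref{def: 2d dipole}(ii) says that on the core $\overline\omg$ is transported by $\overline\bfu-(W,0)$, i.e.\ $\calG[\overline\omg]-Wx_2$ is constant on each connected piece of $\{\overline\omg\neq0\}$; combined with Steiner symmetry this forces the core to sit ``below'' the axis-region in the appropriate sense and lets me compare $\overline u_1$ at an axis point with $2W$ by a reflection/monotonicity argument on the half-plane stream function, much as one shows the induced speed at a dipole's midpoint is exactly the superposition of two equal single-vortex contributions each of which alone would already move a particle forward. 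I expect the Steiner monotonicity (Definition~\ref{def: 2D steiner}) to be exactly what rules out cancellation and pins the inequality in the strict direction.

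The remaining steps are comparatively routine: (a) Proposition~\ref{prop: decay of stream/term} guarantees $\{\calG[\overline\omg]-Wx_2>0\}\cap\{x_2>0\}$ is bounded; (b) one checks it satisfies condition (i) of Definition~\ref{def: 2D atmosphere} (streamlines of the relative flow stay on level sets of $\calG-Wx_2$, which are bounded for nonnegative values, as noted after Proposition~\ref{prop: decay of stream/term}) and hence is contained in the vortex domain, while the maximality clause (ii) together with Remark~\ref{rmk: unique} and Proposition~\ref{prop: existence} identifies the vortex domain with (the union of this set, its core, and reflections); (c) the vortex domain is open, so containing a segment of the axis in its closure, together with the interior point being strictly inside (since $h>W$ strictly there), yields the open disk. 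The single real obstacle is the strict inequality $\overline u_1(x_1,0)>2W$; everything else is bookkeeping around the superlevel-set characterization already set up in Section~\ref{sec: streamline}.
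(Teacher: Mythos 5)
Your skeleton is the same as the paper's: the bounded superlevel set $\set{\calG[\overline\omg]-Wx_2>0}\cap\set{x_2>0}$ satisfies condition (i) and is therefore contained in the vortex domain, $\calG[\overline\omg]$ vanishes on the $x_1$-axis by oddness, and so everything reduces to the strict inequality $\rd_{x_2}\calG[\overline\omg](0,0)=\overline u^1(0,0)>W$ (indeed $>2W$), after which continuity of $\nb\calG[\overline\omg]$ and the symmetries give the disk. But the step you yourself call ``the single real obstacle'' is genuinely missing, and your sketch of it does not work as written. The assertion that the traveling-wave relation of Definition~\ref{def: 2d dipole}(ii) makes $\calG[\overline\omg]-Wx_2$ constant on each connected component of $\set{\overline\omg\neq0}$ is false: the relation only says that $\overline\omg$ is transported by the relative flow, i.e.\ $\overline\omg$ is constant along level lines of $\calG[\overline\omg]-Wx_2$, while the relative stream function itself varies across the core (the Chaplygin--Lamb dipole already shows this). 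The rest of that paragraph (``I expect the Steiner monotonicity \dots pins the inequality'') is the right intuition --- it is exactly the heuristic of Figure~\ref{fig: dipole center} --- but it is not a proof: splitting into the two half-plane contributions and noting each is positive gives $\overline u^1(0,0)>0$, not $>2W$, and not even $>W$; the quantitative comparison of each contribution with $W$ is precisely the content of Proposition~\ref{prop: 2D central speed}, whose proof is a nontrivial reflection/monotonicity argument carried out in \cite[Lemma 4.5]{CJS2025} and \cite[Proposition 4.20]{CSW2025}. Your stronger claim that $\overline u^1(\cdot,0)>2W$ on the whole interval over the core's projection is neither needed nor justified; only the value at the origin is required, and since the theorem asks for a disk centered at the origin, ``translating coordinates'' is not an option --- you need the origin itself, which is exactly why the center-speed proposition is stated there.

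Two smaller slips. First, from a segment of the $x_1$-axis lying in the \emph{closure} of the open vortex domain you cannot conclude that a neighborhood of its points lies in the domain: closure points of an open set need not be interior. To capture the axis one should show that the union of the upper superlevel set, its odd reflection, and the axis segment is a bounded open set still satisfying condition (i) of Definition~\ref{def: 2D atmosphere} (the axis is flow-invariant by odd symmetry, and the relative flow preserves the segment between the two axis stagnation points), and then invoke maximality; the paper's reduction to $\rd_{x_2}\calG[\overline\omg](0,0)>W$ is the clean way to package this. Second, your step (b) claims the maximality clause \emph{identifies} the vortex domain with the superlevel set (plus reflections); at this stage maximality only gives containment, and the identification is the content of Theorem~\ref{thm: class. of 2D atmos.}, which requires the additional simple-connectedness hypothesis on the core. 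Containment is all you need here, and all you should claim.
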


For the proof of Theorem~\ref{thm: 2D atmosphere} above, we establish a proposition asserting that the fluid speed at the dipole’s center exceeds twice the traveling speed. For earlier discussion concerning the conclusion of Theorem~\ref{thm: 2D atmosphere}, we refer the reader to \cite[Subsection~1.4]{CJS2025}.

\begin{prop}\label{prop: 2D central speed}
For every vortex dipole $\overline\omg$ (Definition~\ref{def: 2d dipole}) with a traveling speed $W>0$, if it has the Steiner symmetry (Definition~\ref{def: 2D steiner}) in the half plane $\set{\bfx\in\bbR^2:x_2>0}$, we have 
$$\overline u^1(0,0)>2W$$ where $\overline u^1$ is the first component of the velocity $\overline\bfu:=\nb^\perp\calG[\overline\omg]$.
\end{prop}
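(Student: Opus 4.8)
The plan is to compute $\overline u^1(0,0)$ directly from the Biot--Savart law and exploit the two symmetries of $\overline\omg$ — the defining odd symmetry in $x_2$ and the assumed Steiner (even + monotone) symmetry in $x_1$ — to rewrite the velocity integral as an integral over the first quadrant $\{x_1>0,\,x_2>0\}$ against a strictly positive kernel, and then compare with the known formula for $W$. Recall that in the moving frame the dipole is a steady solution, so the relative stream function $\calG[\overline\omg]-Wx_2$ is constant on the core boundary; this gives an integral identity relating $W$ to $\overline\omg$. Concretely, $\overline u^1(x_1,x_2)=\rd_{x_2}\calG[\overline\omg](x_1,x_2) = \frac{1}{2\pi}\int_{\bbR^2}\frac{(x_2-y_2)}{|x-y|^2}\,\overline\omg(y)\,\dd y$. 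Evaluating at the origin and using oddness in $y_2$ to fold the lower half-plane onto the upper one,
\begin{equation}
\overline u^1(0,0) = \frac{1}{2\pi}\int_{\{y_2>0\}} \left(\frac{-y_2}{y_1^2+y_2^2} - \frac{y_2}{y_1^2+y_2^2}\right)\overline\omg(y)\,\dd y = -\frac{1}{\pi}\int_{\{y_2>0\}}\frac{y_2}{y_1^2+y_2^2}\,\overline\omg(y)\,\dd y.
\end{equation}
Wait — the sign: with $\overline\omg\ge 0$ on $\{y_2>0\}$ this would be negative, so I must be careful with the orientation convention $\nb^\perp=(\rd_{x_2},-\rd_{x_1})$ and the sign of $(-\lap)^{-1}$; the physically correct statement is that a dipole with $\overline\omg\ge0$ above the axis travels in the $+x_1$ direction, and the center velocity points the same way, so after fixing conventions the integrand will be $+\frac{y_2}{y_1^2+y_2^2}\,\overline\omg(y)$ up to the overall constant. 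I will pin down the sign once at the start and carry it consistently.

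The key step is the comparison. The traveling speed $W$ admits an analogous representation: differentiating the steady-state relation, or more simply using the fact that $W$ equals the $x_1$-velocity that each "half" of the dipole must cancel, one obtains an expression for $2W$ as an average of a kernel $K(y)$ against $\overline\omg$ over $\{y_2>0\}$, where $K$ is obtained by symmetrizing the Biot--Savart kernel in $y_1$. The inequality $\overline u^1(0,0)>2W$ then reduces to a pointwise comparison of two kernels on the first quadrant: the "center kernel" $\frac{y_2}{y_1^2+y_2^2}$ against the symmetrized kernel coming from $2W$. The intuition from Figure~\ref{fig: dipole center} is that the origin is the unique point where the contributions of the two poles add most constructively, so the center kernel dominates. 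To make this rigorous I would write $2W$ as a suitable line integral or area integral (e.g., using that $\calG[\overline\omg]-Wx_2$ is constant on $\rd(\text{core})$ together with an explicit known identity for $W$, such as the one in \cite[Subsection~1.4]{CJS2025} or via the renormalized energy/impulse identities), expand both kernels, and verify the pointwise domination; the Steiner monotonicity in $x_1$ is what upgrades a non-strict inequality to a strict one, since it forces $\overline\omg$ to put genuine mass near $y_1=0$ where the gap between the kernels is largest.

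The main obstacle I anticipate is producing the right integral representation of $W$ to compare against — the Biot--Savart integral for $\overline u^1(0,0)$ is immediate, but $W$ is defined only implicitly through the steadiness of the flow, and turning it into a clean kernel integral over $\{y_2>0\}$ that can be compared pointwise with $\frac{y_2}{y_1^2+y_2^2}$ is the delicate part. One natural route: consider the relative velocity $\overline u^1 - W$, which vanishes somewhere on each vertical line through the core (since streamlines in the moving frame are closed around the core), so $W$ is a value attained by $\overline u^1$ on the core; since $\overline u^1$ is, by a maximum-principle / monotonicity argument on the harmonic-off-core function $\rd_{x_2}\calG$ (cf. the sign-of-derivatives analysis the paper announces in Section~\ref{subsec: classif. of atmos.}), maximized at the origin among a suitable family, one gets $\overline u^1(0,0)\ge \overline u^1(\text{point on core})$; bootstrapping to the factor $2$ then uses the reflection symmetry, writing $\overline u^1(0,0)$ as twice the contribution of the upper-right quarter of the vorticity plus a positive remainder, each quarter-contribution being at least $W$. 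Strictness comes from the strict positivity of the kernel on the open support together with Steiner symmetry. If a self-contained kernel comparison proves unwieldy, I would instead invoke the cited identity for $W$ in \cite{CJS2025} as a black box and reduce everything to the pointwise kernel inequality.
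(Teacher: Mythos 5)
There is a genuine gap, and it sits exactly where you flagged "the delicate part." First, note that the paper itself does not give a self-contained argument: it proves Proposition~\ref{prop: 2D central speed} by citing \cite[Lemma 4.5]{CJS2025} (patch case) and \cite[Proposition 4.20]{CSW2025} (general case), so a blind proof has to actually carry out the comparison you only sketch. The natural representation of $W$ you are looking for is the translation identity for the upper-half centroid: since $u^2=0$ on $\set{x_2=0}$ by odd symmetry, $W\int_{\set{x_2>0}}\overline\omg\,\dd\bfx=\int_{\set{x_2>0}}\overline u^1\,\overline\omg\,\dd\bfx$, and after splitting $\overline u^1$ into the self-interaction of the upper half (which integrates to zero by antisymmetry of the kernel) and the image contribution, the claim $\overline u^1(0,0)>2W$ becomes
\begin{equation}
\iint_{\set{x_2>0}\times\set{y_2>0}}\left[\frac{1}{2}\left(\frac{x_2}{|\bfx|^2}+\frac{y_2}{|\bfy|^2}\right)-\frac{x_2+y_2}{(x_1-y_1)^2+(x_2+y_2)^2}\right]\overline\omg(\bfx)\,\overline\omg(\bfy)\,\dd\bfx\,\dd\bfy>0.
\end{equation}
The bracketed kernel is \emph{not} pointwise nonnegative: taking $\bfx=\bfy=(a,1)$ with $a$ large, the first term is $O(a^{-2})$ while the second is $\tfrac12+O(a^{-2})$, and this persists after symmetrizing in $x_1\mapsto-x_1$, $y_1\mapsto-y_1$. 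So your central strategy—"verify the pointwise domination" of the center kernel over the symmetrized kernel, with Steiner monotonicity only "upgrading a non-strict inequality to a strict one"—cannot work: the monotone-decreasing part of the Steiner hypothesis is what makes the \emph{integrated} inequality true at all (a rearrangement-type argument in the $x_1,y_1$ variables is genuinely needed), and your fallback of quoting an identity for $W$ and then doing a pointwise kernel comparison fails for the same reason.

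The alternative route in your last paragraph does not close the gap either. That "$W$ is a value attained by $\overline u^1$ on the core" plus a monotonicity/maximum-principle heuristic gives at best something like $\overline u^1(0,0)\gtrsim W$; the step "each quarter-contribution being at least $W$" is precisely the statement $\overline u^1_+(0,0)\geq W$ (half the center speed induced by the upper vortex alone exceeds the traveling speed), i.e.\ it is a restatement of the factor-$2$ claim to be proved, and no argument is offered for it. Together with the unresolved sign in your Biot--Savart fold (with the paper's convention $\calG=\frac{1}{2\pi}(-\log|\cdot|)*\overline\omg$ and $\overline u^1=\rd_{x_2}\calG$, the folded formula is $\overline u^1(0,0)=\frac{1}{\pi}\int_{\set{y_2>0}}\frac{y_2}{|\bfy|^2}\overline\omg(\bfy)\,\dd\bfy>0$), the proposal as written identifies the right objects but does not contain a proof of the key inequality, whereas the paper delegates exactly this step to \cite{CJS2025,CSW2025}.
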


\begin{proof}
The proof is essentially contained in the proof of \cite[Lemma 4.5]{CJS2025} for the case of a patch dipole $\omg=\bfone_{A_+}-\bfone_{A_-}$. For a general case, we see \cite[Proposition 4.20]{CSW2025}. See Figure~\ref{fig: dipole center} for a simple illustration.
\end{proof}

We now prove Theorem~\ref{thm: 2D atmosphere} using Proposition~\ref{prop: 2D central speed} above. 

\begin{proof}[\textbf{Proof of Theorem~\ref{thm: 2D atmosphere}}] 
From the odd-symmetry of vortex dipole $\overline\omg$, we confine our focus to the half plane $\set{x_2>0}$. As the set $\Omg:=\set{\calG[\overline\omg]-Wx_2>0}\cap\set{x_2>0}$ is bounded (due to the decay of $\calG[\overline\omg]/x_2$ given in Proposition~\ref{prop: decay of stream/term}) and consists of bounded streamlines consisting/surrounding the traveling vortex core, it is contained in the vortex domain intersecting the half plane $\set{x_2>0}$. Hence, for the proof of Theorem~\ref{thm: 2D atmosphere}, it suffices to show that $\Omg$ touches the $x$-axis, i.e.,
$$\{\bfx\in\set{x_2>0}:\,|\bfx|<R\}\subset 
\{\bfx\in\set{x_2>0}:\calG[\overline\omg](\bfx)-Wx_2>0\}
\qd\mbox{for some}\q R>0.$$ Knowing that $\calG[\overline\omg]\equiv0$ in the $x_1$-axis,
the above relation can be proved by showing that the relation $$\rd_{x_2}\calG[\overline\omg](0,0)>W$$ holds for every vortex dipole $\overline\omg$ in Definition~\ref{def: 2d dipole}. We complete the proof by noting that the above relation is given in Proposition~\ref{prop: 2D central speed}, with the right-hand side replaced by $2W$.
\end{proof}

\subsubsection{Vortex domain of ring}\q

\medskip

For vortex rings, however, both configurations may occur: a vortex domain of spheroidal type (Theorem~\ref{thm: 3D sphere}, Figure~\ref{fig:Hillsvortex}) and one of toroidal type (Theorem~\ref{thm: 3D donut}, Figure~\ref{fig:pointring}). Unlike the former, the latter configuration allows certain fluid particles located ahead of the translating ring to pass through its central opening.

\begin{thm}\label{thm: 3D sphere}
There exists a vortex ring $\overline\xi$ (Definition~\ref{def: 3D ring}) having the Steiner symmetry (Definition~\ref{def: 3D steiner})
such that the corresponding vortex domain contains an open ball $\set{\bfx\in\bbR^3:|\bfx|<R}$ for some $R>0$.
\end{thm}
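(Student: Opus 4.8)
The plan is to take Hill's spherical vortex as the desired example. Fix $W>0$ and $a>0$, and set $\overline\xi:=\frac{15W}{2a^2}\,\bfone_{B}$ with $B:=\{\bfx\in\bbR^3:|\bfx|<a\}$. First I would recall the classical fact (Hill \cite{Hill1894}; see also \cite{FT1981}) that $\overline\xi$ is a vortex ring in the sense of Definition~\ref{def: 3D ring}: it is axisymmetric and non-negative, its support $\{\overline\xi>0\}=B$ is a bounded open set, and $\xi(t,(z,r)):=\overline\xi(z-Wt,r)$ solves \eqref{eq: 3D axi. Euler} with traveling speed $W$. If one wants a self-contained verification instead of a citation, the relative stream function is explicit: $\psi[\overline\xi]-Wr^2/2$ equals $\frac{Wr^2}{4a^2}(a^2-3r^2-3z^2)$ on $B$ and $-\frac{W}{2}r^2\bigl(2-a^3(r^2+z^2)^{-3/2}\bigr)$ on $\bbR^3\setminus\overline B$; one checks that the two pieces and their normal derivatives agree along $\{|\bfx|=a\}$, that the associated $\xi$ is $\frac{15W}{2a^2}\bfone_B$, and hence that \eqref{eq: 3D axi. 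Euler} holds.

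Next I would check that $\overline\xi$ has the Steiner symmetry of Definition~\ref{def: 3D steiner}. For each fixed $r\in(0,a)$ one has $\overline\xi(z,r)=\frac{15W}{2a^2}\bfone_{\{|z|<\sqrt{a^2-r^2}\}}$, which is even in $z$ and non-increasing in $z\geq0$; for $r\geq a$ it vanishes identically. So the hypothesis of the theorem holds.

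The conclusion is then immediate from the maximality built into the definition of the vortex domain. By the first step of the proof of Proposition~\ref{prop: existence}, the core $B=\{\overline\xi>0\}$ already satisfies condition (i) of Definition~\ref{def: 3D atmosphere} (the transported vorticity at time $t$ is supported on $B+Wt\,\bfe_z$), so $B$ is contained in the unique vortex domain $\Omg$ of $\overline\xi$. Since $B=\{\bfx\in\bbR^3:|\bfx|<a\}$ already contains the ball $\{\bfx\in\bbR^3:|\bfx|<R\}$ for every $0<R<a$, the theorem follows with, say, $R:=a/2$. One can moreover pin down $\Omg$ exactly: from the explicit exterior formula above, every level set $\{\psi[\overline\xi]-Wr^2/2=\gmm'\}$ with $\gmm'<0$ satisfies $r^2\to-\gmm'/W$ and $|z|\to\ift$ along it and is therefore unbounded, so by the streamline characterization of Section~\ref{sec: streamline} we get $\Omg=B$ and an empty atmosphere, recovering the remark in Section~\ref{sec: nonempty atmos.}. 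This refinement is not needed for the statement.

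I do not expect a genuine obstacle here. The only step demanding some care is confirming that Hill's vortex genuinely fits the $L^\ift$ weak-solution formulation of Definition~\ref{def: 3D ring} --- classical, and in any case a consequence of the explicit stream function --- together with the (elementary) observation that a traveling vortex's core is always contained in its vortex domain. Were one to insist on the sharper conclusion $\Omg=B$, the only nontrivial ingredient would be the unboundedness of the exterior level sets, itself a short explicit computation.
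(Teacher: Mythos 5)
Your proposal is correct and takes essentially the same route as the paper, which likewise simply invokes Hill's spherical vortex; your additional observation that the (unique, maximal) vortex domain must contain the core, which for Hill's vortex is already a ball centered at the origin, just makes explicit what the paper leaves implicit. One minor slip in your optional explicit verification: the displayed expressions are actually $\psi[\overline\xi]-Wr^2$ rather than $\psi[\overline\xi]-Wr^2/2$ (the correct relative stream function is $\frac{3W}{4a^2}\,r^2\bigl(a^2-r^2-z^2\bigr)$ inside and $-\frac{W}{2}\,r^2\bigl(1-a^3(r^2+z^2)^{-3/2}\bigr)$ outside, vanishing on $\{|\bfx|=a\}$, and the asymptotic radius of the exterior level set $\{\psi[\overline\xi]-Wr^2/2=\gmm'\}$ is $r^2\to-2\gmm'/W$), but this does not affect your main argument, which never uses these formulas.
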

\begin{proof}
  Hill's spherical vortex $\xi_H=\bfone_{\set{|\bfx|<1}}$ is the classical example of a spheroidal vortex domain. A broader family of such domains appears in the work of Norbury \cite{Norbury1973}, where the author gives a numerical description of a one-parameter family, $\sqrt{2}\geq\alp\geq0.1$, of the vortex rings extending from   Hill's spherical vortex ($\alp=\sqrt2$) to vortex rings of small cross-section ($\alp\to0$); see \cite[Figure 4]{Norbury1973} for the corresponding vortex domains. 
\end{proof} 

\begin{thm}\label{thm: 3D donut} 
There exists a vortex ring $\overline\xi$ (Definition~\ref{def: 3D ring}) having the Steiner symmetry (Definition~\ref{def: 3D steiner})
such that the corresponding vortex domain is away from the $z$-axis (as shown in Figure~\ref{fig:pointring}), in the sense that the vortex domain is contained in the set $\{(r,\tht,z)\in\bbR^3:\,r>L\}$ for some $L>0$. 
\end{thm}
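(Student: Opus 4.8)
The plan is to produce a single thin-core vortex ring whose relative stream function is strictly negative on a whole solid cylinder around the $z$-axis and which, moreover, forces every streamline that enters that cylinder to run off to $z=-\ift$. Since, by Definition~\ref{def: 3D atmosphere}, the vortex domain is a \emph{bounded} set that is invariant under the relative flow, it cannot meet such a cylinder, which is exactly the conclusion sought. Concretely, I would start from a family $\set{\overline\xi_\eps}_{0<\eps<1/8}$ of vortex rings (Definition~\ref{def: 3D ring}) with Steiner symmetry (Definition~\ref{def: 3D steiner}) whose cores concentrate, as $\eps\to0$, on the circle $\set{r=1,\ z=0}$, say with $\supp\overline\xi_\eps\subset\set{(z,r):z^2+(r-1)^2<\eps^2}$ and circulation normalized by $\int\int\overline\xi_\eps\,r\,\ud r\,\ud z=1$, and whose traveling speed obeys $W_\eps\to\ift$ as $\eps\to0$; these are the classical desingularized point-vortex rings (see \cite{FT1981}, cf.\ also \cite{Norbury1973, CQYZZ2023}), and they may be taken Steiner symmetric, e.g.\ in their variational, energy-maximizing incarnations, cf.\ Remark~\ref{rmk: Steiner is natural}. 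It then suffices to fix one small $\eps$ and show that the corresponding vortex domain $\Omg$ is contained in $\set{(r,\tht,z)\in\bbR^3:r>1/4}$.

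Two elementary estimates prepare the ground. First, for $\eps<1/8$ we have $\supp\overline\xi_\eps\subset\set{r>3/4}$, so $\dist(\set{r\le1/2},\supp\overline\xi_\eps)\ge1/4$, and the $3$D Biot--Savart law gives $\abs{\overline\bfv_\eps(\bfx)}\le(4\pi)^{-1}\nrm{\bfomg_\eps}_{L^1(\bbR^3)}\dist(\bfx,\supp\overline\xi_\eps)^{-2}$ for $\bfx\notin\supp\overline\xi_\eps$; since $\nrm{\bfomg_\eps}_{L^1(\bbR^3)}=2\pi\int\int\overline\xi_\eps\,r^2\,\ud r\,\ud z$ stays bounded as $\eps\to0$ (because $r\le1+\eps$ on the support), there is a constant $C_0$, \emph{independent of $\eps$}, with $\abs{\overline\bfv_\eps}\le C_0$ on $\set{r\le1/2}$, and I fix $\eps$ so small that $W_\eps\ge10\,C_0$. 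Second, with $\psi_\eps:=\psi[\overline\xi_\eps]$ and $\Psi_\eps:=\psi_\eps-W_\eps r^2/2$, using $\psi_\eps\equiv0$ on the $z$-axis and $\overline v^z_\eps=\rd_r\psi_\eps/r$ (see \cite[Ch.~1]{FT1981}) one gets $\psi_\eps(z,r)=\int_0^r\overline v^z_\eps(z,s)\,s\,\ud s$, hence $\abs{\psi_\eps}\le C_0 r^2/2$ and $\rd_r\Psi_\eps=r(\overline v^z_\eps-W_\eps)\le-r(W_\eps-C_0)<0$ on $\set{0<r\le1/2}$. Therefore $\Psi_\eps<0$ there, $r\mapsto\Psi_\eps(z,r)$ is strictly decreasing on $(0,1/2]$, and $\Psi_\eps(z,1/2)\le\tfrac18(C_0-W_\eps)=:-2c_\eps<0$ uniformly in $z$, where $c_\eps:=\tfrac1{16}(W_\eps-C_0)>0$.

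The dynamical core of the argument is then the following. Assume, toward a contradiction, that $\Omg$ meets $\set{r<1/4}$; since $\Omg$ is open there is $p\in\Omg$ with $0<r(p)<1/4$. Let $Y(t)=\Phi_3(t,p)-W_\eps t\,\bfe_z$ be the trajectory of $p$ in the moving frame; it is the integral curve through $p$ of the autonomous relative velocity $\overline\bfv_\eps-W_\eps\bfe_z$, and since (i)-Definition~\ref{def: 3D atmosphere} says precisely that the relative flow $G_t$ satisfies $G_t(\Omg)=\Omg$ for $t\ge0$, hence by the group property for all $t\in\bbR$, we get $Y(t)\in\Omg$ for every $t\in\bbR$, so $Y$ is bounded. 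In $(z,r)$-coordinates $Y$ solves $\dot z=\rd_r\Psi_\eps/r$, $\dot r=-\rd_z\Psi_\eps/r$, so $\Psi_\eps(Y(t))$ is constant, equal to $\gmm:=\Psi_\eps(p)$. From the bounds above, $\tfrac12(C_0-W_\eps)r(p)^2\ge\gmm\ge-\tfrac12(C_0+W_\eps)r(p)^2$, and since $r(p)<1/4$ and $W_\eps\ge10C_0$ a short computation gives $-c_\eps<\gmm<0$. As $\Psi_\eps(z,1/2)\le-2c_\eps<\gmm$ for every $z$, the level set $\set{\Psi_\eps=\gmm}$ is disjoint from $\set{r=1/2}$; since $Y$ is a connected curve contained in that level set with $r(Y(0))<1/2$, the intermediate value theorem forces $r(Y(t))<1/2$ for all $t$. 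But then $\dot z(Y(t))=\overline v^z_\eps(Y(t))-W_\eps\le C_0-W_\eps<0$ for all $t$, whence $z(Y(t))\to-\ift$ as $t\to+\ift$, contradicting the boundedness of $Y$. Hence $\Omg\cap\set{r<1/4}=\0$, and since $\Omg$ is open, $\Omg\subset\set{(r,\tht,z)\in\bbR^3:r>1/4}$; this proves the theorem with $L=1/4$.

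I expect two points to demand the most care. The first is the choice of ring: one must cite and combine a construction that simultaneously delivers \emph{Steiner symmetry}, concentration on a circle, $W_\eps\to\ift$, and a uniform circulation bound. Existence and these asymptotics are classical, but it is worth checking explicitly that the rings used can be taken Steiner symmetric (as they can when they arise as energy maximizers, cf.\ Remark~\ref{rmk: Steiner is natural}). The second, and the genuine content, is the rigorous justification of the dynamical step, namely that $Y$ stays on a level set of $\Psi_\eps$ and is thereby trapped in the near-axis cylinder. This relies on the $C^1$ (indeed $C^{1,\alp}$) regularity of $\Psi_\eps$, on the log-Lipschitz regularity of $\overline\bfv_\eps$ that makes $\Phi_3$ well defined (already used for Proposition~\ref{prop: existence}), and on the elementary fact that a connected curve cannot cross the level $\set{r=1/2}$ while remaining on a level set of $\Psi_\eps$ disjoint from it. The rest --- the Biot--Savart bound and the one-dimensional integration above --- is routine.
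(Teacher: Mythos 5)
Your argument is correct, and it rests on the same pillars as the paper's proof: you take the Friedman--Turkington desingularized point-vortex rings (which, as the paper also uses, are Steiner symmetric, have cross-sections shrinking onto a fixed circle, bounded circulation, and traveling speed $W\to\ift$, by \cite[Thm.~2.1, Lem.~6.1, Thm.~6.5, Lem.~7.4]{FT1981}), you exploit that the induced speed near the axis stays bounded while $W$ blows up, and you conclude through level sets of the relative stream function $\psi-Wr^2/2$. Where you genuinely diverge is the final dynamical step. The paper first proves strict monotonicity $\rd_z\psi<0$ for $z>0$ (its Step II), and then uses the implicit function theorem to build an explicit unbounded barrier streamline $r=l(z)$, $0<l\le L\ll1$, lying below the core, so that any particle underneath it stays underneath and lags behind the frame. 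You instead take an arbitrary hypothetical point of the vortex domain with $r<1/4$, use forward invariance of $\Omg$ under the relative flow together with conservation of $\Psi_\eps$ along its relative trajectory, trap the trajectory in $\{r<1/2\}$ by comparing the conserved value $\gmm\in(-c_\eps,0)$ with $\Psi_\eps(\cdot,1/2)\le-2c_\eps$, and then get $\dot z\le C_0-W_\eps<0$, contradicting boundedness of $\Omg$. This bypasses the paper's Step II and the barrier construction entirely and needs only $\psi=0$ on the axis, the near-axis velocity bound, $W_\eps\gg C_0$, and $\Psi_\eps\in C^1$; it is arguably more elementary and makes the "maximal bounded invariant set" definition do the work directly. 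What the paper's route buys in exchange is more structural information (the explicit streamline under the core, reused later in Section~4 via Lemmas~\ref{lem: 3D streamline formation} and \ref{lem: 3D core in superlevel set}), whereas your quantitative constants ($1/4$, $1/2$, $10C_0$) only prove the bare statement. Two small points to tidy up if you write this out: your normalization (ring radius $1$, circulation $1$, support in an $\eps$-ball) is obtained from the FT family by the scaling/translation symmetry of the Euler equations rather than literally stated there; and note that you only ever need forward-time invariance $Y(t)\in\Omg$ for $t\ge0$, which is exactly what (i) of Definition~\ref{def: 3D atmosphere} gives, so the appeal to the group property for $t<0$ can be dropped.
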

\begin{proof}
We can find an example in the paper \cite{FT1981}, in which a vortex ring $\xi_\lmb$ with the vorticity function $f(s)=\lmb\bfone_{\set{s>0}}$ was constructed for each $\lmb>0$. \cite[Theorem 2.1]{FT1981} establishes the existence of an axisymmetric vortex ring for each $\lmb>0$: $$\xi_\lmb=\lmb\cdot\bfone_{\set{\psi-W r^2/2-\gmm>0}}\qd\mbox{for some constants}\q W>0,\gmm\geq0$$ (with the 3D axisymmetric stream function $\psi=\psi[\xi_\lmb]$) having the Steiner symmetry and other assumptions in Definition~\ref{def: 3D ring}, together with 
$$\f{1}{2}\int_{\bbR^3}r^2\xi_\lmb\,\dd\bfx=1
\qd\mbox{and}\qd
\int_{\bbR^3}\xi_\lmb\,\dd\bfx\leq1.$$  

\noindent\textbf{Step I. Asymptotic behavior of $\xi_\lmb$ as $\lmb\to\ift$}\q

As $\lmb\to\ift$, the function $2\pi r\xi(z,r)$ approaches the dirac-delta $\dlt(\cdot-(0,\sqrt{2}))$ in $\Pi$, as shown in Figure~\ref{fig:pointring}. Indeed, by Remark~{1} and Theorem~{6.5} in \cite{FT1981}, we have 
$\int_{\bbR^3}\xi_\lmb\,\dd\bfx=1$ for $\lmb\gg1$. Moreover, \cite[Lemma 7.4]{FT1981} tells us that 
$$diam(\supp\xi_\lmb)\lesssim \lmb^{-1/2}\qd\mbox{for}\q \lmb\gg1$$ where $diam(\supp\xi_\lmb)$ denotes the diameter of the cross-section of the axisymmetric set $\supp\xi_\lmb$ in the $(z,r)$-plane $\Pi$, while 
$$\inf\set{r>0:(z,r)\in\supp\xi_\lmb}\leq \sqrt{2}\leq \sup\set{r>0:(z,r)\in\supp\xi_\lmb}$$ due to the constraint $\f{1}{2}\int_{\bbR^3}r^2\xi_\lmb\,\dd\bfx=1$.\\

\noindent\textbf{Step II. Strict Steiner symmetry of the stream function $\psi$}\q

For each $r>0$, the function $\psi(z,r)$ decreases strictly in $z\geq0$.  Indeed, for each $z>0$, we observe that
$$\rd_z\psi=
\int_\Pi\rd_z G(r,z,r',z')\xi_\lmb(z',r')r'\dd r'\dd z'$$ with 
$$\rd_z G=-(z-z')\f{rr'}{2\pi}\ii{0}{\pi}\cos\vartheta\left[r^2+r'^2-2rr'\cos\vartheta+(z-z')^2\right]^{-3/2}\dd\vartheta$$
(e.g., see \cite[Chapter 1]{FT1981} for a background)
which gives
$$\begin{aligned}
\rd_z\psi&=\int_\Pi
\left[\ii{0}{\pi}\f{\cos\vartheta}{\left[r^2+r'^2-2rr'\cos\vartheta+z'^2\right]^{3/2}}\dd\vartheta
\right]\f{rr'}{2\pi}
z'\xi_\lmb(z'+z,r')r'\dd r'\dd z'\\
&=\int_\Pi\underbrace{\left[\ii{0}{\pi/2}
\f{\cos\vartheta}{\left[r^2+r'^2-2rr'\cos\vartheta+z'^2\right]^{3/2}}-\f{\cos\vartheta}{\left[r^2+r'^2+2rr'\cos\vartheta+z'^2\right]^{3/2}}\,\dd\vartheta
\right]}_{>0}\f{rr'}{2\pi}
z'\xi_\lmb(z'+z,r')r'\dd r'\dd z' \\
&=\left(\int_{\set{z'>0}\cap\Pi}+\int_{\set{z'<0}\cap\Pi}\right)\left[\ii{0}{\pi/2}\cdots\dd\vartheta\right]\f{rr'}{2\pi}
z'\xi_\lmb(z'+z,r')r'\dd r'\dd z'\\
&=\int_{\set{z'>0}\cap\Pi}
\left[
\ii{0}{\pi/2}\cdots\dd\vartheta\right]\f{rr'}{2\pi}
z'\underbrace{\left(\xi_\lmb(z'+z,r')-\xi_\lmb(-z'+z,r')\right)}_{\leq0}
r'\dd r'\dd z'<0.
\end{aligned}$$ Note that we only used the Steiner symmetry of $\xi_\lmb$, and so the relation \say{$\rd_z\psi<0$ in $\set{z>0}$} holds for any vortex ring having the Steiner symmetry. \\

\noindent\textbf{Step III. Bound of $z$-velocity}\q

For large enough $\lmb\gg1,$ it holds that
$$v^z(z,r)<\f{W}{2}
\qd\mbox{for each}\q 0\leq r\leq 1,\, z\in\bbR.$$ 
To show this,
we first fix $z\in\bbR$, and observe that
$$\begin{aligned}
v^z(z,r)=\f{1}{r}\rd_r\psi&=\int_\Pi
\f{r'}{2\pi r}\left(\ii{0}{\pi}\f{\cos\vartheta\left(
r'^2-rr'\cos\vartheta+(z-z')^2
\right)}{\left[r^2+r'^2-2rr'\cos\vartheta+(z-z')^2\right]^{3/2}}\dd\vartheta
\right)\xi_\lmb(z',r')r'\dd r'\dd z'\\
&=\int_\Pi\f{r'}{2\pi}
\left(\f{1}{r}\ii{0}{\pi/2}\cos\vartheta\left[f(\cos\vartheta)-f(-\cos\vartheta)\right]
\dd\vartheta
\right)\xi_\lmb(z',r')r'\dd r'\dd z'\\
&=\int_\Pi\f{r'}{2\pi}
\left(\f{1}{r}
\ii{0}{\pi/2}
2(\cos\vartheta)^2 f'(s)
\dd\vartheta
\right)\xi_\lmb(z',r')r'\dd r'\dd z'
\end{aligned}$$ for some $s=s(\vartheta,r,r',z,z')\in(-\cos\vartheta,\cos\vartheta)$, where 
$$f(s)=\f{\left(
r'^2-rr's+(z-z')^2
\right)}{\left[r^2+r'^2-2rr's+(z-z')^2\right]^{3/2}}$$ and 
$$f'(s)=r\left(
-r'D+3r'\left(r'^2-rr's+(z-z')^2\right)
\right)D^{-5/2}\qd\mbox{with}\q
D=r^2+r'^2-2rr's+(z-z')^2.$$
That is, we get 
$$
v^z(z,r)
=\int_\Pi\f{r'}{2\pi}\left(
\ii{0}{\pi/2}
2(\cos\vartheta)^2\left(
-r'D+3r'\left(r'^2-rr's+(z-z')^2\right)
\right)D^{-5/2}\dd\vartheta
\right)\xi_\lmb(z',r')r'\dd r'\dd z'.$$ As we have
$$ (r-r')^2+(z-z')^2\leq D\leq (r+r')^2+(z-z')^2,$$ we get
$$\begin{aligned}
|v^z(z,r)|&\lesssim
\int_\Pi r'^2\left(
\ii{0}{\pi/2}(D+r'^2+rr'+(z-z')^2)D^{-5/2}\dd\vartheta
\right)\xi_\lmb(z',r')r'\dd r'\dd z'\\
&\lesssim
\int_\Pi r'^2\f{(r+r')^2+(z-z')^2}{[(r-r')^2+(z-z')^2]^{5/2}}\xi_\lmb(z',r')r'\dd r'\dd z'.
\end{aligned}$$
As $\lmb\to\ift$, we have $r'\to\sqrt2$, and by assuming $r<1$, we get 
$$|v^z(z,r)|\lesssim
\int_\Pi\f{1}{(1+(z-z')^2)^{3/2}}\xi_\lmb(z',r')r'\dd r'\dd z'\lesssim 1$$ for sufficiently large $\lmb>0$. Since $W\gtrsim\log\lmb$ as $\lmb\gg1$ due to Theorem~{6.5} and Lemma~{6.1} in \cite{FT1981}, we have $$|v^z(z,r)|\leq \f{W}{2}\qd\mbox{for each}\q 
0\leq r\leq 1,\, z\in\bbR,\qd\mbox{given that}\q\lmb\gg1.$$

\noindent\textbf{Step IV. Unbounded streamlines below the vortex core}\q

It remains to show that there exists an unbounded streamline  of the form $\set{(z,l(z))\in\Pi:z\in\bbR}$ for some function $l:\bbR\to(0,1)$
that is in $C^1(\bbR)$ and passes under the vortex core $\set{\xi_\lmb>0}$. It would imply that the 3D flow map $\Phi_3(t,\cdot)$ starting below the streamline $\set{r<l(z)}$ stays beneath the line (in the moving frame) for all time, i.e.,
$$\set{\Phi_3(t,(z,r))-Wt\bfe_z:t\in\bbR,\,r<l(z)}\subset\set{r<l(z)},$$ while such a particle fails to hop on the moving frame $(W,0)$ due to the bound of $z$-velocity $v^z$ in \textbf{Step III}, which is $W/2$.\\

\noindent\textit{(Claim) If $\lmb\gg1$ is large enough, there exists an unbounded streamline $\set{(z,l(z))\in\Pi:z\in\bbR}$ for some function $l:\bbR\to(0,1)$
that is in $C^1(\bbR)$, contained in the level set $\Gmm_3(\gmm')=\set{\psi-W r^2/2=\gmm'}$ for some $\gmm'<0$,
such that $$l(z)<\inf\set{r>0:\, (z,r)\in\supp\xi_\lmb}\qd\mbox{for each}\q z\in\bbR.$$}

Let $\lmb\gg1$ be large enough.  Then we have 
$$\lim_{r\to0}\left(\f{\psi(0,r)}{r^2}-\f{W}{2}\right)=\f{1}{2}\left(v^z(0,0)-W\right)\leq -\f{W}{4}\lesssim -\log\lmb<0$$ due to \textbf{Step III}. Then, we fix a small $L\ll1$ such that 
$$\f{\psi(0,L)}{L^2}-\f{W}{2}<0.$$ We then put 
$$\phi(z,r):=\psi(z,r)-\f{W}{2}r^2 
\qd\mbox{and}\qd
\gmm':=\phi(0,L)<0.$$ By \textbf{Step III}, we observe that for each $z>0$, the function $\phi(z,r)$ decreases strictly in $r\in[0,L]$ due to
$$\rd_r\phi(z,r)=\rd_r\psi-W r=r(v^z(z,r)-W)<0.$$ Moreover, for each $z>0$, we have 
$$\phi(z,0)=0>\gmm'\qd\mbox{and}\qd
\phi(z,L)<\phi(0,L)=\gmm'$$ by \textbf{Step II}, and therefore there exists a unique constant $l=l(z)\in(0,L)$ such that $$\phi(z,l(z))=\gmm'.$$ The same holds for $z<0$, with the relation $l(-z)=l(z)$ due to the even symmetry of $\psi.$ We put $l(0):=L$, and we obtain a streamline $\calS$ parametrized by $l:\bbR\to(0,L]$, i.e.,
$$\calS:=\set{(z,l(z))\in\Pi:z\in\bbR}$$ contained in the level set 
$$\Gmm_3(\gmm')=\set{\psi-W r^2/2=\gmm'}.$$ We note that $\calS$ pass below the vortex core $\supp\xi_\lmb$ since $\lmb\gg1$ and $L\ll1$.

We can easily show that $l$ is continuous. Since $\rd_r\phi(z,r)<0$ for each $r\in(0,L),\,z\in\bbR$, we apply the implicit function theorem at each point $(z,l(z))$ to verify that the extended streamline $l:\bbR\to(0,L]$ is locally $C^1$. One can verify  $l\in C^1(\bbR)$ by the following relation 
$$\psi(z,l(z))-W\f{l(z)^2}{2}=\gmm',$$ which leads to 
$$l'(z)=\f{v^r(z,l(z))}{v^z(z,l(z))-W}\q\imp\q
|l'(z)|\leq \f{2\nrm{\bfv}_{L^\ift}}{W}<\ift$$ due to \textbf{Step III} and the estimate (1.8) in \cite{LJ2025}:
$$\nrm{\bfv}_{L^\ift}\lesssim \nrm{\xi}_{L^\ift(\bbR^3)}^{1/2}\nrm{\xi}_{L^1(\bbR^3)}^{1/4}\nrm{r^2\xi}_{L^1(\bbR^3)}^{1/4}\lesssim \lmb^{1/2}.$$
It completes the proof of \textit{(Claim)}, and so Theorem~\ref{thm: 3D donut} is proved.
\end{proof}

\section{Simply-connected vortex domain}\label{subsec: classif. of atmos.}

In this section, we extend our analysis by incorporating an additional geometric condition beyond Steiner symmetry, namely \textit{simply-connectedness}. In particular, we show that if a vortex core is simply connected, then its vortex domain is also simply connected. This follows from the characterization of a vortex domain as a superlevel set of the associated stream function. Indeed, the examples used in the proofs of Theorems~\ref{thm: 3D sphere} and~\ref{thm: 3D donut} are all simply connected, and hence their vortex domains arise precisely as superlevel sets.

\medskip

Throughout this section, several arguments appear in both two and three dimensions with only minor modifications—for example, Lemmas~\ref{lem: 2D core in superlevel set} and~\ref{lem: 3D core in superlevel set}, and Lemmas~\ref{lem: 2D streamline formation} and~\ref{lem: 3D streamline formation}. Since one of the aims of this paper is to provide a careful, side-by-side analysis of the similarities and differences between two-dimensional vortex dipoles and three-dimensional vortex rings, we present the parallel arguments explicitly in order to keep the exposition complete and self-contained.

\medskip
 From now on, particle trajectories are understood in the moving frame.

\subsection{Dipole with simply-connected core}\q

\medskip
We begin by considering the case of a 2D vortex dipole. We give the main statement below, whose proof will be given at the end of this subsection. 
\begin{thm}\label{thm: class. of 2D atmos.}
Let $\overline\omg$ be any vortex dipole (Definition~\ref{def: 2d dipole}) with a traveling speed $W>0$. In the half plane $\set{\bfx\in\bbR^2:x_2>0}$, if $\overline\omg$ has the Steiner symmetry (Definition~\ref{def: 2D steiner}) with a simply-connected vortex core, then the corresponding vortex domain $\Omg$ also has the Steiner symmetry and is simply connected in the half plane. In particular, the vortex domain is given by the superlevel set $$\Omg=\left\{\bfx\in\bbR^2:x_2>0,\,\calG[\overline\omg](\bfx)-Wx_2>0\right\}$$ in the half plane. Moreover, the vortex domain has an \say{oval-shape} in the sense that, for some constant $R\in(0,\ift)$ and a function $l:[0,R]\to[0,\ift)$ satisfying $l>0$ in $[0,R)$, $l(R)=0$, and $l\in C([0,R])\cap C^1((0,R))$, we have
$$\Omg=\left\{\bfx\in\bbR^2:
x_2\in(0,R),\,|x_1|<l(x_2)\right\}.$$
\end{thm}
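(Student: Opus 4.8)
Write $\calG:=\calG[\overline\omg]$, let $\Psi:=\calG-Wx_2$ be the relative stream function, and set $\Omg_0:=\set{\bfx\in\bbR^2:x_2>0,\ \Psi(\bfx)>0}$. By Proposition~\ref{prop: decay of stream/term}, $\Omg_0$ is bounded; it is a union of bounded relative streamlines and is invariant under the relative flow $\nb^\perp\Psi$ (because $\Psi$ is constant along that flow, $\set{x_2>0}$ is invariant, and $\set{x_2=0}$ is invariant since $\overline u^2\equiv0$ there), so by maximality (Definition~\ref{def: 2D atmosphere}, Proposition~\ref{prop: existence}) $\Omg_0\subseteq\Omg^+:=\Omg\cap\set{x_2>0}$, and $\Omg$ is symmetric about $\set{x_2=0}$. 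The plan is to show $\Omg^+=\Omg_0$ and that $\Omg_0$ has the claimed oval form. The first step is \emph{strict Steiner symmetry of $\Psi$}: evenness in $x_1$ is immediate from the radial Biot--Savart kernel, and $\rd_{x_1}\Psi=\rd_{x_1}\calG<0$ on $\set{x_1>0,\,x_2>0}$ follows from a layer--cake decomposition of $\overline\omg$ into Steiner-symmetric superlevel sets: for a slice $(-\rho(c),\rho(c))$ at height $c$, the half-plane Green's function $G_H$ is strictly decreasing in $(x_1-y_1)^2$, so $\int_{-\rho(c)}^{\rho(c)}\rd_{x_1}G_H(\bfx,(y_1,c))\,\dd y_1=G_H(\bfx,(-\rho(c),c))-G_H(\bfx,(\rho(c),c))<0$ for $x_1>0$, and integrating over $c$ and the layers gives the claim. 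Two consequences: (a) for fixed $x_2>0$, $x_1\mapsto\Psi(x_1,x_2)$ is even, strictly decreasing on $[0,\infty)$, with $\Psi\downarrow-Wx_2<0$ as $|x_1|\to\infty$, so each horizontal slice of $\Omg_0$ is an interval $(-l(x_2),l(x_2))$; (b) the vertical relative velocity $-\rd_{x_1}\Psi$ is strictly positive in $\set{x_1>0,\,x_2>0}$ (strictly negative in $\set{x_1<0,\,x_2>0}$), so $x_2$ is strictly monotone along relative trajectories there, with no equilibria off the axis, and axial equilibria occur exactly where $\rd_{x_2}\calG(0,x_2)=W$.

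Next I would study the core $C^+:=\set{\overline\omg>0}\cap\set{x_2>0}$ and the axial profile $h(x_2):=\Psi(0,x_2)$. Since $\overline\omg$ is steady in the moving frame, $\overline\omg$ is constant along relative streamlines, so $\rd C^+$ is a single level curve $\set{\Psi=\gamma_0}$; connectedness plus Steiner symmetry force the axis to meet $C^+$ in a single interval $(t_-,t_+)$, with $\set{0}\times(t_-,t_+)\subseteq C^+$ and $(0,t_\pm)\in\rd C^+$, so $\gamma_0=h(t_-)$. On $(0,t_-]$ and $[t_+,\infty)$ one has $\overline\omg(0,\cdot)=0$, hence $h''=\rd_{x_2}^2\calG(0,\cdot)=-\rd_{x_1}^2\calG(0,\cdot)\ge0$ ($\calG(\cdot,x_2)$ is maximized at $x_1=0$ by (a)), i.e.\ $h$ is convex on each; with $h(0)=0$ and $h'(0)=\rd_{x_2}\calG(0,0)-W>W>0$ by Proposition~\ref{prop: 2D central speed}, convexity makes $h$ strictly increasing and positive on $(0,t_-]$, so $\gamma_0=h(t_-)\ge0$ ($=0$ exactly for Sadovskii cores $t_-=0$). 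Since $\Psi$ is superharmonic on $C^+$ ($\Dlt\Psi=-\overline\omg<0$), the strong minimum principle yields $\Psi>\gamma_0$ on $C^+$; as $\rd C^+\subseteq\set{\Psi=\gamma_0}$, $C^+$ is relatively clopen in $\set{\Psi>\gamma_0,\,x_2>0}$, so $C^+=\set{\Psi>\gamma_0,\,x_2>0}=\set{x_2\in(t_-,t_+),\,|x_1|<\rho(x_2)}$ with $\rho$ continuous (implicit function theorem) and $\rho(t_\pm)=0$; in particular $C^+\subseteq\Omg_0$. Finally $h$ is convex on $[t_+,\infty)$ with $h(x_2)/x_2\to-W$, hence strictly decreasing from $h(t_+)=\gamma_0$; combined with $h>\gamma_0\ge0$ on $(t_-,t_+)$ and $h>0$ on $(0,t_-]$, this gives $h>0$ on $(0,R)$ and $h<0$ on $(R,\infty)$ for the unique zero $R\ge t_+$ of $h$ on $[t_+,\infty)$, and $h'<0$ on $[t_+,\infty)$, so there are no axial equilibria at levels $\le0$.

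Assembling: by (a) and the profile analysis, $\set{x_2>0:h(x_2)>0}=(0,R)$, so $\Omg_0=\set{x_2\in(0,R),\,|x_1|<l(x_2)}$ with $l(x_2)$ the unique positive root of $\Psi(\cdot,x_2)=0$; the implicit function theorem gives $l\in C^1((0,R))$, one checks $l(x_2)\to0$ as $x_2\to R^-$ (else $\Psi(\dlt,R)\ge0>\Psi(0,R)$ for some $\dlt>0$), $l>0$ on $(0,R)$, and $l$ extends continuously to $0$ with $l(0)>0$ (using Theorem~\ref{thm: 2D atmosphere}). For $\Omg^+=\Omg_0$: any bounded relative streamline in $\set{\Psi<0,\,x_2>0}$ would, by (b), have forward and backward limit sets each a single axial equilibrium at its negative level, contradicting the previous step; hence $\Omg^+\cap\set{\Psi<0}=\emptyset$, and $\Omg^+$ contains no point of $\set{\Psi=0,\,x_2>0}$ either, since openness of $\Omg$ would then force $\Omg$ into $\set{\Psi<0,\,x_2>0}$. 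Thus $\Omg^+\subseteq\Omg_0$, and $\Omg^+=\Omg_0=\set{\Psi>0,\,x_2>0}$. Then $\Omg_0$ is manifestly Steiner symmetric; using $\Omg^+=\Omg_0$ and openness one gets $\Omg\cap\set{x_2=0}=(-l(0),l(0))\times\set{0}$, so the vortex domain is $\Omg=\set{|x_2|<R,\,|x_1|<l(|x_2|)}$, homeomorphic to a disk, hence simply connected, with the stated half-plane description.

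The main obstacle is the structural input of the second paragraph: establishing that the steady Euler condition (not merely Steiner symmetry) forces $\rd C^+$ to be a single level curve of $\Psi$ at a nonnegative level, so the simply-connected core is itself a superlevel set contained in $\Omg_0$; together with the dynamical argument ruling out bounded sub-zero streamlines. This is precisely the place where the two-dimensional case diverges from the three-dimensional one, where sub-zero closed streamlines do occur and produce the toroidal domains of Theorem~\ref{thm: 3D donut}.
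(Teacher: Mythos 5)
Your argument hinges on the structural claim in your second paragraph — that the steadiness of $\overline\omg$ in the moving frame forces $\rd C^+$ to be a single level curve $\set{\Psi=\gamma_0}$ with $\gamma_0\ge 0$, hence $C^+=\set{\Psi>\gamma_0,\,x_2>0}$ — and this is a genuine gap, not a step you have established. You assert it from \say{$\overline\omg$ is constant along relative streamlines}, but for an $L^\ift$ vortex dipole the core is only defined up to a set of measure zero, constancy along streamlines holds only a.e., and nothing rules out a priori that different boundary arcs of the (non-unique) open representative lie on different levels of $\Psi$; your subsequent \say{relatively clopen} step also only shows $C^+$ is a union of connected components of $\set{\Psi>\gamma_0}$, not the full superlevel set. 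This claim is doing essential work: it is your only source of the inequality $\Psi(0,x_2)>0$ across the heights $(t_-,t_+)$ occupied by the core, which is precisely the point where the 2D oval case separates from the 3D toroidal case (\textbf{[Case II]} of Theorem~\ref{thm: class. of 3D atmos.}), so it cannot be waved through. Your own closing paragraph concedes this is \say{the main obstacle}, i.e.\ the proof is incomplete exactly at its crux. The paper never proves (or needs) such a statement: instead it uses only that the core is contained in the bounded, flow-invariant vortex domain, together with the escape mechanism of Lemma~\ref{lem: 2D streamline formation} — if a near-axis core point had $\calG-Wx_2<0$ while the axis below it stayed above that level, its relative trajectory would be carried along an unbounded level curve to infinity (this is \textit{(Claim 2)} and the final step of Lemma~\ref{lem: 2D core in superlevel set}). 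Replacing your structural claim by that dynamical argument is the natural repair.

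The remainder of your outline is sound and, in places, a genuinely different packaging of the paper's proof: the layer-cake derivation of strict Steiner symmetry of $\calG$ parallels \eqref{eq1117_1}; your weak convexity of $h(x_2)=\calG(0,x_2)-Wx_2$ off the core (from $\Dlt\calG=0$ and $\rd_{x_1}^2\calG(0,\cdot)\le0$) together with $h'(0)>W$ and $h\to-\ift$ suffices for your purposes, whereas the paper proves the strict inequality $\rd_{x_1}^2\calG(0,s)<0$ by a kernel computation; and your exclusion of bounded relative streamlines at negative levels via monotonicity of $x_2$ off the axis and absence of negative-level axial equilibria is an attractive limit-set alternative to the paper's explicit construction of escaping streamlines (Lemma~\ref{lem: 2D streamline formation} and \textbf{Case \circled{2}} of the theorem's proof), though you should note it relies on uniqueness of trajectories for the log-Lipschitz relative field and on the negative-level axis analysis that currently depends on the unproved structural claim. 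Minor loose ends: the continuity of $l$ at $x_2=0$ with $l(0)>0$ needs the monotonicity argument the paper cites (\cite[pg. 54-55]{CJS2025}), not just Theorem~\ref{thm: 2D atmosphere}.
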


Lemma~\ref{lem: 2D streamline formation} and Lemma~\ref{lem: 2D core in superlevel set} below will be used in the proof of Theorem~\ref{thm: class. of 2D atmos.} that follows.

\begin{lem}\label{lem: 2D streamline formation}
Let $\overline\omg$ be any vortex dipole (Definition~\ref{def: 2d dipole}) with a traveling speed $W>0$. In the half plane $\set{\bfx\in\bbR^2:x_2>0}$, if $\overline\omg$ has the Steiner symmetry (Definition~\ref{def: 2D steiner}), then its stream function $\calG[\overline\omg]$ has the \textit{strict} Steiner symmetry in the half plane. Moreover, any point $\bfy=(y_1,y_2)\in\set{\bfx\in\bbR^2:x_1<0,x_2>0}$ satisfying $$\calG[\overline\omg](y_1,y_2)-Wy_2=:\gmm<0\qd\mbox{and}\qd 
\calG[\overline\omg](0,s)-Ws>\gmm\qd\mbox{for each}\q 
s\in(-\gmm/W,y_2)
$$ is not contained in the corresponding vortex domain. See Figure~\ref{fig: 2D streamline formation} for a simple illustration for this.
\end{lem}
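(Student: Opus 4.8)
\textbf{Proof plan for Lemma~\ref{lem: 2D streamline formation}.}

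The strict Steiner symmetry of $\calG[\overline\omg]$ follows from the same computation as in Step II of the proof of Theorem~\ref{thm: 3D donut}: one differentiates $\calG[\overline\omg]$ in $x_1$ using the explicit 2D Biot--Savart kernel $G_2(\bfx,\bfx') = -\frac{1}{2\pi}\log|\bfx-\bfx'|$ (together with its image part enforcing $\calG\equiv 0$ on the $x_1$-axis), folds the integral over $\set{x_1'>0}$ against $\set{x_1'<0}$, and observes that the resulting kernel difference has a fixed sign while the factor $\overline\omg(x_1',x_2')-\overline\omg(-x_1',x_2')$ (using $\overline\omg\ge 0$ in $\set{x_2>0}$ and the Steiner hypothesis) has the opposite one; since $\overline\omg\not\equiv 0$, the inequality is strict, giving $\rd_{x_1}\calG[\overline\omg]<0$ in $\set{x_1>0,x_2>0}$. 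Thus $\calG[\overline\omg]$ (hence the relative stream function $\calG[\overline\omg]-Wx_2$) is strictly decreasing in $|x_1|$ for fixed $x_2>0$, and symmetric in $x_1$.

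For the second assertion, the idea is to show that the particle trajectory (in the moving frame) through $\bfy$ is \emph{unbounded}, so $\bfy$ cannot lie in the vortex domain, which by Proposition~\ref{prop: existence} and the discussion in Section~\ref{sec: streamline} consists only of points whose moving-frame trajectory is bounded (the vortex domain is contained in the union of its core and the bounded streamlines surrounding it; equivalently, any point in the vortex domain has a bounded orbit under $\Phi_2(t,\cdot)-(Wt,0)$). The trajectory through $\bfy$ lies in the level set $\Gmm_2(\gmm)=\set{\bfx:x_2>0,\ \calG[\overline\omg](\bfx)-Wx_2=\gmm}$ with $\gmm<0$. Since $\gmm<0$, this level set is unbounded (by the third bullet after Proposition~\ref{prop: decay of stream/term}), but a priori it could contain several distinct streamlines, one of which might be bounded; the two hypotheses are designed precisely to rule this out for the component through $\bfy$. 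First, because $\calG[\overline\omg]\equiv 0$ on the $x_1$-axis and $\calG[\overline\omg]-Wx_2\to \gmm<0$ cannot be attained there, the level set $\Gmm_2(\gmm)$ does not meet the $x_1$-axis; together with the strict monotonicity in $|x_1|$, for each fixed $x_2>0$ the equation $\calG[\overline\omg](x_1,x_2)-Wx_2=\gmm$ has at most one solution with $x_1>0$ (and its mirror with $x_1<0$). The second hypothesis, $\calG[\overline\omg](0,s)-Ws>\gmm$ for $s\in(-\gmm/W,y_2)$, guarantees that on the $x_2$-axis the relative stream function stays strictly above $\gmm$ on the whole segment joining the level $x_2=-\gmm/W$ (where $\calG=0$ so the value is $0-W(-\gmm/W)=\gmm$... actually the value there is $-(-\gmm)=\gmm$, i.e. it equals $\gmm$ only in the limit) up to height $y_2$; combined with strict decrease in $|x_1|$ this shows that the connected component of $\set{\calG[\overline\omg]-Wx_2>\gmm}\cap\set{x_2>0}$ containing the axis-segment is ``to the left'' of the graph of $x_2\mapsto l_\gmm(x_2)$ (the unique positive solution), so that the streamline through $\bfy$ is the \emph{outer} branch of $\Gmm_2(\gmm)$, not an inner closed loop. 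One then tracks this branch: using $\rd_{x_1}\calG<0$ and the decay $\calG[\overline\omg]/x_2\to 0$ from Proposition~\ref{prop: decay of stream/term}, the branch through $\bfy$ cannot close up (it would have to cross the $x_2$-axis, contradicting the hypothesis, or cross itself) and must escape to infinity, i.e. $\sup_{t}|x_1(\Phi_2(t,\bfy)-(Wt,0))|=\infty$ along the orbit. Hence the orbit is unbounded and $\bfy\notin\Omg$.

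The main obstacle is the second, more delicate step: ruling out that the streamline through $\bfy$ is a bounded (closed) orbit even though it sits in an unbounded level set. Concretely, one must argue that the connected component of $\Gmm_2(\gmm)$ containing $\bfy$ is genuinely the unbounded branch. I expect to handle this by a topological/separation argument in the half-plane: the open set $U:=\set{\bfx:x_2>0,\ \calG[\overline\omg](\bfx)-Wx_2>\gmm}$ contains, by the hypotheses, the vertical segment $\set{(0,s):s\in(-\gmm/W,y_2]}$ and, by boundedness of $\set{\calG-Wx_2>0}\cap\set{x_2>0}$ and connectedness considerations, a neighborhood of the core; strict monotonicity in $|x_1|$ forces the component of $U$ containing this segment to be of the form $\set{x_2\in I,\ |x_1|<l_\gmm(x_2)}$ for an interval $I$, with $\bfy$ lying \emph{on its boundary} only if $y_1=-l_\gmm(y_2)$. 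The point $\bfy$ with $y_1<0$ and $\calG-Wx_2=\gmm$ therefore lies on the left boundary curve $x_1=-l_\gmm(x_2)$; since $U$ is unbounded (its complement's bounded part is controlled), this boundary curve is unbounded, giving the unbounded streamline. A clean way to finalize is to invoke the ODE characterization as in Step IV of the proof of Theorem~\ref{thm: 3D donut}: parametrize the branch as a graph $x_1 = -l_\gmm(x_2)$, verify via the implicit function theorem (using $\rd_{x_1}\calG-0\neq 0$, i.e. $\rd_{x_1}\calG\ne W\cdot 0$, indeed $\rd_{x_1}(\calG-Wx_2)=\rd_{x_1}\calG<0$) that $l_\gmm$ is $C^1$ and defined for all $x_2$ in an unbounded interval, and conclude boundedness fails along the corresponding trajectory, so $\bfy\notin\Omg$.
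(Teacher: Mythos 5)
Your first half is fine and matches the paper: the strict Steiner symmetry of $\calG[\overline\omg]$ is obtained exactly as you describe, by differentiating the image-kernel representation in $x_1$, folding the integral, and using the sign of $\overline\omg(y_1+x_1,y_2)-\overline\omg(-y_1+x_1,y_2)$ (this is \eqref{eq1117_1} in the paper). Your overall strategy for the second assertion is also the paper's: the hypotheses place $\bfy$ on the left branch of $\set{\calG[\overline\omg]-Wx_2=\gmm}$, which by strict monotonicity in $|x_1|$ is a graph $x_1=-f(x_2)$, and one concludes $\bfy\notin\Omg$ because any point of a vortex domain has a bounded moving-frame orbit.

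The gap is at the decisive step: you pass from \emph{the level-set branch through $\bfy$ is unbounded} to \emph{the orbit of $\bfy$ is unbounded} essentially by assertion (``must escape to infinity, i.e.\ $\sup_t|x_1(\cdots)|=\ift$ along the orbit'', ``conclude boundedness fails along the corresponding trajectory''). These are not the same statement: an orbit contained in an unbounded streamline could a priori stay bounded by limiting to a point, so a dynamical argument is required, and you never make it. The paper's proof supplies it as follows: on $\set{x_1<0,\,x_2>0}$ the vertical relative velocity $\overline{u}^2=-\rd_{x_1}\calG$ is strictly negative (this is where \eqref{eq1117_1} enters dynamically, not just geometrically), the hypothesis plus $\calG>0$ in $\set{x_2>0}$ prevents the forward orbit from crossing the $x_2$-axis, and on each compact piece $\set{x_2\geq-\gmm/W+\eps}$ of the graph one has $\sup\overline{u}^2<0$, so the particle descends past every such level in finite time; hence $x_2(t)\downarrow-\gmm/W$ and $|x_1(t)|=f(x_2(t))\to\ift$, using $f(s)\to\ift$ as $s\downarrow-\gmm/W$ (itself proved by a contradiction argument from $\calG>0$ and the decay of $\calG$). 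Relatedly, your finalization claims the graph $x_1=-l_\gmm(x_2)$ is ``defined for all $x_2$ in an unbounded interval'', which is false: the branch lives over the bounded interval $(-\gmm/W,y_2]$ and is unbounded only horizontally, asymptoting to the line $\set{x_2=-\gmm/W}$, so as written that step would not produce the escape to infinity. (A minor symptom of the same confusion: at $(0,-\gmm/W)$ the relative stream function equals $\calG(0,-\gmm/W)+\gmm>\gmm$, since $\calG>0$ off the $x_1$-axis, not $\gmm$.) With the descent argument and the correct description of the asymptote inserted, your plan becomes the paper's proof.
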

\begin{figure}
\centering
\includegraphics[width=0.9\linewidth]{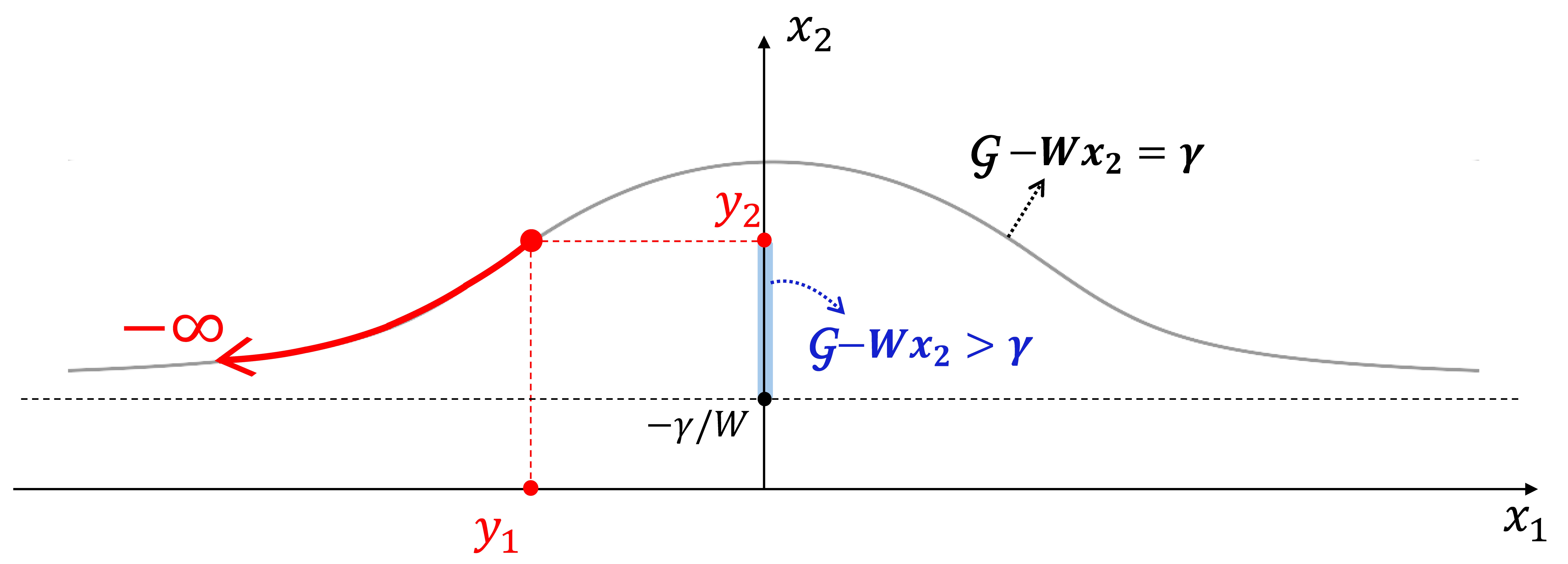}
\caption{In Lemma~\ref{lem: 2D streamline formation}, the flow map takes the particle at $\bfy=(y_1,y_2)$ indefinitely away from the origin. For all time, this particle is imprisoned in the level set $\set{\calG[\overline\omg]-Wx_2=\gmm}$ and asymptotically approaches the horizontal line $\set{x_2=-\gmm/W}$.} 
\label{fig: 2D streamline formation}
\end{figure}

\begin{proof}
We first note that the stream function $\calG=\calG[\overline\omg]$ is given by 
\begin{equation}\label{eq: stream}
\calG(\bfx)=
\int_{\bbR^2}\f{1}{2\pi}\left(-\log|\bfx-\bfy|\right)\overline\omg(\bfy) d\bfy
=\int_{y_2>0}\f{1}{2\pi}\left(\log|\bfx-\bfy^*|-\log|\bfx-\bfy|
\right)\overline\omg(\bfy) d\bfy
\end{equation}
where $\bfy^*=(y_1,-y_2)$ and satisfies 
\begin{equation}\label{eq1125_3}
\calG(x_1,x_2)=-\calG(x_1,-x_2)
\qd\mbox{and}\qd\calG(x_1,0)=0
\end{equation}
for any $(x_1,x_2)\in\bbR^2$, due to the odd symmetry of $\overline\omg$. Moreover, the even symmetry $\calG(x_1,x_2)=\calG(-x_1,x_2)>0$ is obvious, and for fixed $x_1>0$, we have
\begin{equation}\label{eq1117_1}\begin{aligned}
2\pi\rd_{x_1}\calG(x_1,x_2)&=\int_{y_2>0}(y_1-x_1)\left[\f{1}{|\bfx-\bfy|^2}-\f{1}{|\bfx-\bfy^*|^2}\right]\overline\omg(\bfy) \dd \bfy =\int_{y_2>0}\f{4x_2y_2\q(y_1-x_1)}{|\bfx-\bfy|^2|\bfx-\bfy^*|^2}\overline\omg(\bfy) \dd\bfy\\
&=\ii{0}{\ift}\ii{-\ift}{\ift} \f{4x_2y_2y_1}{(y_1^2+(x_2-y_2)^2)(y_1^2+(x_2+y_2)^2)}\overline\omg(y_1+x_1,y_2) \dd y_1\dd y_2\\
&=\ii{0}{\ift}\ii{0}{\ift}\f{4x_2y_2y_1}{(y_1^2+(x_2-y_2)^2)(y_1^2+(x_2+y_2)^2)}\underbrace{\left[\overline\omg(y_1+x_1,y_2)-\overline\omg(-y_1+x_1,y_2)\right]}_{\leq0} \dd y_1 \dd y_2\\
&<0
\end{aligned}\end{equation} by the Steiner symmetry of $\overline\omg$. It implies that $\calG$ has the \textit{strict} Steiner symmetry. 

\medskip Now, we suppose that there exists a point $\bfy=(y_1,y_2)\in\set{\bfx\in\bbR^2:x_1<0,x_2>0}$ satisfying 
$$\calG[\overline\omg](y_1,y_2)-Wy_2=:\gmm<0\qd\mbox{and}\qd 
\calG[\overline\omg](0,s)-Ws>\gmm\qd\mbox{for each}\q 
s\in(-\gmm/W,y_2).$$
We will show that the negativity of $\gmm$ prevents the point $\bfy$ from being contained in the core. To this end, we will construct an unbounded streamline along which the particle initially at $\bfy$ moves away from the origin in the moving frame indefinitely as $t\to\ift.$ 

\medskip
For each $x_2\in(-\gmm/W,y_2]$, we consider the equation
\begin{equation}\label{eq0922_1}
\calG(x_1,x_2)-Wx_2=\gmm\qd\mbox{for}\q x_1<0.
\end{equation} Since $\calG(0,x_2)-Wx_2>\gmm$ for each $x_2\in(-\gmm/W,y_2]$, the \textit{strict} Steiner symmetry of $\calG$ guarantees that the solution of \eqref{eq0922_1} is uniquely determined by a continuous function $x_1=-f(x_2)$ for some $f:(-\gmm/W,y_2]\to(0,\ift)$ with $f(y_2)=-y_1$, whose differentiability is obvious from the differentiability and the \textit{strict} Steiner symmetry of $\calG$. We now consider the limit $x_2\downarrow-\gmm/W$. Since $\calG>0$ in $\set{x_2>0}$ and $\calG\to0$ as $|\bfx|\to\ift$, 
we have $f(x_2)\to\ift$ as $x_2\downarrow-\gmm/W$. Otherwise, if $\liminf_{x_2\downarrow -\gmm/W} f(x_2)\in[0,\ift)$, then for some subsequence $s_n\to -\gmm/W$, we have 
$$0=\lim_{n\to\ift}Ws_n+\gmm=\lim_{n\to\ift}\calG(-f(s_n),s_n)=\calG\left(-\liminf_{x_2\downarrow -\gmm/W} f(x_2), -\gmm/W\right)>0,$$ which is a contradiction.

\medskip
We now consider the trajectory of a particle initially at $\bfy$. As its streamline must be contained in the level set of $\calG-Wx_2$, and
the vertical velocity $\overline{u}^2=-\rd_{x_1}\calG$ is negative in the left side $\set{\bfx\in\bbR^2:x_1<0,x_2>0}$ (by \eqref{eq1117_1}) the particle must travel along the curve $(-f(x_2),x_2)$ starting from $x_2=y_2$ and heading to $x_2\downarrow-\gmm/W$ without any standstill; see Figure~\ref{fig: 2D streamline formation}.
Indeed,  if the particle cannot go below than $-\gmm/W+\epsilon$ forever for some $\epsilon>0$, then we have a contradiction due to the fact $$\sup_{s\in[-\gmm/W+\epsilon,y_2]} \overline{u}^2(-f(s),s)<0.$$
Thus, the particle starting from $\bfy$ escapes and gets indefinitely far away from the origin. It completes the proof.\end{proof}

\begin{lem}\label{lem: 2D core in superlevel set}
Let $\overline\omg$ be any vortex dipole (Definition~\ref{def: 2d dipole}) with a traveling speed $W>0$. In the half plane $\set{\bfx\in\bbR^2:x_2>0}$, if $\overline\omg$ has the Steiner symmetry (Definition~\ref{def: 2D steiner}) with a simply-connected vortex core $\set{\bfx\in\bbR^2:x_2>0,\,\overline\omg(\bfx)>0}$, then the superlevel set 
$$\left\{\bfx\in\bbR^2:x_2>0,\,\calG[\overline\omg](\bfx)-Wx_2>0\right\}$$ also has the Steiner symmetry (Definition~\ref{def: 2D steiner}), is simply connected, and contains the vortex core $\set{\bfx\in\bbR^2:x_2>0,\,\overline\omg(\bfx)>0}$.
\end{lem}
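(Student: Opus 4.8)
The plan is to set $\Psi:=\calG[\overline\omg]-Wx_2$ and to write $U:=\set{\bfx\in\bbR^2:x_2>0,\ \Psi(\bfx)>0}$ and $C:=\set{\bfx\in\bbR^2:x_2>0,\ \overline\omg(\bfx)>0}$ for the core, then to establish the three assertions in order: $U$ has the Steiner symmetry, $U\supseteq C$, and $U$ is simply connected. The first is immediate from Lemma~\ref{lem: 2D streamline formation}: since $\calG[\overline\omg]$ has the \emph{strict} Steiner symmetry, for each fixed $x_2>0$ the map $x_1\mapsto\Psi(x_1,x_2)$ is even and strictly decreasing on $[0,\ift)$, so its $0$-superlevel set is an open interval symmetric about $x_1=0$ (possibly empty). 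Writing $g(x_2):=\Psi(0,x_2)$, this says $U=\set{\bfx:x_2\in I,\ |x_1|<l(x_2)}$ with $I:=\set{x_2>0:g(x_2)>0}$ and $l>0$ on $I$.

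The crux is $C\subseteq U$. Since $-\lap\calG[\overline\omg]=\overline\omg$ and $\lap(Wx_2)=0$, the function $\Psi$ is superharmonic on $\set{x_2>0}$ and is nonconstant on $C$ (there $-\lap\Psi=\overline\omg>0$). By the minimum principle $\inf_C\Psi=\min_{\rd C}\Psi=:\gmm_0$, and by the strong minimum principle $\Psi>\gmm_0$ throughout $C$; so it suffices to prove $\gmm_0\geq0$. Now $\overline C$ is bounded and invariant under the moving-frame flow, hence $\rd C$ is invariant; being connected --- as the boundary of a bounded, simply connected, Steiner-symmetric open set --- it is a single streamline and therefore lies in one level set $\set{\Psi=\gmm_0}$. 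Let $c_-:=\inf\set{x_2>0:(0,x_2)\in C}\geq0$; then $(0,c_-)\in\rd C$, so $\gmm_0=g(c_-)$. If $c_-=0$, then $\rd C$ meets the axis $\set{x_2=0}$, where $\Psi=\calG[\overline\omg]-0=0$, so $\gmm_0=0$. If $c_->0$, then for $x_2\in(0,c_-)$ the point $(0,x_2)$ lies outside $\overline C$, so $\calG[\overline\omg]$ is harmonic near it; by the strict Steiner symmetry $\calG[\overline\omg](\cdot,x_2)$ peaks at $x_1=0$, whence $\rd_{x_1}^2\calG[\overline\omg](0,x_2)\leq0$ and $g''(x_2)=\rd_{x_2}^2\calG[\overline\omg](0,x_2)=-\rd_{x_1}^2\calG[\overline\omg](0,x_2)\geq0$. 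Thus $g$ is convex on $(0,c_-)$ with $g(0)=0$ and $g'(0^+)=\overline u^1(0,0)-W>W>0$ by Proposition~\ref{prop: 2D central speed}, forcing $g\geq g'(0^+)x_2>0$ on $(0,c_-)$ and hence $\gmm_0=g(c_-)>0$. Either way $\gmm_0\geq0$, so $\Psi>0$ on $C$.

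For simple connectivity I would show $I=(0,R)$ for some $R\in(0,\ift)$; then $U$ is connected, and being Steiner-symmetric of the indicated form it deformation-retracts onto an interval, hence is simply connected (moreover it then has the oval shape needed in Theorem~\ref{thm: class. of 2D atmos.}, with $l(R)=0$ by continuity of $\Psi$ and strict Steiner symmetry, and $l\in C^1$ by the implicit function theorem applied to $\Psi(l(x_2),x_2)=0$ using $\rd_{x_1}\Psi<0$ there). By the previous step, $g>0$ on the interior $(c_-,c_+)$ of the core's axis range --- an interval since $C$ is connected --- and, when $c_->0$, also on $(0,c_-)$ by the convexity above; thus $g>0$ on $(0,c_+)$. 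For $x_2>c_+$ the point $(0,x_2)$ again lies outside $\overline C$, so the same Steiner/harmonic computation gives $g''\geq0$ on $(c_+,\ift)$, while $g(x_2)=\calG[\overline\omg](0,x_2)-Wx_2\to-\ift$ by Proposition~\ref{prop: decay of stream/term}; a convex function with this behavior is nonincreasing on all of $(c_+,\ift)$, so $\set{g>0}\cap(c_+,\ift)$ is an interval of the form $(c_+,R)$ (empty if $\gmm_0=0$). Splicing, $I=(0,R)$.

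The main obstacle is the assertion that $\rd C$ lies in a single level set $\set{\Psi=\gmm_0}$: it rests on (i) $\rd C$ being a connected curve, which I would deduce from the simple connectivity and Steiner symmetry of $C$, and (ii) the boundary of a bounded invariant region being itself invariant, hence a union of level curves of $\Psi$ for the planar Hamiltonian moving-frame flow, so that a connected such boundary sits at one level --- points that need some care about the regularity of $\rd C$ when $\overline\omg$ is only $L^\ift$. If one prefers to avoid this, $\gmm_0\geq0$ can instead be extracted from Lemma~\ref{lem: 2D streamline formation}: were $\Psi<0$ at some $\bfy\in\rd C$ with $y_1<0$ (available by Steiner symmetry) while $g>\Psi(\bfy)$ on the axis below $y_2$ --- which the convexity of $g$ guarantees --- then that particle would escape to infinity in the moving frame, contradicting the boundedness of the invariant set $\overline C$.
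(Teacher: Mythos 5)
Your reduction via superharmonicity is an attractive and genuinely different idea: since $-\lap\bigl(\calG[\overline\omg]-Wx_2\bigr)=\overline\omg\ge0$ on the core $C$, the minimum principle reduces everything to showing $\gmm_0:=\min_{\rd C}\bigl(\calG[\overline\omg]-Wx_2\bigr)\ge0$, and your Steiner-symmetry, convexity-below-the-core, and interval arguments (which parallel the paper's \textit{(Claim 1)}, \textit{(Claim 3)}, \textit{(Claim 4)}) are sound. The gap is in the step that carries all the weight, namely $\gmm_0\ge0$. Your primary argument asserts that $\rd C$, being connected and invariant, \emph{is a single streamline and therefore lies in one level set}. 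That inference is not valid: a connected invariant set need not be a single trajectory (it can be a union of many orbits, possibly with stagnation points, and for $\overline\omg$ merely in $L^\ift$ as in Definition~\ref{def: 2d dipole} the boundary need not even be a curve), and nothing in the hypotheses guarantees that $\calG[\overline\omg]-Wx_2$ is constant on $\rd C$. The paper neither uses nor proves such a structural statement, and making it rigorous would require a genuinely new analysis of the level-set structure of the relative stream function.

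The fallback is also incomplete as written. Lemma~\ref{lem: 2D streamline formation} requires $\calG[\overline\omg](0,s)-Ws>\gmm$ for \emph{every} $s$ in the interval $(-\gmm/W,y_2)$, and you justify this by ``the convexity of $g$.'' But $g''\ge0$ is available only where the axis point lies outside $\overline C$, i.e.\ on $(0,c_-)$; on the portion of the axis inside the core, $(c_-,y_2)$, no convexity holds, and the required lower bound there is precisely the nontrivial content of the paper's \textit{(Claim 2)}, which the paper establishes by a separate bootstrap combining Proposition~\ref{prop: 2D central speed}, the convexity statement, and Lemma~\ref{lem: 2D streamline formation} itself. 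One can try to patch your fallback with your own minimum-principle bound ($\Psi>\gmm_0$ on $C$ controls the axis inside the core, but only at the exact level $\gmm_0$), yet then $\bfy$ must be an exact minimizer of $\Psi$ on $\rd C$ with $y_1<0$, and the case in which the minimum is attained only on the symmetry axis is not covered: Lemma~\ref{lem: 2D streamline formation} needs $y_1<0$ strictly, and perturbing off the axis lowers the level and destroys the axis hypothesis. So the argument does not close as stated; the paper's order — first prove $\calG(0,s)-Ws>0$ on the axis up to the top of the core, then exclude low-level core points via the escape lemma — is exactly what circumvents these difficulties.
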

\begin{proof}
Lemma~\ref{lem: 2D streamline formation} says that the stream function $\calG=\calG[\overline\omg]$ has the \textit{strict} Steiner symmetry in the half plane $\set{\bfx\in\bbR^2:x_2>0}$. We will establish several observations related to the concavity of $\calG$ on the $x_2$-axis. Knowing that the vortex core is simply connected and has the Steiner symmetry in the half plane $\set{x_2>0}$, we can find two constants $0\leq R_1<R_2<\ift$ such that 
$$\set{s>0:\overline\omg(0,s)>0}=(R_1,R_2).$$

\medskip
\noindent$\blacktriangleright$ \textit{(Claim 1) The function $s\mapsto\rd_{x_2}\calG(0,s)$ is strictly increasing
in the intervals $[0,R_1]$ and $[R_2,\ift)$, respectively.} 

\medskip
Considering the convolution formula \eqref{eq: stream}, we note that $\calG$ is infinitely differentiable at the point $\bfx\in\overline{\set{\overline\omg\neq0}}^c$ since the function $\log|\cdot|$ in the integrand is infinitely differentiable without any singularity. In particular, for each $\bfx\in\overline{\set{\overline\omg\neq0}}^c$, we have 
\begin{equation}\label{eq1117_2}
-\lap\calG(\bfx)=\overline\omg(\bfx)=0.
\end{equation}
For each $s\in(0,\ift)\setminus[R_1,R_2]$, the Steiner symmetry of $\calG$ gives $\rd_{x_1}^2\calG(0,s)\leq 0$. Indeed, we obtain the strict inequality $\rd_{x_1}^2\calG(0,s)<0$ by observing that, from \eqref{eq1117_1}, we get 
$$\begin{aligned}
2\pi\rd_{x_1}^2\calG(0,s)&=\int_{y_2>0}\rd_{x_1}\f{4x_2y_2(y_1-x_1)}{|\bfx-\bfy|^2|\bfx-\bfy^*|^2}\Bigg|_{\bfx=(0,s)}\overline\omg(\bfy)\dd\bfy\\
&=\int_{y_2>0}\f{4sy_2\left[3y_1^4+2y_1^2(s^2+y_2^2)-(s^2-y_2^2)^2\right]}{\left[(y_1^2+(s-y_2)^2)(y_1^2+(s+y_2)^2)\right]^2}\overline\omg(\bfy)\dd\bfy\\
&=\ii{R_1}{R_2}8sy_2\underbrace{\left(\ii{0}{\ift}\f{3t^4+2t^2(s^2+y_2^2)-(s^2-y_2^2)^2}{\left[(t^2+(s-y_2)^2)(t^2+(s+y_2)^2)\right]^2}\overline\omg(t,y_2)\dd t\right)}_{(*)}\dd y_2
\end{aligned}$$
where, since the numerator $3t^4+2t^2(s^2+y_2^2)-(s^2-y_2^2)^2$ changes its sign only at 
$$t^2=-\f{a}{3}+\sqrt{\f{a^2}{9}+\f{b^2}{3}}=:T>0,\qd a=s^2+y_2^2,\q b=s^2-y_2^2,$$ we have 
$$\begin{aligned}
(*) &=\ii{0}{\ift}(\cdots)\overline\omg(t,y_2)\dd t=\ii{0}{\sqrt{T}}(\cdots)\omg(t,y_2)\dd t+\ii{\sqrt{T}}{\ift}(\cdots)\overline\omg(t,y_2)\dd t\\
&\leq \q\overline\omg(\sqrt{T},y_2)\underbrace{\ii{0}{\sqrt{T}}(\cdots)\dd t}_{<0}\q+\q
\overline\omg(\sqrt{T},y_2)\underbrace{\ii{\sqrt{T}}{\ift}(\cdots)\dd t}_{>0} =\q\overline\omg(\sqrt{T},y_2)\ii{0}{\ift}
(\cdots)\dd t\\
&=\q\overline\omg(\sqrt{T},y_2)\lim_{t\to\ift}\f{t}{(t^2+(s-y_2)^2)(t^2+(s+y_2)^2)}=0
\end{aligned}$$ (note that we have $(*)=0$ only if $\overline\omg(\cdot,y_2)\equiv0$, which is impossible). Applying the strict inequality $\rd_{x_1}^2\calG(0,s)<0$ to the relation \eqref{eq1117_2} gives us  $\rd_{x_2}^2\calG(0,s)>0$. This completes the proof of \textit{(Claim 1)}.\\

\noindent$\blacktriangleright$ \textit{(Claim 2) The set $\set{s>0:\calG(0,s)-Ws>0}$ is nonempty and contains the interval $(0,R_2)$.}

\medskip
We recall that $\rd_{x_2}\calG(0,0)>2W$ from Proposition~\ref{prop: 2D central speed} and that $\rd_{x_2}\calG(0,s)$ does not decrease in $s\in[0,R_1]$ from \textit{(Claim 1)}, which imply  that $\calG(0,s)-Ws>Ws>0$ for each $s\in[0,R_1]$. Note that the set $\set{s>0:\calG(0,s)-Ws>0}$ is nonempty in either cases of $R_1=0$ or $R_1>0$. It remains to show $$R_2\leq R:= \sup\set{x_2>0: \calG(0,s)-Ws>0\q\q\mbox{for each}\q s\in(0,x_2)}.$$
We suppose the contrary, i.e., $R<R_2.$ By the continuity of $\calG,$ we have $\calG(0,R)-WR=0$ and so $R>R_1$. In sum, we have $R\in(R_1,R_2)=\set{s>0:\overline\omg(0,s)>0}$. Since the vortex core is open, there exists a small constant $0<\eps\ll1$ such that $\overline\omg(-\eps,R)>0$. Moreover, by the \textit{strict} Steiner symmetry of $\calG$, we have $\calG(-\eps,R)-WR=:\sgm<0$. Since $\calG(0,x_2)-Wx_2>0>\sgm$ for each $x_2\in(0,R)$ by the definition of $R$, Lemma~\ref{lem: 2D streamline formation} implies that the point $(-\eps,R)$ cannot be contained in the vortex domain (and hence in the vortex core),  which is a contradiction to $\overline\omg(-\eps,R)>0$. Thus, we conclude that $R\geq R_2,$ and this completes the proof of \textit{(Claim 2)}.\\

\noindent$\blacktriangleright$ \textit{(Claim 3) We have $\rd_{x_2}\calG(0,s)<0$ for $s\geq R_2$.}

\medskip
We observe that the derivative $\rd_{x_2}\calG(0,s)$ is strictly increasing in $s\in[R_2,\ift)$ by \textit{(Claim 1)}, while $|\rd_{x_2}\calG(0,s)|\to 0$ as $s\to\ift$ since $|\nb\calG(\bfx)|\leq \f{1}{2\pi}\int_{\bbR^2}\f{1}{|\bfx-\bfx'|}|\overline\omg(\bfx)|d\bfx'=O(|\bfx|^{-1})$ as $|\bfx|\to\ift$. This means $\rd_{x_2}\calG(0,s)<0$ for $s\geq R_2$. It completes the proof of \textit{(Claim 3)}.\\

\noindent$\blacktriangleright$ \textit{(Claim 4) The superlevel set $\set{\bfx\in\bbR^2:x_2>0,\,\calG(\bfx)-Wx_2>0}$ is simply connected and has the Steiner symmetry.}

\medskip
We recall \textit{(Claim 2)} and \textit{(Claim 3)}, which are saying that the function $\calG(0,s)-Ws$ is positive for each $s\in(0,R_2)$ and decreases strictly in $s>R_2$. By the boundedness of the superlevel set $\set{\calG(\bfx)-Wx_2>0}$ (Proposition~\ref{prop: decay of stream/term}), we have $\set{s>0:\calG(0,s)-Ws>0}=(0,R)$ for some $R\in(0,\ift)$. By the Steiner symmetry of $\calG$, the superlevel set $\set{\bfx\in\bbR^2:x_2>0,\,\calG(\bfx)-Wx_2>0}$ has the Steiner symmetry and is simply connected. It completes the proof of \textit{(Claim 4)}.\\

It remains to prove that the superlevel set $$\left\{\bfx\in\bbR^2:x_2>0,\,\calG(\bfx)-Wx_2>0\right\}$$ (which is simply connected and has the Steiner symmetry by \textit{(Claim 4)}) contains the vortex core $\set{\bfx\in\bbR^2:x_2>0,\,\overline\omg>0}$. We suppose the contrary, i.e., there exists $\bfy=(y_1,y_2)\in\bbR^2$ such that $y_1<0$, $y_2\in(R_1,R_2)$ satisfying  $$\overline\omg(\bfy)>0\qd\mbox{and}\qd\calG(\bfy)-Wy_2\leq0.$$ Since the vortex core is an open set and $\calG$ has the \textit{strict} Steiner symmetry, for small $0<\eps\ll1$ and $\bfy_\eps:=(y_1-\eps,y_2)\in\set{x_1<0,x_2>0}$ we have
$$\overline\omg(\bfy_\eps)>0\qd\mbox{and}\qd\calG(\bfy_\eps)-Wy_2=:\gmm<0.$$ 
Using the result of \textit{(Claim 2)} and Lemma~\ref{lem: 2D streamline formation},  we derive that the point $\bfy_\eps$ cannot be contained in the vortex domain, which is a contradiction to $\overline\omg(\bfy_\eps)>0$. It completes the proof of Lemma~\ref{lem: 2D core in superlevel set}.

\end{proof}

Here, we give the proof of Theorem~\ref{thm: class. of 2D atmos.}, which saying that the core-containing superlevel set $\set{\calG[\overline\omg]-Wx_2>0}$ in Lemma~\ref{lem: 2D core in superlevel set} above is indeed the vortex domain.

\begin{proof}[\textbf{Proof of Theorem~\ref{thm: class. of 2D atmos.}}]
Let $\Omg\subset\bbR^2$ be the vortex domain corresponding to a given dipole $\overline\omg.$ As in the proof of Lemma~\ref{lem: 2D core in superlevel set}, the assumption about the vortex core $\set{x_2>0,\overline\omg>0}$ implies the existence of two constants $0\leq R_1<R_2<\ift$ such that 
$$\set{s>0:\overline\omg(0,s)>0}=(R_1,R_2).$$ 
For the proof, we note that the stream function $\calG=\calG[\overline\omg]$ has the \textit{strict} Steiner symmetry (Lemma~\ref{lem: 2D streamline formation}) Also, we recall \textit{(Claim 2)}, \textit{(Claim 3)}, \textit{(Claim 4)}  in the proof of Lemma~\ref{lem: 2D core in superlevel set}:
$$\begin{cases}
\text{ \textit{(Claim 2)}:\q The set $\set{s>0:\calG(0,s)-Ws>0}$ is nonempty and contains the interval $(0,R_2)$.}\\
\text{ \textit{(Claim 3)}:\q We have $\rd_{x_2}\calG(0,s)<0$ for $s\geq R_2$.}\\
\text{ \textit{(Claim 4)}:\q The superlevel set $\set{\bfx\in\bbR^2:x_2>0,\,\calG(\bfx)-Wx_2>0}$ is simply connected and has the Steiner symmetry.}
\end{cases}$$ 

To prove Theorem~\ref{thm: class. of 2D atmos.}, it suffices to show that the vortex domain 
coincides with the superlevel set, i.e., 
\begin{equation}\label{eq1031_1}
\Omg\cap\set{x_2>0}=\set{\calG-Wx_2>0}\cap\set{x_2>0}.
\end{equation} Indeed, the superlevel set is simply connected and has the Steiner symmetry by \textit{(Claim 4)}, which also implies the existence of the boundary curve $l:(0,R]\to[0,\ift)$ in the statement of Theorem~\ref{thm: class. of 2D atmos.} as desired (the differentiability of $l$ is deduced from the differentiability and the \textit{strict} Steiner symmetry of $\calG$), together with the end point value $l(0):=\lim_{x_2\downarrow0} l(x_2)\in(0,\ift)$ (it is well defined by the monotonicity of $\rd_{x_2}\calG(s,0)$ in $s\geq0$; see \cite[pg. 54-55]{CJS2025} for details) and the constant $R\in[R_2,\ift)$ satisfying 
\begin{equation}\label{eq1106_3}
\calG(0,s)-Ws>0\qd\mbox{for}\q s\in(0,R)\qd\mbox{and}\qd\calG(0,R)-WR=0
\end{equation}
Indeed, such a constant $R\in[R_2,\ift)$ exists uniquely from the facts that the function $\calG(0,s)-Ws$ is positive if $0<s<R_2$ (\textit{Claim 2}), decreases strictly in $s\geq R_2$ (\textit{Claim 3}), and tends to $-\ift$ as $s\to\ift$ (Proposition~\ref{prop: decay of stream/term})).

\medskip
We will prove \eqref{eq1031_1} by a contradiction argument. Since we have seen that superlevel set $\set{\calG-Wx_2>0}$ is contained in the vortex domain (Section~\ref{sec: streamline}), we assume that there exists a point $\bfy\in\Omg\cap\set{x_2>0}$ satisfying $\calG(\bfy)-Wy_2\leq0$. Since the vortex domain $\Omg\cap\set{x_2>0}$ is an open set and $\calG$ has the \textit{strict} Steiner symmetry, there exists a small constant $0<|\eps|\ll1$ such that we have $y_1+\eps\neq0$, $(y_1+\eps,y_2)\in\Omg\cap\set{x_2>0}$, and  $$0\geq\calG(\bfy)-Wy_2>\calG(y_1+\eps,y_2)-Wy_2=:\gmm.$$ In other words, the vortex domain $\Omg\cap\set{x_2>0}$ contains at least one point $\bfy'=(y_1',y_2')=(y_1+\varepsilon,y_2)$ such that $\calG(\bfy')-Wy_2'=\gmm<0$ and $y_1'\neq0$. In what follows, either case $y_1>0$ or $y_1<0$ will face a contradiction.

\medskip
\noindent\textbf{Case \circled{1}: $y_1'<0$}

In this case, note that \textit{(Claim 2)}, \textit{(Claim 3)}, and Lemma~\ref{lem: 2D streamline formation} say that $\bfy'$ cannot be in the vortex domain $\Omg$ in either cases of $y_2'\in(0,R_2]$ and $y_2'>R_2$, so a  contradiction. More precisely, if $y_2'\in(0, R_2]$, then due to \textit{(Claim 2)}, we can directly use Lemma \ref{lem: 2D streamline formation} to derive a contradiction. So we may assume $y_2'>R_2$. Then, by the \textit{strict} Steiner symmetry of $\calG$, we know $\calG(0,y_2')-Wy_2'>\gamma$. From \textit{(Claim 3)}, we get
$\calG(0,s)-Ws>\gamma$ for every $s\in[R_2,y_2']$, and hence for every $s\in(0,y_2']$ by \textit{(Claim 2)}. Again, Lemma \ref{lem: 2D streamline formation} gives a contradiction, so \textbf{Case \circled{1}} is impossible.\\

\noindent\textbf{Case \circled{2}: $y_1'>0$}

In this case, the idea is that we can find the trajectory starting from $\bfy'$ that intrudes the region $\set{x_1<0}$ in a finite time. Then we can follow the same argument with \textbf{Case \circled{1}}. For completeness, we give the details below.

For each $x_2>0$, we consider the equation \begin{equation}\label{eq0921_1}
\calG(x_1,x_2)-Wx_2=\gmm<0 \qd\mbox{for}\q x_1\geq0.
\end{equation} Since $\calG>0$ in $\set{x_2>0}$, the equation is not solvable unless $x_2>-\gmm/W>0.$ Moreover, even if we assume $x_2>-\gmm/W$, we note that the equation is not solvable for any $x_2>R'$, where the constant $R'>R_2$ satisfies $\calG(0,R')-WR'=\gmm$ (such constant $R'$ exists uniquely by the same reason with the case of $R$; see \eqref{eq1106_3}). Moreover, \eqref{eq0921_1} is solvable for each $x_2\in(-\gmm/W,R')$, since $\calG(0,s)-Ws>\gmm$ for each $s\in(0,R')$ from \textit{(Claim 2)} and \textit{(Claim 3)}. In sum, the equation \eqref{eq0921_1} is solvable only when $x_2\in(-\gmm/W,R')$. 

By the \textit{strict} Steiner symmetry of $\calG$, there exists a function $f:(-\gmm/W,R']\to[0,\ift)$ satisfying $f>0$ in $(-\gmm/W,R')$, $f(R')=0$, and that solution of \eqref{eq0921_1} is uniquely determined as $x_1=f(x_2)$ for $x_1\in (-\gmm/W,R']$. The differentiability of $f$ in $(-\gmm/W,R']$ is easily obtained by the differentiability and the \textit{strict} Steiner symmetry of $\calG$. From this, we observe that \begin{equation}\label{eq0921_2}
\lim_{s\downarrow-\gmm/W}f(s)=\ift.
\end{equation} Otherwise, if $\liminf_{s\downarrow-\gmm/W}f(s)\in[0,\ift)$, then for some subsequence $s_n\to -\gmm/W$, we have
$$0=\lim_{n\to\ift}Ws_n+\gmm=\lim_{n\to\ift}\calG(-f(s_n),s_n)= \calG\left(-\liminf_{s\downarrow-\gmm/W}f(s),-\gmm/W\right)>0,$$ which is a contradiction. 

We now return to the point $\bfy'=(y_1',y_2')$ that clearly solves \eqref{eq0921_1}.
We have $-\overline{u}^2=\rd_{x_1}\calG<0$ on the right side $\set{x_1>0}$ (by \eqref{eq1117_1}), implying that the particle starting at $\bfy'$ increases its $x_2$-coordinate along the curve $\set{(f(s),s):s\in(-\gmm/W,R']}$ so that it reaches the point $(0,R')$ in a finite time $T>0$, where the horizontal velocity $\overline{u}^1(0,R')=\rd_{x_2}\calG(0,R')$ is negative by \textit{(Claim 3)} (recall $R'>R_2$). More precisely, from $\overline{u}^1(0, R')<0$, there exists $\delta>0$ satisfying
\begin{equation}\label{eq: temp_u_1}
\sup_{|\bfx-(0,R')|<\delta} \overline{u}^1(\bfx)<0. 
\end{equation}
From $f(s)\to0$ as $s\uparrow R'$, there exists $\dlt'\in(0,\dlt)$ such that $f(s)<\dlt/2$ whenever $s>R'-\dlt'$. Note
$\inf_{\bfx\in L}\overline u^2(\bfx)>0$ where 
the compact set $L$ is defined by $L:=\{
(f(s),s)\,:\, y_2'\leq s \leq R'-(\dlt'/2)
\}\subset \{x_1>0, \,x_2>0\}$. Thus the trajectory starting from $\bfy'$ arrives at some point $\bfx$ satisfying $|\bfx-(0,R')|<\delta$ with $x_1>0$ and $x_2<R'$  in finite time. Then, due to \eqref{eq: temp_u_1}, the particle crosses the $x_2$-axis at some finite time from the right $\set{x_1>0}$ to the left $\set{x_1<0}$, which reminds us \textbf{Case \circled{1}} that was already negated. It completes the proof of Theorem~\ref{thm: class. of 2D atmos.}.
\end{proof}

\subsection{Sadovskii vortex}\label{sec: Sadovskii}\q

\medskip Before we move toward the case of a 3D vortex ring, we consider a special form of a 2D vortex dipole: a \textit{Sadovskii vortex}. It is a vortex dipole satisfying the Steiner symmetry in the half plane $\set{x_2>0}$ and, in particular, exhibiting \say{touching} of the counter-rotating vortex pair in the sense of
$$\set{|\bfx|<r}\subset 
\overline{\set{\overline\omg\neq0}}\qd\mbox{for some}\q r>0.
$$ We refer to \cite{CJS2025, CSW2025} for backgrounds about this. Knowing that the vortex domain of every dipole has an oval shape in $\bbR^2$ (Theorem~\ref{thm: class. of 2D atmos.}), a natural direction to find a Sadovskii vortex has been to construct an odd-symmetric dipole $\overline\omg$ satisfying 
\begin{equation}\label{eq1125_1}
\set{\overline\omg>0}=\set{\calG[\overline\omg]-Wx_2>0}\qd\mbox{in the half plane}\q \set{x_2>0}
\end{equation} (see \cite{CJS2025, CSW2025, HT2025} and the references therein) so that the entire vortex core $\set{\overline\omg\neq0}$ coincides with the oval vortex domain in $\bbR^2$, i.e., the atmosphere is empty.

\medskip
In what follows, we will verify that the atmosphere of a Sadovskii vortex is always empty, and vice versa when the core is simply connected in $\mathbb{R}^2_+$\footnote{Typical examples are Chaplygin--Lamb dipole \cite{Chap1903, Lamb1993} and the Sadovskii patch found by Huang--Tong \cite{HT2025}.}. This would imply that characterizing a Sadovskii vortex by the relation \eqref{eq1125_1} is indeed the only possible direction.

\begin{thm}\label{thm: sadovskii}
Let $\overline\omg$ be a vortex dipole (Definition~\ref{def: 2d dipole}) has a simply-connected vortex core $\set{\overline\omg>0}$ in the half plane $\set{\bfx\in\bbR^2:x_2>0}$. Then, the corresponding vortex atmosphere is empty if and only if $\overline\omg$ is a Sadovskii vortex:
$$\set{\bfx\in\bbR^2: |\bfx|<r}\subset\overline{\set{\overline\omg\neq0}}\qd\mbox{for some}\q r>0.$$ 
\end{thm}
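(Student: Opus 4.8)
The plan is to read off everything from the characterisation already proved in Theorem~\ref{thm: class. of 2D atmos.}. Write $\calG:=\calG[\overline\omg]$ and $\phi:=\calG-Wx_2$ on $\set{x_2>0}$. By Theorem~\ref{thm: class. of 2D atmos.} the vortex domain $\Omg$ satisfies $\Omg\cap\set{x_2>0}=\Omg^+:=\set{\phi>0}\cap\set{x_2>0}$, and $\Omg^+$ is an oval $\set{0<x_2<R,\ \abs{x_1}<l(x_2)}$ with $l(0):=\lim_{x_2\downarrow0}l(x_2)\in(0,\ift)$; moreover $\calG$ (hence $\phi$) has the \emph{strict} Steiner symmetry (Lemma~\ref{lem: 2D streamline formation}), $\rd_{x_1}\calG<0$ on $\set{x_1>0,\,x_2>0}$ (the computation \eqref{eq1117_1}), and $-\lap\phi=\overline\omg\geq0$ on $\set{x_2>0}$ so $\phi$ is superharmonic there. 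Since $\overline{\set{\overline\omg\neq0}}\cap\set{x_2>0}=\overline{\set{\overline\omg>0,\,x_2>0}}=:\overline V$ by odd symmetry, and $\Omg$ is symmetric about $\set{x_2=0}$, the statement \say{atmosphere empty} is equivalent (the atmosphere being open) to $\Omg^+\subseteq\overline V$. For the easy direction ($\Leftarrow$): if the atmosphere is empty then $\Omg^+\subseteq\overline{\set{\overline\omg\neq0}}$; since $l$ is continuous with $l(0)>0$, $\Omg^+$ contains a half-disk $\set{\abs\bfx<\rho,\,x_2>0}$, hence so does $\overline{\set{\overline\omg\neq0}}$; by the odd symmetry of $\overline\omg$ the reflected half-disk lies there too, and taking limits from $\set{x_2>0}$ the segment $\set{\abs{x_1}<\rho,\,x_2=0}$ does as well, so $\set{\abs\bfx<\rho}\subseteq\overline{\set{\overline\omg\neq0}}$: $\overline\omg$ is Sadovskii.

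For the substantive direction ($\Rightarrow$), assume $\set{\abs\bfx<r}\subseteq\overline{\set{\overline\omg\neq0}}$; since the origin is fixed by the reflection, $0\in\overline V$. Suppose, for contradiction, the atmosphere is nonempty. Then there is a point of $\Omg^+$ near which $\overline\omg\equiv0$, and since $\Omg^+\setminus\overline V$ is open and symmetric in $x_1$ we may take such a point $\bfy=(y_1,y_2)$ with $y_1>0$; set $\gmm:=\phi(\bfy)>0$. Consider the streamline $\Gmm$ through $\bfy$ in the moving frame: it is bounded (as $\bfy$ lies in the vortex domain), lies in the bounded set $\set{\phi=\gmm}\cap\set{x_2>0}$ (bounded because $\gmm>0$, by Proposition~\ref{prop: decay of stream/term}), and carries the constant vorticity value $\overline\omg(\bfy)=0$. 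Because the vertical relative velocity $-\rd_{x_1}\calG$ is positive on $\set{x_1>0,\,x_2>0}$ and, by the evenness of $\calG$, negative on $\set{x_1<0,\,x_2>0}$, transport in each half-plane is monotone in $x_2$; arguing exactly as in the trajectory analysis of Case~\circled{2} in the proof of Theorem~\ref{thm: class. of 2D atmos.} (and invoking \textit{(Claim~3)} there to exclude equilibria on the $x_2$-axis above the core), one concludes that $\Gmm$ is a simple closed curve, symmetric about the $x_2$-axis and meeting it in exactly two points.

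Let $D$ be the Jordan domain enclosed by $\Gmm$. Since $\Gmm\subseteq\Omg^+$ and $\Omg^+$ is simply connected, $D\subseteq\Omg^+$, so $\Omg^+\setminus\Gmm$ has exactly the two components $D$ and $\Omg^+\setminus\overline D$; the vortex core, being connected and disjoint from $\Gmm$ (where $\overline\omg=0$, while $\overline\omg>0$ on the core), lies in one of them. If it lay in $\Omg^+\setminus\overline D$, then $D$ would be contained in the atmosphere $\Omg^+\setminus\overline V$, hence $\overline\omg\equiv0$ on $D$ and $\phi$ would be harmonic on $D$ with $\phi\equiv\gmm$ on $\rd D=\Gmm$, so $\phi\equiv\gmm$ on $D$; but the strict Steiner symmetry of $\phi$ forces $\set{\phi=\gmm}$ to meet each horizontal line in at most two points, so this set has empty interior, contradicting $D\neq\emptyset$. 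Hence the core lies in $D$, so $\overline V\subseteq\overline D$. On the other hand $\phi$ is superharmonic on $\set{x_2>0}\supseteq\overline D$ and equals $\gmm$ on $\rd D$, so $\phi\geq\gmm$ on $\overline D$ by the minimum principle; since $\phi(0,0)=0<\gmm$ and $\phi$ is continuous, a neighbourhood of the origin avoids $\set{\phi\geq\gmm}\supseteq\overline D$, so $0\notin\overline D\supseteq\overline V$, contradicting $0\in\overline V$. Therefore the atmosphere is empty (the half-plane $\set{x_2<0}$ by odd symmetry, $\set{x_2=0}$ being negligible).

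The main obstacle is the purely dynamical/topological claim that the bounded streamline $\Gmm$ is a Jordan curve bounding a subdomain of the simply connected set $\Omg^+$: this is where the sign of $\rd_{x_1}\calG$ together with the sign of $\rd_{x_2}\calG(0,\cdot)$ above the core (\textit{(Claim~1)}, \textit{(Claim~3)}) must be combined to get monotone transport in each half and to rule out degenerate behaviour such as an orbit spiralling toward an axis equilibrium — precisely the kind of argument already carried out for Theorem~\ref{thm: class. of 2D atmos.}. Once $\Gmm$ has this structure, the rest is a short consequence of the superharmonicity of $\phi$, the strict Steiner symmetry, and the oval description of $\Omg^+$.
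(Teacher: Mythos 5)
Your easy direction (empty atmosphere $\Rightarrow$ Sadovskii) is fine and is essentially the paper's. The problem is the substantive direction, where the entire dynamical content is deferred to the claim that the bounded streamline $\Gmm$ through an atmosphere point $\bfy$ with $\phi(\bfy)=\gmm>0$ is a simple closed curve meeting the $x_2$-axis in exactly two points, ``arguing exactly as in Case~\circled{2}'' of Theorem~\ref{thm: class. of 2D atmos.}. That reference does not deliver this. First, Case~\circled{2} works on a \emph{negative} level set, whose right-half branch is a global graph terminating at a point $(0,R')$ with $R'>R_2$ where \textit{(Claim~3)} applies; on your positive level set the admissible axis heights are different, and \textit{(Claim~3)} (together with \textit{(Claim~1)} and Proposition~\ref{prop: 2D central speed}) only rules out relative equilibria on the axis for $s\le R_1$ and $s\ge R_2$. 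To exclude the orbit limiting onto a stagnation point at a height $s\in(R_1,R_2)$ (such stagnation points certainly exist inside the core region), or entering the core at all, you need the invariance of the atmosphere, i.e.\ of $\supp\overline\omg$, under the relative flow — an ingredient you never state, and which is also what justifies your assertion that $\Gmm$ is disjoint from the core ($\overline\omg$ is only $L^\ift$, so ``carries the constant vorticity value $\overline\omg(\bfy)=0$'' is not a legitimate pointwise statement).

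Second, and more seriously: even granting monotone transport in each quarter, a closed orbit symmetric in $x_1$ must cross the axis once right-to-left (possible only where $\rd_{x_2}\calG(0,s)<W$, hence $s\ge R_2$) and once left-to-right (possible only where $\rd_{x_2}\calG(0,s)>W$, hence $s\le R_1$, since crossing heights in $(R_1,R_2)$ are interior core points unavailable to an atmosphere orbit). But the Sadovskii hypothesis forces $R_1=0$ (by Steiner symmetry, core points arbitrarily close to $(0,\eps)$ give $\overline\omg(0,\eps)>0$), so there is \emph{no} admissible return crossing: once the orbit enters $\set{x_1<0}$ it can neither re-cross the axis, nor reach $\set{x_2=0}$ (the level set $\set{\phi=\gmm}$, $\gmm>0$, stays at heights bounded below), nor converge to an equilibrium. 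In other words, the honest version of the trajectory analysis you invoke terminates in a contradiction \emph{before} producing the Jordan curve; the curve you build your harmonicity/superharmonicity endgame on cannot be established by the route you indicate, and that endgame is then superfluous. This is exactly where the paper puts the work: its \textit{(Claim~1)}/\textit{(Claim~2)} pin a hypothetical atmosphere particle between the core boundary and the domain boundary, let it drift down to the $x_1$-axis, and contradict conservation of $\calG-Wx_2$ using $\calG=0$ on $\set{x_2=0}$. Your plan could be repaired by replacing the Jordan-curve step with precisely such a direct trajectory argument (plus the invariance of the atmosphere), but as written the central step is unproved and the cited argument does not prove it.
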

\begin{proof}

Let $\overline\omg$ be a vortex dipole having the traveling speed $W>0$. We first recall Theorem~\ref{thm: class. of 2D atmos.}, which saying that the vortex domain $\Omg$ has an oval shape in that, for a constant $R>0$ and a continuous $l:[0,R]\to[0,\ift)$ satisfying $l>0$ in $[0,R)$ and $l(R)=0$, we have
$$\Omg\cap\set{x_2>0}=\set{\bfx\in\bbR^2:x_2\in(0,R),\, |x_1|<l(x_2)}.$$ Hence, it is obvious that, assuming that the vortex core is simply connected in the half plane $\set{x_2>0}$, the empty atmosphere ((core)$\equiv$(vortex domain)) always implies that $\overline\omg$ is a Sadovskii vortex. 

\medskip
For the other direction, we now assume that $\overline\omg$ is a Sadovskii vortex. It suffices to show that the atmosphere is empty, i.e., the core coincides with the vortex domain $\Omg$. Knowing that the vortex core in the half plane $\set{x_2>0}$ is simply connected, has the Steiner symmetry, and contains a ball $\set{|\bfx|<r}$ for some $r>0$, we can find a constant $R'\geq r$ and a function $\tld{l}:[0,R')\to[0,\ift)$ satisfying $\tld{l}>0$ in $[0,R')$ and 
$$\set{\overline\omg>0}=\set{\bfx\in\bbR^2:x_2\in(0,R'),\, |x_1|<\tld{l}(x_2)}$$ (at this stage, the continuity of $\tld{l}$ and the end-point value $\tld l(R')$ are unknown, but later it will turn out to be true by showing that $R=R'$ and $l\equiv\tld l$). We also bring \textit{(Claim 2)} in the proof of Lemma~\ref{lem: 2D core in superlevel set} in here: the set $\set{s>0:\calG(0,s)-Ws>0}=(0,R)$ is nonempty and contains the interval $(0,R')$, where $\calG=\calG[\overline\omg]$ is the stream function. This implies $R\geq R'$.\\

\noindent$\blacktriangleright$\textit{(Claim 1) We have $l\equiv\tld l$ in the interval $(0,R')$.}

\medskip 
We clearly have $l\geq \tld{l}$, since the vortex domain must contain the vortex core. Suppose there exist $y_2\in (0,R')$ such that $l(y_2)>\tld{l}(y_2)$, and consider the point 
$$\bfy=\left(-\f{l(y_2)+\tld{l}(y_2)}{2}, y_2\right).$$ Observe that the vortex domain contains $\bfy$, but the closure of the core does not. So for all time $t>0$, the particle initially at $\bfy$ must stay in the vortex domain and, at the same time, not in the core. Since the point $\bfy$ is in the left side $\set{x_1<0}$ where the vertical velocity $\overline u^2=-\rd_{x_1}\calG$ is negative (due to the \textit{strict} Steiner symmetry of $\calG$), the particle initially at $\bfy$ must stay in the set 
$$\left\{\bfx\in\bbR^2:0<x_2\leq y_2,\, x_1\in (-l(x_2),-\tld l(x_2))\right\},$$ where the interval
$(-l(x_2),-\tld l(x_2))$ should be non-empty for each $0<x_2\leq y_2$.
Hence, the particle must be heading downward for all time without any standstill (see the proof of Lemma~\ref{lem: 2D core in superlevel set} for a similar argument). It implies that the particle goes down forever to meet $x_1$-axis at some point $(-L,0)$ 
at time infinity satisfying the relation $$0<\tld{l}(0)\leq L\leq l(0).$$ As every particle trajectory conserves the value of $\calG-Wx_2$, we obtain the relation
$$\calG(-L,0)-W\cdot0=\calG(\bfy)-Wy_2.$$ Here, a contradiction arises. The right-hand side is positive since $\bfy$ is contained in the vortex domain $\Omg\cap\set{x_2>0}$ given as the superlevel set $\set{\calG-Wx_2>0}\cap\set{x_2>0}$ (Theorem~\ref{thm: class. of 2D atmos.}), while the left-hand side is clearly $0$ due to the odd-symmetry of $\calG$ (see \eqref{eq1125_3} in the proof of Lemma~\ref{lem: 2D streamline formation}). It completes the proof of \textit{(Claim 1)}.\\

\noindent$\blacktriangleright$\textit{(Claim 2) We have $l(R')=0$.}

\medskip
We suppose the contrary, i.e., assume $l(R')>0$. From the result of \textit{(Claim 1)}, we have $R>R'$. Moreover, we may define $\tld l(R'):=l(R')$ and consider the extension of $\tld l$ such that $\tld l\equiv0$ in $(R',R)$, since the vortex core is contained in $\set{x_2<R'}$. Then, in the interval $(R,R')$, we have $l>0$ and $\tld l\equiv0$. Consider any point $\bfz$ in the nonempty set $$\calA:=\set{\bfx\in\bbR^2:x_2\in(R',R),\,x_1\in(-l(x_2),0)}.$$ Since $\bfz$ is in the vortex domain and not in the closure of the core, the particle initially at $\bfz$ must be staying in the set $\calA$. Here, the reason that the particle cannot cross the $x_2$-axis in a finite time is that the horizontal velocity is negative on the line segment $\set{x_1=0,x_2\in[R',R)}$:
\begin{equation}\label{eq1126_1}
\overline u^1(0,s)=\rd_{x_2}\calG(0,s)<0\qd\mbox{for each}\q s\in[R',R)
\end{equation}
(recall \textit{(Claim 3)} in the proof of Lemma~\ref{lem: 2D core in superlevel set}). Since $\calA$ is in the left side $\set{x_1<0}$ where the vertical velocity $\overline u^2=-\rd_{x_1}\calG$ is negative (due to the \textit{strict} Steiner symmetry of $\calG$), the particle must be heading downward for all time without any standstill, which means it needs to escape from $\calA$ by crossing the line segment $$\calL:=\set{x_1\in[-l(R'),0],\,x_2=R'}$$ in a finite time, so a contradiction. This completes the proof of \textit{(Claim 2)}.\\

From \textit{(Claim 1)} and \textit{(Claim 2)}, we conclude that $R=R'$ and $l\equiv\tld l$ in the interval $(0,R]$. We may redefine $\tld l(0)=l(0)$, if necessary. It means the vortex core $\set{\overline\omg>0}$ coincides with the vortex domain $\Omg\cap\set{x_2>0}$, and hence the atmosphere is empty. It completes the proof of Theorem~\ref{thm: sadovskii}. \end{proof}

\subsection{Ring with simply-connected core}\q

\medskip
 Unlike the 2D case (Theorem~\ref{thm: class. of 2D atmos.}), the 3D setting (vortex rings)  admits two additional types of vortex domains.
\begin{thm}\label{thm: class. of 3D atmos.}
Let $\overline\xi$ be any vortex ring (Definition~\ref{def: 3D ring}) with a traveling speed $W>0$. In the $(z,r)$-plane $\Pi$, if $\overline\xi$ has the Steiner symmetry (Definition~\ref{def: 3D steiner}) with a simply-connected vortex core, then the corresponding vortex domain $\Omg$ also has the Steiner symmetry and is simply connected in $\Pi$. Moreover, for the $z$-component $\overline{v}^z=\rd_r\psi[\overline\xi]/r$ of the velocity $\overline\bfv$, one of the following holds:

\medskip
{\rm\textbf{[Case I]}} \say{$\overline{v}^z(0,0)\geq W$}: The vortex domain is given by the superlevel set $$\Omg=\left\{(z,r)\in\Pi:
\psi[\overline\xi](z,r)-Wr^2/2>0\right\}.$$ Moreover, for some constant $R\in(0,\ift)$ and a function $l:[0,R]\to[0,\ift)$ satisfying $l>0$ in $(0,R)$, $l(R)=0$, and $l\in C([0,R])\cap C^1((0,R))$, the vortex domain is given as
$$\Omg=\left\{(z,r)\in\Pi:r\in(0,R),\,|z|<l(r)\right\}.$$ If $\overline{v}^z(0,0)>W$, we have $l(0)\in(0,\ift)$, implying the \say{spheroidal shape} of the above vortex domain in $\bbR^3$. If $\overline{v}^z(0,0)=W$, we have $l(0)=0$, implying the \say{revolved-lemniscate shape} of the above vortex domain in $\bbR^3$.

\medskip
{\rm\textbf{[Case II]}} \say{$\overline{v}^z(0,0)<W$}: The vortex domain $\Omg$ satisfies $\Omg\supsetneq\set{(z,r)\in\Pi:\psi[\overline\xi](z,r)-Wr^2/2>0}$ and 
is given by the superlevel set $$\Omg=\left\{(z,r)\in\Pi:\psi[\overline\xi](z,r)-Wr^2/2>\gmm,\q r>L\right\}$$ for some constants $\gmm<0$ and $L>0$. In particular, we have $\psi[\overline\xi](0,L)-WL^2/2=\gmm$, and the map
$r\mapsto\overline{v}^z(0,r)$ strictly increases in the interval $(0,L)$ to reach $$\overline{v}^z(0,L)=W.$$
Moreover, the vortex domain has a \say{toroidal shape} in the sense that, for some constant $R\in(L,\ift)$ and a function $l:[L,R]\to[0,\ift)$ satisfying $l>0$ in $(L,R)$, $l(L)=l(R)=0$, and $l\in C([L,R])\cap C^1((L,R))$, the vortex domain $\Omg$ is given by 
$$\Omg=\left\{(z,r)\in\Pi:r\in(L,R),\,|z|<l(r)\right\}.$$ See Figure~\ref{fig: 3D donut} for a simple illustration.
\end{thm}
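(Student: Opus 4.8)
The plan is to run the argument of the two-dimensional Theorem~\ref{thm: class. of 2D atmos.} with the $z$-axis $\{r=0\}$ playing the role of the symmetry axis $\{x_1=0\}$, and to graft onto it the trichotomy forced by the center speed $\overline{v}^z(0,0)$. Throughout write $\phi:=\psi[\overline\xi]-Wr^2/2$ for the relative stream function in the co-moving frame; co-moving trajectories follow the level curves of $\phi$ in $\Pi$. I would first record the preliminaries: $\psi[\overline\xi]>0$ in $\Pi$ with $\phi(z,0)=0$ on the axis; $\phi$ has the \emph{strict} Steiner symmetry in $z$ (the kernel pairing in Step~II of the proof of Theorem~\ref{thm: 3D donut} used only Steiner symmetry of $\overline\xi$, so it gives $\partial_z\psi[\overline\xi]<0$ in $\{z>0\}$ here as well), hence $\overline{v}^r=-\partial_z\psi[\overline\xi]/r$ is negative in $\{z<0\}$ and positive in $\{z>0\}$; and, by Proposition~\ref{prop: decay of stream/term}, $\phi(z,r)\to-Wr^2/2$ as $|z|\to\infty$ with $r$ fixed, so every level set $\{\phi=\gamma\}$ with $\gamma<0$ is asymptotic to $r=\sqrt{-2\gamma/W}$ as $z\to\pm\infty$. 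The 3D analogue of Lemma~\ref{lem: 2D streamline formation}, the escape lemma (Lemma~\ref{lem: 3D streamline formation}), then asserts: a point $(z_0,r_0)\in\Pi$ with $z_0\neq0$ lying on an \emph{unbounded} connected component of a level set $\{\phi=\gamma\}$, $\gamma<0$ (unboundedness being guaranteed by a non-reconnection hypothesis on $\phi(0,\cdot)$ analogous to the one in the 2D lemma), is not in the vortex domain — because in $\{z<0\}$ the particle slides along the component with $r$ strictly decreasing toward the asymptote, while its co-moving speed $\overline{v}^z-W$ is bounded away from $0$ on compact subarcs and tends to $-W$ at infinity.

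Next I would carry out the on-axis analysis, the 3D counterpart of Lemma~\ref{lem: 3D core in superlevel set}. With $E^2:=\partial_z^2+\partial_r^2-\tfrac1r\partial_r$ one has $E^2\psi[\overline\xi]=-r^2\overline\xi$, hence $E^2\psi[\overline\xi]=0$ off the core; using $\partial_r^2\psi-\tfrac1r\partial_r\psi=r\,\partial_r\overline{v}^z$ this gives the identity $\partial_r\overline{v}^z(0,r)=-\tfrac1r\,\partial_z^2\psi[\overline\xi](0,r)$ for $r\notin[R_1,R_2]$, where $(R_1,R_2)=\{s>0:\overline\xi(0,s)>0\}$. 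A kernel-sign computation for the axisymmetric Green's function — the analogue of the one proving $\partial_{x_1}^2\calG(0,s)<0$ in two dimensions, but carried out on the elliptic integral of Step~II of Theorem~\ref{thm: 3D donut} — yields $\partial_z^2\psi[\overline\xi](0,r)<0$ there, so $r\mapsto\overline{v}^z(0,r)$ is \emph{strictly increasing} on $(0,R_1)$ and on $(R_2,\infty)$; since $\overline{v}^z(0,r)\to0$ as $r\to\infty$, this forces $\overline{v}^z(0,r)<0$ on $(R_2,\infty)$. Combining with the escape lemma exactly as for 2D Claim~2 shows that $\{\phi>0\}\cap\Pi$ contains the core, has the Steiner symmetry, and is simply connected. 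I would also record the mechanism behind the trichotomy: by strict Steiner symmetry every closed co-moving orbit in $\Pi$ is symmetric in $z$ and meets $\{z=0\}$ at two radii, and at the \emph{inner} crossing radius $r_{\min}$ the velocity is horizontal and directed into $\{z>0\}$, so $\overline{v}^z(0,r_{\min})>W$.

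Then I split on $\overline{v}^z(0,0)$. If $\overline{v}^z(0,0)\ge W$, monotonicity gives $\overline{v}^z(0,r)>W$ on $(0,R_1)$, so $g(r):=\phi(0,r)$ has $g'(r)=r(\overline{v}^z(0,r)-W)>0$ and $g>0$ there; with the core-containment, $g>0$ on $(0,R_2)$; on $(R_2,\infty)$, $g'<0$ and $g\to-\infty$, so $g$ has a unique zero $R$. Hence $\{\phi>0\}\cap\Pi=\{r\in(0,R),\,|z|<l(r)\}$ with $l(r)=\sup\{z>0:\phi(z,r)>0\}$; moreover $\phi$ has no critical value below $0$ (its maximum, attained at the in-core center, is positive, and by the on-axis analysis there is no on-axis saddle below level $0$), so together with the asymptotic structure above every component of every $\{\phi=\gamma\}$, $\gamma<0$, is unbounded; the escape lemma (after a finite-time crossing of $\{z=0\}$ if the test point starts in $\{z>0\}$, as in Case~\circled{2} of the proof of Theorem~\ref{thm: class. of 2D atmos.}) then excludes any point with $\phi\le0$ from the vortex domain, so $\Omega=\{\phi>0\}$. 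The spheroid/revolved-lemniscate dichotomy comes from the near-axis expansion $\phi(z,r)=\tfrac12(\overline{v}^z(z,0)-W)r^2+O(r^4)$ (uniform on $z$-compacts off the core, from the expansion of $\psi[\overline\xi]$ in even powers of $r$), which gives $l(0)=\lim_{r\downarrow0}l(r)=\sup\{z\ge0:\overline{v}^z(z,0)>W\}$: positive when $\overline{v}^z(0,0)>W$, and $0$ when $\overline{v}^z(0,0)=W$ (the latter using that $z=0$ is then a strict local maximum of $\overline{v}^z(\cdot,0)$, which itself follows from $\partial_z^2\psi[\overline\xi](0,r)<0$). If instead $\overline{v}^z(0,0)<W$, then $g$ decreases from $0$ near $r=0$; since the core is a bounded invariant set carrying (when the atmosphere is nonempty) closed orbits encircling it, the inner-crossing fact together with the strict monotonicity of $\overline{v}^z(0,\cdot)$ on $(0,R_1)$ forces $\overline{v}^z(0,\cdot)$ to reach the value $W$ at a unique $L\in(0,R_1]$; put $\gamma:=g(L)<0$. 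One checks $\sqrt{-2\gamma/W}<L$ (using $\psi[\overline\xi](0,L)>0$), so $(0,L)$ is a saddle of $\phi$ and $\{\phi>\gamma\}\cap\Pi$ splits into an unbounded axis-hugging component — excluded from $\Omega$ by the escape lemma — and a bounded toroidal component $\{r\in(L,R),\,|z|<l(r)\}$ around the core; the same maximality-versus-escape argument identifies $\Omega$ with the toroidal component, i.e.\ $\Omega=\{\phi>\gamma,\ r>L\}$, with $l(L)=l(R)=0$.

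In every case, the Steiner symmetry and simple connectedness of $\Omega$ in $\Pi$ are immediate from the explicit superlevel-set description and the strict Steiner symmetry of $\phi$, and the regularity of $l$ follows from the implicit function theorem applied to $\{\phi=\mathrm{const}\}$ wherever $\partial_r\phi\neq0$ (guaranteed by the strict monotonicity of $\overline{v}^z(0,\cdot)$), exactly as at the end of the proof of Theorem~\ref{thm: class. of 2D atmos.}. I expect the main obstacle to be the bookkeeping in Case~II: unlike in two dimensions, \emph{bounded} level curves at negative levels genuinely occur there — they constitute the toroidal atmosphere — so one must carefully separate the bounded from the unbounded components of $\{\phi=\gamma\}$. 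This rests on the fact that $\phi$ has no bulk local minimum in $\Pi$ (its only critical points there being the in-core center, a local maximum, and the on-axis saddle $(0,L)$), together with the precise location $\sqrt{-2\gamma/W}<L$ of the asymptote inside the torus hole. A secondary difficulty is the strict sign $\partial_z^2\psi[\overline\xi](0,r)<0$, which requires redoing the planar kernel computation for the (elliptic-integral) axisymmetric Green's function.
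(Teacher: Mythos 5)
Your outline follows the paper's skeleton: strict Steiner symmetry of $\psi[\overline\xi]$, the escape lemma (Lemma~\ref{lem: 3D streamline formation}), the on-axis sign $\rd_z^2\psi[\overline\xi](0,r)<0$ off the core giving strict monotonicity of $\overline{v}^z(0,\cdot)$ on $(0,R_1)$ and $(R_2,\ift)$ and its negativity beyond $R_2$, and the identification of $\Omega$ with a superlevel set through the two-case trajectory argument ($z<0$: escape; $z>0$: finite-time crossing of the axis). Two of your justifications, however, do not suffice as stated. For the spheroid/lemniscate dichotomy you need the \emph{global} strict decrease of $z\mapsto\overline{v}^z(z,0)$ on $z\geq0$: only this pins down $l(0)=\lim_{r\downarrow0}l(r)$ and gives $l(0)=0$ exactly when $\overline{v}^z(0,0)=W$. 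A strict local maximum at $z=0$ is not enough (nothing you prove excludes $\overline{v}^z(z,0)>W$ again at larger $z$, which would make $\sup\{z:\overline{v}^z(z,0)>W\}>0$), and even the local claim does not follow from $\rd_z^2\psi[\overline\xi](0,r)<0$ without interchanging the limit $r\to0$ with two $z$-derivatives and without $R_1>0$; the paper instead writes $\overline{v}^z(z,0)=\lim_{r\to0}2\psi(z,r)/r^2$ as an explicit integral against $\overline\xi$ and shows $\rd_z\overline{v}^z(z,0)<0$ for $z>0$ by a separate kernel-sign computation. Also, your aside that absence of critical values below $0$ forces every component of $\{\phi=\gmm\}$, $\gmm<0$, to be unbounded is not a valid implication: a closed component at a negative level can encircle a positive maximum---that is precisely the toroidal boundary in Case II---so in Case I one must argue through the axis (a closed component would meet $\{z=0\}$ at two radii beyond $R$ with equal negative values, contradicting the strict decrease of $\phi(0,\cdot)$ there) or rely, as you ultimately do, on the crossing-plus-escape argument.

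The larger gaps are in Case II. Your earlier blanket claim that $\{\phi>0\}$ contains the core is only available under $\overline{v}^z(0,0)\geq W$ (it is the analogue of the 2D Claim~2, which uses the center-speed bound); in Case II the paper only proves core $\subset\{\phi>\gmm\}\cap\{r>L\}$. More importantly, your existence proof for $L$ rests on ``closed orbits encircling the core'' and the inner-crossing fact, qualified by ``when the atmosphere is nonempty''; neither the closed orbits nor the nonemptiness is established at that point (the latter is part of the conclusion), so the step is circular. The argument that works, and is within your toolkit, applies the escape lemma directly to core points: if $\overline{v}^z(0,s)<W$ persisted for all $s$ up to some $L'>R_1$, then $\phi(0,\cdot)$ would be negative and strictly decreasing on $(0,L')$, so a core point $(z',r')$ with $z'<0$, $r'\in(R_1,L')$ would be expelled from the domain, contradicting core $\subset\Omega$; this yields $L\leq R_1$ with $\overline{v}^z(0,L)=W$. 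Beyond that you still owe the statements you assert without proof: $\phi(0,s)>\gmm$ on $(L,R_2)$ and core $\subset\{\phi>\gmm\}\cap\{r>L\}$ (again escape-lemma arguments, with the on-axis lower bound checked on all of $(0,r')$); boundedness of that component (decay of $\psi/r^2$ together with $W/2+\gmm/L^2=\psi(0,L)/L^2>0$); its forward invariance, hence inclusion in $\Omega$, which uses $\psi(s,L)-WL^2/2\leq\gmm$ for \emph{every} $s\in\bbR$ (Steiner symmetry in $z$), not only $s=0$; and the exclusion from $\Omega$ of points with $r\leq L$ or $\phi\leq\gmm$, which needs the full two-subcase analysis (levels in $(\gmm,0)$ reach the axis at some $(0,L')$ with $L'<L$ where $\overline{v}^z<W$; levels below $\gmm$ reach the axis beyond $R$ where $\overline{v}^z<0$), not the escape lemma alone. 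These are all repairable with the methods you already invoke, but as written they constitute genuine gaps.
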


\begin{figure}[t]
\centering
\includegraphics[width=0.7\linewidth]{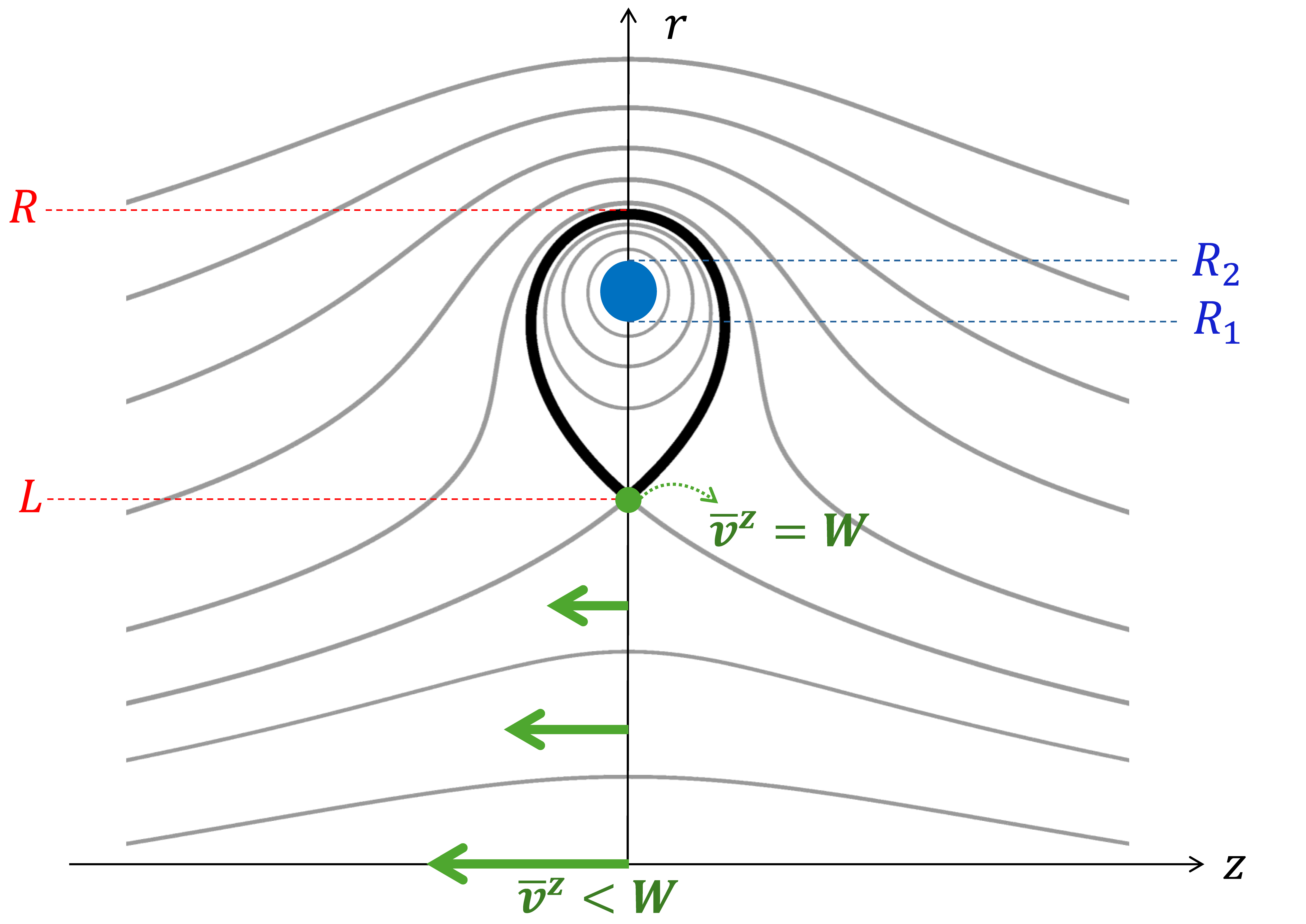}
\caption{The figure describes   \textbf{[Case II]}-Theorem~\ref{thm: class. of 3D atmos.} in the $(z,r)$-plane $\Pi$. The black solid line shows the boundary of the toroidal shape vortex domain $\Omg$, intersecting with the $r$-axis at two edge points $(0,L)$ and $(0,R)$. Each green arrow denotes the (relative) velocity $\overline\bfv-(W,0)$  on the line segment $\set{z=0,\, 0\leq r\leq L}$. Its $z$-component is negative at the origin and strictly increases its value along the $r$-axis to reach $0$ at the point $(0,L)$. The vortex core, which is filled with the blue color, is contained in $\Omg$, having two edge points $(0,R_1)$ and $(0,R_2)$ satisfying $L\leq R_1<R_2\leq R$. 
} 
\label{fig: 3D donut}
\end{figure}

The proof of Theorem~\ref{thm: class. of 3D atmos.} requires following two lemmas, Lemma~\ref{lem: 3D streamline formation} and Lemma~\ref{lem: 3D core in superlevel set}. This subsection will be completed with the proof of Theorem~\ref{thm: class. of 3D atmos.}. 

\begin{lem}\label{lem: 3D streamline formation}
Let $\overline\xi$ be any vortex ring (Definition~\ref{def: 3D ring}) with a traveling speed $W>0$. In the $(z,r)$-plane $\Pi$, if $\overline\xi$ has the Steiner symmetry (Definition~\ref{def: 3D steiner}), then its stream function $\psi[\overline\xi]$ has the \textit{strict} Steiner symmetry. Moreover, any point $(z',r')\in\set{(z,r)\in\Pi:z<0}$ satisfying $$\psi[\overline\xi](z',r')-Wr'^2/2=:\gmm'<0\qd\mbox{and}\qd 
\psi[\overline\xi](0,s)-Ws^2/2>\gmm'\qd\mbox{for each}\q 
s\in(\sqrt{-2\gmm'/W},r')
$$ is not contained in the corresponding vortex domain. See Figure~\ref{fig: 3D streamline formation} for a simple illustration for this.
\end{lem}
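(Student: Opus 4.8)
The plan is to mirror the proof of Lemma~\ref{lem: 2D streamline formation} in the axisymmetric setting, with the relative stream function $\psi[\overline\xi]-Wr^2/2$ playing the role that $\calG[\overline\omg]-Wx_2$ played in two dimensions and the level sets $\Gmm_3(\gmm')$ carrying the moving-frame streamlines. Throughout, particle trajectories are read in the moving frame, where the velocity of the particle at $(z,r)$ is $\overline\bfv-W\bfe_z$.

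First I would record the \textit{strict} Steiner symmetry of $\psi=\psi[\overline\xi]$. The even symmetry $\psi(z,r)=\psi(-z,r)$ follows from the axisymmetric Biot--Savart formula together with the even-in-$z$ symmetry of $\overline\xi$, while the strict monotonicity $\rd_z\psi(z,r)<0$ for $z>0$ is exactly the computation performed in \textbf{Step II} of the proof of Theorem~\ref{thm: 3D donut}, which uses only the Steiner symmetry of the vorticity. Differentiating $\overline{v}^r=-\rd_z\psi/r$ then yields $\overline{v}^r<0$ throughout $\set{(z,r)\in\Pi:z<0}$; this sign, the analogue of $\overline u^2<0$ on $\set{x_1<0}$, is what drives the particle outward.

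Next, given $(z',r')$ with $z'<0$, $\gmm':=\psi(z',r')-Wr'^2/2<0$, and $\psi(0,s)-Ws^2/2>\gmm'$ for $s\in(\sqrt{-2\gmm'/W},r')$, I would construct an unbounded streamline in $\Gmm_3(\gmm')$ through $(z',r')$. For each fixed $r$ in that interval, strict Steiner symmetry makes $z\mapsto\psi(z,r)$ strictly increasing on $(-\ift,0]$, from its limit $0$ as $z\to-\ift$ (positivity of $\psi$ in $\Pi$ together with the decay in Proposition~\ref{prop: decay of stream/term}) up to $\psi(0,r)$; since $0<\gmm'+Wr^2/2<\psi(0,r)$ precisely on this range of $r$, the equation $\psi(z,r)-Wr^2/2=\gmm'$ has a unique solution $z=-g(r)<0$, with $g>0$, $g(r')=-z'$, and $g\in C^1$ by the implicit function theorem (using $\rd_z\psi\ne0$). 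The one point needing care is the escape estimate $g(r)\to\ift$ as $r\downarrow\sqrt{-2\gmm'/W}$: were $\liminf g=c<\ift$, then along a subsequence $r_n\to\sqrt{-2\gmm'/W}$ with $g(r_n)\to c$ we would get $0<\psi(-c,\sqrt{-2\gmm'/W})=\lim_n\psi(-g(r_n),r_n)=\lim_n(\gmm'+Wr_n^2/2)=0$, a contradiction. I would also note that $\psi(0,s)-Ws^2/2>\gmm'$ in fact holds for every $s\in(0,r']$ (combining the hypothesis with $\psi>0$ and strict Steiner symmetry at $s=r'$), so $\Gmm_3(\gmm')$ meets neither the $z$-axis nor the segment $\set{z=0,\,0<r\le r'}$, which keeps the trajectory confined to $\set{z<0}$.

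Finally, I would track the moving-frame trajectory of the particle starting at $(z',r')$: its streamline lies in $\Gmm_3(\gmm')$, it starts with negative $z$-coordinate, and $\overline{v}^r<0$ on $\set{z<0}$ forces the $r$-coordinate to decrease, so the particle traverses the graph $z=-g(r)$ with $r$ decreasing toward $\sqrt{-2\gmm'/W}$. On each compact subinterval $[\sqrt{-2\gmm'/W}+\eps,r']$, the continuous function $\overline{v}^r$ restricted to the compact curve $\set{(-g(r),r):r\in[\sqrt{-2\gmm'/W}+\eps,r']}$ is bounded strictly away from $0$, so there is no standstill; hence $r$ runs all the way down, $g(r)\to\ift$, and the trajectory leaves every bounded set in finite time. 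Since (i) of Definition~\ref{def: 3D atmosphere} forces the moving-frame trajectory of any point of a vortex domain to stay inside the bounded set $\Omg$ for all time, we conclude $(z',r')\notin\Omg$, as claimed. The main obstacle is really just this escape estimate together with confirming that the trajectory never reaches $\set{z=0}$ and so genuinely follows the graph of $g$; everything else parallels the two-dimensional argument.
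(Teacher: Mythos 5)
Your argument is correct and is essentially the paper's own proof: the paper establishes the strict Steiner symmetry of $\psi$ by citing \textbf{Step II} of Theorem~\ref{thm: 3D donut} (which uses only the Steiner symmetry of the vorticity) and then simply states that one repeats the argument of Lemma~\ref{lem: 2D streamline formation} using the decay of $\psi$, which is exactly the adaptation you carry out (graph $z=-g(r)$ in the level set, escape estimate $g(r)\to\infty$ as $r\downarrow\sqrt{-2\gmm'/W}$, sign of $\overline{v}^r$ on $\set{z<0}$, no-standstill argument, contradiction with the boundedness required by (i) of Definition~\ref{def: 3D atmosphere}). Your write-up just supplies the details the paper leaves implicit, so there is nothing to correct.
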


\begin{figure}[t]
\centering
\includegraphics[width=0.9\linewidth]{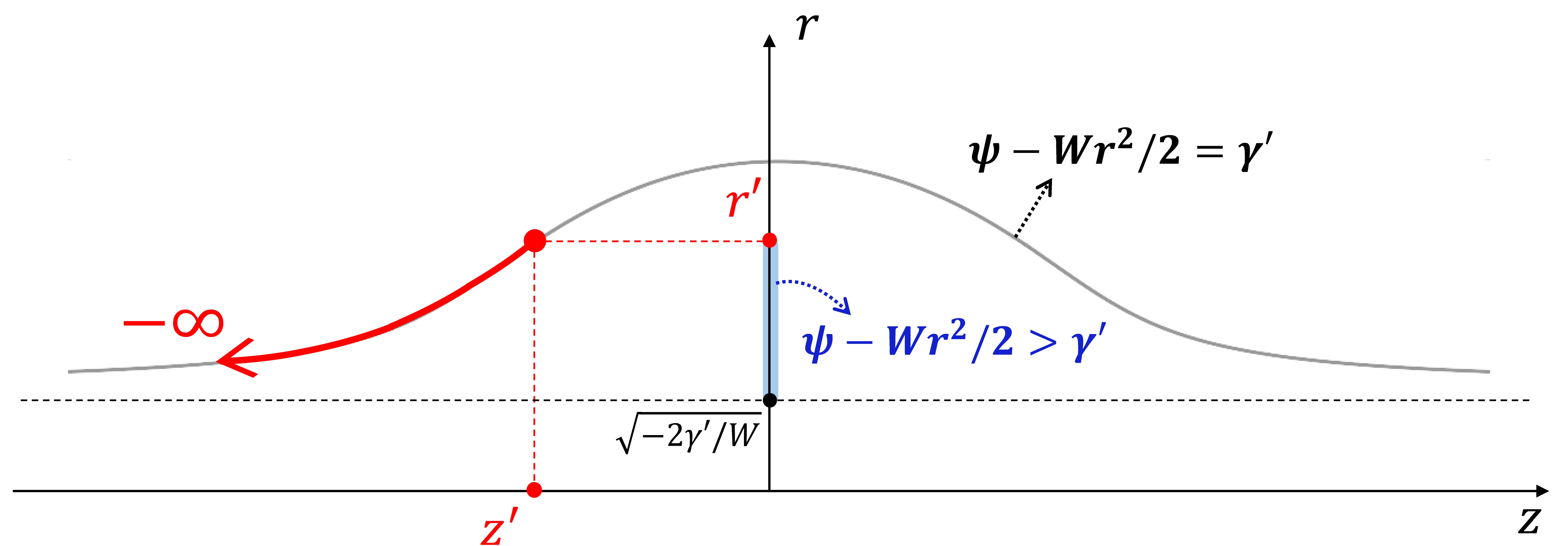}
\caption{In Lemma~\ref{lem: 3D streamline formation}, the flow map in the $(z,r)$-plane $\Pi$ takes the particle at $(z',r')$ indefinitely away from the origin. For all time, this particle is imprisoned in the level set $\set{\psi[\overline\xi]-Wr^2/2=\gmm'}
$ and asymptotically approaches the horizontal line $\set{r=\sqrt{-2\gmm'/W}}$.}
\label{fig: 3D streamline formation}
\end{figure}
\begin{proof}
We first note that the stream function $\psi=\psi[\overline\xi]$ is given by 
$$\psi(z,r)=
\int_\Pi G(r,z,r',z')\overline\xi(z',r')r'\dd r'\dd z'>0$$ with the nonnegative Green's function 
$$\begin{aligned}
G(r,z,r',z')&=\f{rr'}{2\pi}\int_0^\pi\f{\cos\vartheta}{[r^2+r'^2-2rr'\cos\vartheta+(z-z')^2]^{1/2}}\dd\vartheta\\
&=\f{rr'}{2\pi}\int_0^{\pi/2}\cos\vartheta\left(\f{1}{[r^2+r'^2-2rr'\cos\vartheta+(z-z')^2]^{1/2}}-\f{1}{[r^2+r'^2+2rr'\cos\vartheta+(z-z')^2]^{1/2}}\right)\dd\vartheta\\
&>0.
\end{aligned}$$ We recall \textbf{Step II} in the proof of Theorem \ref{thm: 3D donut}, which used the Steiner symmetry of $\xi_\lmb$ only, to confirm the \textit{strict} Steiner symmetry of $\psi$ with 
$\partial_z\psi<0$ in $\set{(z,r)\in\Pi:z>0}$.

\medskip Now, we suppose that there exists a point $(z',r')\in\set{(z,r)\in\Pi:z<0}$ satisfying 
$$\psi(z',r')-Wr'^2/2=:\gmm'<0\qd\mbox{and}\qd 
\psi(0,s)-Ws^2/2>\gmm'\qd\mbox{for each}\q s\in(\sqrt{-2\gmm/W},z').$$ 
We can repeat a similar argument with the proof of Lemma~\ref{lem: 2D streamline formation} thanks to the decay of $\psi(z,r)=O(|(z,r)|^{-1})$ given in \cite[Lemma 1.1]{FT1981} such that the point $(z',r')$ cannot be contained in the corresponding vortex domain. It completes the proof.
\end{proof}

\begin{lem}\label{lem: 3D core in superlevel set}
Let $\overline\xi$ be any vortex ring (Definition~\ref{def: 3D ring}) with a traveling speed $W>0$. In the $(z,r)$-plane $\Pi$, suppose $\overline\xi$ has the Steiner symmetry (Definition~\ref{def: 3D steiner}) with a simply-connected vortex core $\set{(z,r)\in\Pi:\overline\xi(z,r)>0}$ and assume
$$\overline{v}^z(0,0)\geq W,$$ where $\overline{v}^z=\rd_r\psi[\overline\xi]/r$ is the $z$-component of the velocity $\overline\bfv$. Then, the superlevel set 
$$\left\{(z,r)\in\Pi:\psi[\overline\xi](z,r)-Wr^2/2>0\right\}$$ also has the Steiner symmetry, is simply connected, and contains the vortex core.
\end{lem}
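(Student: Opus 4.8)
The plan is to follow the proof of the two-dimensional analogue, Lemma~\ref{lem: 2D core in superlevel set}, line by line, with the Laplacian replaced by the axisymmetric stream operator and with $\calG[\overline\omg]-Wx_2$ replaced by $\phi:=\psi[\overline\xi]-Wr^2/2$. Write $\psi=\psi[\overline\xi]$; recall that $\psi$ solves $\partial_{rr}\psi-\tfrac1r\partial_r\psi+\partial_{zz}\psi=-r^2\overline\xi$ (e.g.\ \cite[Chapter 1]{FT1981}), so $\phi$ solves the same equation. By Lemma~\ref{lem: 3D streamline formation} the function $\psi$, hence $\phi$ (which differs from $\psi$ by a $z$-independent term), has the \emph{strict} Steiner symmetry, so $z=0$ is a strict maximum of $z\mapsto\phi(z,r)$ for each $r>0$; in particular $\set{\phi>0}$ automatically has the Steiner symmetry, and it remains to identify the radial slice $\set{r>0:\phi(0,r)>0}$ and to show that the core is contained in $\set{\phi>0}$. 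From simply-connectedness and Steiner symmetry of the core, fix $0\le R_1<R_2<\infty$ with $\set{r>0:\overline\xi(0,r)>0}=(R_1,R_2)$; by Steiner symmetry the core lies in $\set{R_1<r<R_2}$.

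The first and main step, the analogue of \textit{(Claim~1)} in the proof of Lemma~\ref{lem: 2D core in superlevel set}, is to show $\partial_{zz}\psi(0,r)<0$ for $r\in(0,R_1)\cup(R_2,\infty)$. Granting this, at such points $\overline\xi=0$ gives $\partial_{rr}\psi-\tfrac1r\partial_r\psi=-\partial_{zz}\psi$, hence $\partial_r\overline{v}^z(0,r)=\partial_r\!\big(\tfrac1r\partial_r\psi\big)(0,r)=\tfrac1r\big(\partial_{rr}\psi-\tfrac1r\partial_r\psi\big)(0,r)=-\tfrac1r\partial_{zz}\psi(0,r)>0$, so $r\mapsto\overline{v}^z(0,r)$ is strictly increasing on $(0,R_1]$ and on $[R_2,\infty)$ (the closed endpoints by continuity of the velocity). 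To prove $\partial_{zz}\psi(0,r)<0$ I would either differentiate the representation $\psi(z,r)=\int_\Pi G(r,z,r',z')\,\overline\xi(z',r')\,r'\,\dd r'\dd z'$ twice in $z$ at $z=0$ and symmetrize the $z'$- and $\vartheta$-integrations, invoking that $z'\mapsto\overline\xi(z',r')$ is non-increasing on $z'\ge0$ (the scheme of the two-dimensional kernel estimate), or, more cleanly, apply the Hopf boundary point lemma to $w:=\partial_z\psi$: near $(0,r)$ with $r>0$ away from the core, $w$ solves the uniformly elliptic homogeneous equation $\partial_{rr}w-\tfrac1r\partial_r w+\partial_{zz}w=0$, is strictly negative on $\set{z>0}$ by Lemma~\ref{lem: 3D streamline formation}, and vanishes on $\set{z=0}$, so Hopf forces $\partial_z w(0,r)=\partial_{zz}\psi(0,r)<0$. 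I expect this sign analysis to be the main obstacle.

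The second step, the analogue of \textit{(Claim~2)}, is $\phi(0,r)>0$ for all $r\in(0,R_2)$. Since $\partial_r\phi(0,r)=r\big(\overline{v}^z(0,r)-W\big)$ and $\overline{v}^z(0,0)\ge W$, continuity of $\overline{v}^z$ (when $\overline{v}^z(0,0)>W$) or strict monotonicity from the first step on $(0,R_1]$ (when $\overline{v}^z(0,0)=W$; this uses $R_1>0$, and the degenerate possibility $R_1=0$ with equality, if it arises, needs a short separate argument) gives $\phi(0,\cdot)>0$ on a right neighbourhood of $0$. To extend this up to $R_2$, set $R:=\sup\set{\rho>0:\phi(0,s)>0\text{ for all }s\in(0,\rho)}$ and suppose $R<R_2$; then $\phi(0,R)=0$ and $R\in(R_1,R_2)$, so $(0,R)$ is in the open core, and for small $\eps>0$ the point $(-\eps,R)$ is in the core with $\gmm':=\phi(-\eps,R)<0$ by strict Steiner symmetry. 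Since $\phi(0,s)>0>\gmm'$ for $s\in(0,R)$ and $\sqrt{-2\gmm'/W}<R$ (because $\psi>0$ in $\Pi$), Lemma~\ref{lem: 3D streamline formation} places $(-\eps,R)$ outside the vortex domain, hence outside the core, a contradiction. Therefore $R\ge R_2$.

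Finally I assemble the conclusion. On $[R_2,\infty)$, strict monotonicity of $\overline{v}^z(0,\cdot)$ together with the decay $\overline{v}^z(0,r)\to0$ as $r\to\infty$ (cf.\ the proof of Proposition~\ref{prop: existence} and \cite[Lemma 1.1]{FT1981}) forces $\overline{v}^z(0,r)<0<W$ there, so $\partial_r\phi(0,r)<0$ and $\phi(0,\cdot)$ decreases strictly from $\phi(0,R_2)\ge0$ to $-\infty$ (using Proposition~\ref{prop: decay of stream/term}); combined with $\phi(0,\cdot)>0$ on $(0,R_2)$ this yields $\set{r>0:\phi(0,r)>0}=(0,R)$ for a unique $R\ge R_2$. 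By the strict Steiner symmetry of $\phi$ it follows that $\set{\phi>0}=\set{(z,r)\in\Pi:r\in(0,R),\,|z|<l(r)}$ with $l(r):=\sup\set{z>0:\phi(z,r)>0}>0$ on $(0,R)$, which is simply connected and Steiner symmetric. For the inclusion of the core, suppose a core point $(z_0,r_0)$ has $\phi(z_0,r_0)\le0$; by the symmetry $z\mapsto-z$ of both the core and $\phi$ we may take $z_0\le0$. The case $z_0=0$ contradicts $\phi(0,r_0)>0$ (since $r_0\in(R_1,R_2)$); and if $z_0<0$, then, the core being open, for small $\eps>0$ the point $(z_0-\eps,r_0)$ is in the core with $\phi(z_0-\eps,r_0)<0$, and since $r_0\in(R_1,R_2)\subset(0,R_2)$ and $\psi>0$, Lemma~\ref{lem: 3D streamline formation} excludes it from the vortex domain, hence from the core, a contradiction. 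This shows that $\set{\phi>0}$ contains the vortex core and completes the plan.
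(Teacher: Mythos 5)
Your plan reproduces the paper's own proof almost claim by claim: the sign $\rd_{zz}\psi(0,r)<0$ off the core giving strict monotonicity of $r\mapsto\overline v^z(0,r)$ on $[0,R_1]$ and $[R_2,\ift)$, the positivity of $\phi(0,\cdot):=\psi(0,\cdot)-W(\cdot)^2/2$ on $(0,R_2)$ via the sup/contradiction argument with Lemma~\ref{lem: 3D streamline formation}, the negativity of $\overline v^z(0,\cdot)$ beyond $R_2$ from monotonicity plus decay, and the final core-inclusion by perturbing a hypothetical core point into $\set{z<0}$ and applying the streamline lemma again; all of these steps are sound as you state them. Your suggested Hopf-lemma route for the key sign is a legitimate and arguably cleaner alternative to what the paper does: the paper differentiates the Green's function twice in $z$ at $z=0$ and exploits the Steiner symmetry of $\overline\xi$ by splitting the $z'$-integral at $z'=t/2$, whereas you use that $w=\rd_z\psi$ solves the homogeneous equation $\rd_{rr}w-\tfrac1r\rd_rw+\rd_{zz}w=0$ in a half-disk about $(0,r)$ (valid since the core lies in the strip $\set{R_1<r<R_2}$ and $r$ stays away from $0$), is strictly negative for $z>0$, and vanishes on $z=0$; both yield the same conclusion, and the strict negativity of $w$ you need is exactly what the paper's Step II computation provides.

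The one genuine hole is the sub-case you flag but do not prove: $R_1=0$ together with $\overline v^z(0,0)=W$. There your Step 2 has no starting point — the set $\set{\rho>0:\phi(0,s)>0\ \text{for all}\ s\in(0,\rho)}$ could a priori be empty, and then $R=\sup\emptyset$ and the subsequent contradiction argument (which needs $R\in(R_1,R_2)$ and $\phi(0,R)=0$) does not launch. Since the lemma's hypothesis allows equality $\overline v^z(0,0)=W$ with a core touching the axis, this case must be handled, and it is not a one-word fix: the paper closes it by assuming the set is empty, taking the infimum of $\phi(0,\cdot)$ over $[0,R_2/2]$, noting it is attained at some $r\in(0,R_2/2]\subset(R_1,R_2)$ with $\phi(0,s)\ge\phi(0,r)$ for $s\in(0,r)$, choosing a core point $(z,r)$ with $z<0$ (possible since the core is open), and using the strict Steiner symmetry of $\psi$ so that $\phi(z,r)<\phi(0,r)\le\phi(0,s)$ for all $s\in(0,r)$; Lemma~\ref{lem: 3D streamline formation} then expels $(z,r)$ from the vortex domain, contradicting that the core lies in the domain. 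The technique is one you already use twice elsewhere in your write-up, so the gap is fillable in the same spirit, but as written your proposal proves the lemma only under the additional assumption that $\overline v^z(0,0)>W$ or $R_1>0$.
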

\begin{proof}
The proof is essentially the same as that of Lemma~\ref{lem: 2D core in superlevel set}. For completeness, we will give a proof in the way of reminding each step in the proof of Lemma~\ref{lem: 2D core in superlevel set}. Knowing that the vortex core $\set{\overline\xi>0}$ is simply connected and has the Steiner symmetry, we fix two constants $0\leq R_1<R_2<\ift$ such that 
$$\set{s>0:\overline\xi(0,s)>0}=(R_1,R_2).$$

\medskip
\noindent$\blacktriangleright$ \textit{(Claim 1) The function $s\mapsto \rd_r\psi(0,s)/s$ is strictly increasing in the intervals $[0,R_1]$ and $[R_2,\ift)$, respectively.}

\medskip
Note that $\psi$ is infinitely differentiable at each point $(z,r)\in\overline{\set{\overline\xi>0}}^c$ since the Green's function $G$ in the integrand does not possess any singularity. Applying the relation 
$$-\f{1}{r^2}\left(\rd_r^2\psi-\f{1}{r}\rd_r\psi+\rd_z^2\psi\right)=\overline\xi$$ (e.g., see \cite[Chapter 1]{FT1981} for a background) for each $(0,r)\in\overline{\set{\overline\xi>0}}^c$, we obtain that
\begin{equation}\label{eq1103_1}
\left(\rd_r^2\psi-\f{1}{r}\rd_r\psi\right)(0,r)>0
\end{equation}
due to $\overline\xi(0,r)=0$ and $\rd_z^2\psi(0,r)<0$.
Indeed, the Steiner symmetry of $\psi$ (in Lemma~\ref{lem: 3D streamline formation}) implies $\rd_z^2\psi(0,r)\leq0$, and the equality does not hold by the following observation: 
$$\begin{aligned}
\rd_z^2G\big|_{z=0}&=\f{rr'}{2\pi}\int_0^\pi\rd_z^2\f{\cos\vtht}{[r^2+r'^2-2rr'\cos\vtht+(z-z')^2]^{1/2}}\Bigg|_{z=0}\dd\vtht\\
&=\f{rr'}{2\pi}\int_0^\pi\cos\vtht\f{[2z'^2-(r^2+r'^2-2rr'\cos\vtht)]}{[r^2+r'^2-2rr'\cos\vtht+z'^2]^{5/2}}\dd\vtht\\
&=\f{rr'}{2\pi}\int_0^{\pi/2}\cos\vtht
\left(\f{[2z'^2-(r^2+r'^2-2rr'\cos\vtht)]}{[r^2+r'^2-2rr'\cos\vtht+z'^2]^{5/2}}-\f{[2z'^2-(r^2+r'^2+2rr'\cos\vtht)]}{[r^2+r'^2+2rr'\cos\vtht+z'^2]^{5/2}}\right)\dd\vtht\\
&=\f{rr'}{2\pi}\int_0^{\pi/2}\cos\vtht\left(\int_{r^2+r'^2-2rr'\cos\vtht}^{r^2+r'^2+2rr'\cos\vtht}4t\f{4z'^2-t^2}{(z'^2+t^2)^{7/2}}\dd t\right)\dd\vtht
\end{aligned}$$ leads to  
$$\begin{aligned}
\rd_z^2\psi(0,r)&=\int_\Pi \rd_z^2G(r,0,r',z')\overline\xi(z',r')r'\dd r'\dd z'\\&
=\int_0^{\pi/2}\cos\vtht\int_{R_1}^{R_2} \f{rr'^2}{\pi}\underbrace{\left(\int_0^\ift \overline\xi(z',r')\cdot\int_{r^2+r'^2-2rr'\cos\vtht}^{r^2+r'^2+2rr'\cos\vtht}4t\f{4z'^2-t^2}{(z'^2+t^2)^{7/2}}\dd t \dd z' \right)}_{(*)}\dd r'\dd\vtht
\end{aligned}$$ where, for the integrand 
$$\begin{aligned}
(*)=\int_{r^2+r'^2-2rr'\cos\vtht}^{r^2+r'^2+2rr'\cos\vtht}4t\left(\int_0^\ift \overline\xi(z',r')\f{4z'^2-t^2}{(z'^2+t^2)^{7/2}}\dd z'\right)\dd t=\int_{r^2+r'^2-2rr'\cos\vtht}^{r^2+r'^2+2rr'\cos\vtht}4t\left(\int_0^{t/2}\cdots\dd z'+\int_{t/2}^\ift\cdots\dd z'\right)\dd t, 
\end{aligned}$$ the Steiner symmetry of $\overline\xi$ is used to confirm the inequalities 
$$\int_0^{t/2}\cdots\dd z'\leq \overline\xi(t/2,r')\int_0^{t/2}\f{4z'^2-t^2}{(z'^2+t^2)^{7/2}}\dd z',\qd\int_{t/2}^\ift\cdots\dd z'\leq \overline\xi(t/2,r')\int_{t/2}^\ift\f{4z'^2-t^2}{(z'^2+t^2)^{7/2}}\dd z',$$ and hence
$$\int_0^\ift\cdots\dd z'<
\overline\xi(t/2,r')\int_0^\ift\f{4z'^2-t^2}{(z'^2+t^2)^{7/2}}\dd z'=0$$ using the relation 
$\int_0^\ift \f{4z^2-1}{(z^2+1)^{7/2}}\dd z=0$. The strict inequality $<$ above is induced from the fact that the equality never holds unless $\overline\xi(\cdot,r')\equiv0$, since the vortex core $\set{\overline\xi>0}$ is a bounded set. Hence we have $(*)<0$, and so $\rd_z^2\psi(0,r)<0$. It justifies the strict inequality \eqref{eq1103_1}, which implies that $\rd_r\psi(0,r)/r$ is (strictly) increasing for $r\in[0,R_1]$ and $r\in[R_2,\ift)$ respectively. It completes the proof of \textit{(Claim 1)}.\\

\noindent$\blacktriangleright$ \textit{(Claim 2) The set $\set{s>0:\psi(0,s)-Ws^2/2>0}$ is nonempty and contains the interval $(0,R_2)$.}

\medskip
We will first show that the set $A:=\set{r>0:\psi(0,s)-Ws^2/2>0\q\q\mbox{for each}\q s\in(0,r)}$ is nonempty. If $R_1>0$, from the initial assumption $$\overline{v}^z(0,0)=\lim_{s\downarrow0}\f{1}{s}\rd_r\psi(0,s)\geq W$$ and the fact from \textit{(Claim 1)} that $\rd_r\psi(0,s)/s$ is increasing for $s\in[0,R_1]$, we have 
$$\psi(0,s)-Ws^2/2=\int_0^s s'\left(\f{1}{s'}\rd_r\psi(0,s')-W\right)\dd s' >0
\qd\mbox{for each}\q s\in(0,R_1].$$ Hence, the set $A$ is nonempty. Even when $R_1=0$, if $\overline{v}(0,0)>W$, we can deduce $A\neq\emptyset$ from the continuity of $\psi$. We now suppose $R_1=0$ and $\overline{v}(0,0)=W$. We will show $A\neq\emptyset$ by contradiction. If $A=\emptyset$, we have $$L:=\inf_{s\in[0,R_2/2]}\left(\psi(0,s)-Ws^2/2\right)\in(-\ift,0]\qd\mbox{and}\qd\psi(0,r)-Wr^2/2=L\qd\mbox{for some}\q r\in(0,R_2/2].$$ Note that $r\in(R_1,R_2)$ and $\psi(0,s)-Ws^2/2\geq L$ for each $s\in(0,r)$. We now choose $z<0$ such that $(z,r)\in\set{\overline\xi>0}$ (it is possible since the core $\set{\overline\xi>0}$ is open) to observe that 
$$\psi(z,r)-\f{W}{2}r^2<L\leq\psi(0,s)-\f{W}{2}s^2\qd\mbox{for each}\q s\in(0,r),$$ where we used the \textit{strict} Steiner symmetry of $\psi$. Applying Lemma~\ref{lem: 3D streamline formation} to the above relation gives a contradiction to the choice $(z,r)\in\set{\overline\xi>0}$. Hence, we conclude that $A\neq\emptyset$.

It remains to show $R_2\leq R:=\sup A$. We suppose the contrary, i.e., $R<R_2$. By the continuity of $\psi$, we have $\psi(0,R)-WR^2/2=0$ and so $R>R_1$. In sum, we have $R\in (R_1,R_2)=\set{s>0:\overline\xi(0,s)>0}$. Since the vortex core is open in $\Pi$, there exists a small constant $0<\eps\ll1$ such that $\overline\xi(-\eps,R)>0$. Moreover, by the \textit{strict} Steiner symmetry of $\psi$, we have $\psi(-\eps,R)-WR^2/2=:\sgm<0$. Since $\psi(0,s)-Ws^2/2>0>\sgm$ for each $x_2\in(0,R)$ by the definition of $R$, Lemma~\ref{lem: 3D streamline formation} implies that the point $(-\eps,R)$ cannot be contained in the vortex domain,  which is a contradiction to $\overline\xi(-\eps,R)>0$. Thus, we conclude that $R\geq R_2,$ and this completes the proof of \textit{(Claim 2)}.\\

\noindent$\blacktriangleright$ \textit{(Claim 3) We have $\rd_r\psi(0,s)/s<0$ for $s\geq R_2$.}

\medskip
We observe that $\rd_r\psi(0,s)/s$ is strictly increasing in $s\in[R_2,\ift)$ by \textit{(Claim 1)}, while $|\rd_r\psi(0,s)/s|\to0$ as $s\to\ift$ (see \cite[Lemma 1.1]{FT1981} for the decay $|\nb\psi/r|=O(|(z,r)|^{-3})$ as $|(z,r)|\to\ift$). This means $\rd_r\psi(0,s)/s<0$ for $s\geq R_2$. It completes the proof of \textit{(Claim 3)}. \\

\noindent$\blacktriangleright$ \textit{(Claim 4) The superlevel set $\set{(z,r)\in\Pi:\psi(z,r)-Wr^2/2>0}$ is simply connected and has the Steiner symmetry.}

\medskip
Recalling \textit{(Claim 2)} and \textit{(Claim 3)}, the function $\psi(0,s)-Ws^2/2$ is positive for $s\in(0,R_2)$ and decreases for $s>R_2$ since its derivative $\rd_r\psi(0,s)-Ws=s(\rd_r\psi(0,s)/s-W)$ is negative for $s>R_2$. By the boundedness of the superlevel set $\set{(z,r)\in\Pi:\psi(z,r)-Wr^2/2>0}$ (Proposition~\ref{prop: decay of stream/term}), we have $\set{s>0:\psi(0,s)-Ws^2/2>0}=(0,R)$ for some $R\in(0,\ift)$. By the Steiner symmetry of $\psi$, the superlevel set  $\set{(z,r)\in\Pi:\psi(z,r)-Wr^2/2>0}$ has the Steiner symmetry and is simply connected. It completes the proof of \textit{(Claim 4)}.\\

It remains to prove that the superlevel set 
$$\set{(z,r)\in\Pi:\psi(z,r)-Wr^2/2>0}$$ (which is simply connected and has the Steiner symmetry by \textit{(Claim 4)}) contains the vortex core $\set{(z,r)\in\Pi:\overline\xi(z,r)>0}$. We suppose the contrary, i.e., for some $(z',r')\in\Pi$ such that $z'<0$ and $r'\in(R_1,R_2)$ satisfying 
$$\overline\xi(z',r')>0\qd\mbox{and}\qd\psi(z',r')-Wr'^2/2\leq0.$$ Since the vortex core is open in $\Pi$ and $\psi$ has the \textit{strict} Steiner symmetry, for small $0<\eps\ll1$ we have 
$$\overline\xi(z'-\eps,r')>0\qd\mbox{and}\qd\psi(z'-\eps,r')-Wr'^2/2=:\gmm<0.$$ Using the result of \textit{(Claim 2)} and Lemma~\ref{lem: 3D streamline formation}, we derive that the point $(z'-\eps,r')$ cannot be contained in the vortex domain, which is a contradiction to $\overline\xi(z'-\eps,r')>0$. It completes the proof of Lemma~\ref{lem: 3D core in superlevel set}.
\end{proof}

We now give the proof of Theorem~\ref{thm: class. of 3D atmos.}. 
We first treat [Case I], and the proof of [Case II] will follow.
\begin{proof}[\textbf{Proof of [Case I]-Theorem~\ref{thm: class. of 3D atmos.}}]
The proof is essentially the same as that of Theorem \ref{thm: class. of 2D atmos.}. We first put 
$$\set{s>0:\overline\xi(0,s)>0}=(R_1,R_2)\qd\mbox{for}\q 0\leq R_1<R_2<\ift.$$ The stream function $\psi=\psi[\overline\xi]$ has the \textit{strict} Steiner symmetry (Lemma~\ref{lem: 3D streamline formation}), and the superlevel set 
$\set{\psi-Wr^2/2>0}$ is simply connected and has the Steiner symmetry by \textit{(Claim 4)} in the proof of Lemma~\ref{lem: 3D core in superlevel set}, which implies the existence of the boundary curve $l:(0,R]\to[0,\ift)$ in the statement [Case I] of Theorem~\ref{thm: class. of 3D atmos.} as desired (whose differentiability is obvious from the differentiability and the \textit{strict} Steiner symmetry of $\psi$) except the value of $l(0)$, together with the constant $R\in[R_2,\ift)$ satisfying 
$$\psi(0,s)-Ws^2/2>0\qd\mbox{for}\q s\in(0,R)\qd\mbox{and}\qd \psi(0,R)-WR^2/2=0.$$ Indeed, such a constant $R$ exists uniquely from the facts that the function $\psi(0,s)-Ws^2/2$ is positive if $0<s<R_2$ by \textit{(Claim 2)} of Lemma~\ref{lem: 3D core in superlevel set}, decreases strictly in $s\geq R_2$ by \textit{(Claim 3)} of Lemma~\ref{lem: 3D core in superlevel set}, and tends to $-\ift$ as $s\to\ift$ (Proposition~\ref{prop: decay of stream/term}).  For $l(0)$, by considering the limit $r\downarrow0$ in the relation 
$\psi(l(r),r)-Wr^2/2=0,$ the observation $$\begin{aligned}
&\left|\f{1}{r^2/2}
\psi(z,r)-\overline{v}^z(z',0)\right|=\left|\f{1}{r^2/2}\int_0^r\rd_r\psi(z,s)\dd s-\overline{v}^z(z',0)\right| = \left| \f{1}{r^2/2}\int_0^r s\overline{v}^z(z,s)\dd s-\overline{v}^z(z',0)\right|\\&\qquad \leq
\f{1}{r^2/2}\int_0^r s|\overline{v}^z(z,s)-\overline{v}^z(z',0)|\dd s \leq \sup_{s\in(0,r)}|\overline{v}^z(z,s)-\overline{v}^z(z',0)| \to0\q\mbox{as}\q |(z,r)-(z',0)|\to0
\end{aligned}
$$ gives the relation $\overline{v}^z(l(0),0)=W$ where $l(0):=\lim_{s\downarrow0}l(s)$. Indeed, this limit is justified by observing that
\begin{equation}\label{eq1117_3}
\lim_{s\downarrow0}l(s)\q
\begin{cases}
\in(0,\ift)&\text{if}\q \overline{v}^z(0,0)>W,\\
=0&\text{if}\q\overline{v}^z(0,0)=W,
\end{cases}
\end{equation} which is a result of the \say{strict} monotonicity of $\overline{v}^z(z,0)$ for $z\geq0$ that will be shown below:
$$\begin{aligned}\overline{v}^z(z,0)&=\lim_{r\to0}\f{1}{r^2/2}\psi(z,r) =\lim_{r\to0}\f{1}{r^2/2}\int_\Pi G(r,z,r',z')\overline\xi(z',r')r'\dd r'\dd z'\\&=\lim_{r\to0}\f{1}{\pi}
\int_{\mathbb{R}}\int_{R_1}^{R_2}
\underbrace{\left(\int_0^\pi\f{\cos\vartheta}{r[r^2+r'^2-2rr'\cos\vartheta+(z-z')^2]^{1/2}}\dd\vtht\right)}_{(**)}\overline\xi(z',r')r'^2\dd r'\dd z'\end{aligned}$$
where 
$$\begin{aligned}
(**)&=\int_0^{\pi/2}\f{\cos\vartheta}{r}\left(\f{1}{[r^2+r'^2-2rr'\cos\vartheta+(z-z')^2]^{1/2}}-\f{1}{[r^2+r'^2+2rr'\cos\vartheta+(z-z')^2]^{1/2}}\right)\dd\vtht\\
&=:\int_0^{\pi/2}\f{\cos\vartheta}{r}\left(\f{1}{A^{1/2}}-\f{1}{B^{1/2}}\right)\dd\vtht =\int_0^{\pi/2}\f{\cos\vartheta}{r}\left(\f{B-A}{A^{1/2}B^{1/2}(A^{1/2}+B^{1/2})}\right)\dd\vtht\\
&=\int_0^{\pi/2}\f{4r'(\cos\vartheta)^2}{A^{1/2}B^{1/2}(A^{1/2}+B^{1/2})}\dd\vtht \to \int_0^{\pi/2}\f{2r'(\cos\vartheta)^2}{[r'^2+(z-z')^2]^{3/2}}\dd\vtht\qd\mbox{as}\q r\to0, 
\end{aligned}$$ and thus we get
$$\begin{aligned}
\overline{v}^z(z,0)&=\f{1}{\pi}\int_\Pi 
\int_0^{\pi/2}\f{2r'(\cos\vartheta)^2}{[r'^2+(z-z')^2]^{3/2}}\dd\vtht
\overline\xi(z',r')r'^2\dd r'\dd z'\\
&=\f{1}{\pi}\int_0^{\pi/2}2(\cos\vartheta)^2\int_0^\ift r'^3\underbrace{\left(\int_{-\ift}^\ift\f{\overline\xi(z',r')}{[r'^2+(z-z')^2]^{3/2}}\dd z'\right)}_{(***)}\dd r'\dd\vtht.
\end{aligned}$$
We deduce that $(***)$ attains its maximum at $z=0$ from the observation that 
$$\begin{aligned}\rd_z(***)&=\int_{-\ift}^\ift\f{-3(z-z')}{[r'^2+(z-z')^2]^{5/2}}\overline\xi(z',r')\dd z'\\
&=\int_0^{\ift}\f{3z'}{[r'^2+z'^2]^{5/2}}\underbrace{\left(\overline\xi(z'+z,r')-\overline\xi(-z'+z,r')\right)}_{\leq0}\dd z'<0\qd\mbox{for}\q z>0,
\end{aligned}$$  where the strict equality $<$ is deduced obviously from the fact that the relation $$\text{\say{$\q\overline\xi(z'+z,r')-\overline\xi(-z'+z,r')=0\qd\mbox{for almost every}\q z'\in(0,\ift)\q$}}$$ never holds for each $r'\in(R_1,R_2)$. It implies the \textit{strict} monotonicity of $\overline{v}^z(z,0)$ for $z\geq0$, which justifies \eqref{eq1117_3}.

\medskip
It remains to prove that the vortex domain coincides with the superlevel set, i.e., 
$$\Omg=\set{\psi-Wr^2/2>0}.$$ Its proof follows directly by the arguments after \eqref{eq1031_1} in the proof of Theorem~\ref{thm: class. of 2D atmos.}, using Lemma~\ref{lem: 3D streamline formation} and Lemma~\ref{lem: 3D core in superlevel set} instead of Lemma~\ref{lem: 2D streamline formation} and Lemma~\ref{lem: 2D core in superlevel set}, respectively. It completes the proof of [Case I] in Theorem~\ref{thm: class. of 3D atmos.}.
\end{proof}
\medskip
\begin{proof}[\textbf{Proof of [Case II]-Theorem~\ref{thm: class. of 3D atmos.}}]
We now consider the case $\overline{v}^z(0,0)<W$. We put 
$$\set{s>0:\overline\xi(0,s)>0}=(R_1,R_2)\qd\mbox{for}\q 0\leq R_1<R_2<\ift.$$ Before we start the proof, for later uses, we bring some properties of the velocity $\overline\bfv=(\overline{v}^z,\overline{v}^r)=\f{1}{r}(\rd_r\psi[\overline\xi],-\rd_z\psi[\overline\xi])$ from the proofs of Theorem~\ref{thm: 3D donut} and Lemma~\ref{lem: 3D core in superlevel set}. From now on, we denote $\psi=\psi[\overline\xi]$ as usual.\\

\textbf{[P1]:} On the $r$-axis, the horizontal velocity $\overline{v}^z(0,s)=\rd_r\psi(0,s)/s$ is strictly increasing in the intervals $s\in[0,R_1]$ and $s\in[R_2,\ift)$, respectively.\\

\textbf{[P2]:} On the $r$-axis, the horizontal velocity $\overline{v}^z(0,s)=\rd_r\psi(0,s)/s$ is negative in the interval $s\in[R_2,\ift)$.\\

\textbf{[P3]:} In the set $\set{(z,r)\in\Pi:z>0}$, the vertical velocity $\overline{v}^r=-\rd_z\psi/r$ is positive.\\ 

Recall that \textbf{[P1]} and \textbf{[P2]} are exactly the sames with \textit{(Claim 1)} and \textit{(Claim 3)} in the proof of Lemma~\ref{lem: 3D core in superlevel set}. We emphasize that, even though the assumption on the center speed $\overline{v}^z(0,0)$ of the current theorem is different from that of Lemma~\ref{lem: 3D core in superlevel set}, the proof of \textbf{[P1]} and \textbf{[P2]} does not rely on that assumption. \textbf{[P3]} can be derived directly by replacing $\xi_\lmb$ with $\overline\xi$ in \textbf{Step II} in the proof of Theorem \ref{thm: 3D donut}), which only used the Steiner symmetry of $\xi_\lmb$.

\medskip
We begin our proof by establishing the following claim.\\

\noindent$\blacktriangleright$ \textit{(Claim 1) The set $\set{r>0:\rd_r\psi(0,s)/s<W\q\q\mbox{for each}\q s\in(0,r)}$ is nonempty and given as an interval $(0,L]$ for some $0<L<\ift$ satisfying $\rd_r\psi(0,L)/L=W$. Moreover, we have $L\leq R_1$, and the function $\psi(0,r)-Wr^2/2$ is negative and decreases strictly for $0<r<L$.}

\medskip
It is obvious that the set $\set{r>0:\rd_r\psi(0,s)/s<W\q\q\mbox{for each}\q s\in(0,r)}$ is nonempty from the relation $v^z(0,s)=\rd_r\psi(0,s)/s$ for $s>0$. To find $L\in(0,\ift)$ satisfying $L\leq R_1$, we suppose the contrary, i.e., $\rd_r\psi(0,r)/r<W$ for each $0<r<L'$ for some $L'>R_1$. Then the function $\psi(0,r)-Wr^2/2$ is negative and decreases strictly for $0<r<L'$ since $\lim_{r\downarrow0}\psi(0,r)=0$ and
$\rd_r\left(\psi(0,r)-Wr^2/2\right)=r\left(\rd_r\psi(0,r)/r-W\right)<0.$ Using this, we observe that any point $(z',r')$ in the core $\set{\overline\xi>0}\cap\set{R_1<r<L'}$ such that $z'<0$ satisfies $$
\psi(z',r')-\f{W}{2}r'^2<\psi(0,r')-\f{W}{2}r'^2<\psi(0,r)-\f{W}{2}r^2<0\qd\mbox{for each}\q r\in(0,r')$$ (first inequality is due to the \textit{strict} Steiner symmetry of $\psi$ from Lemma~\ref{lem: 3D streamline formation}), implying that the point $(z',r')$ cannot be contained in the vortex domain by Lemma~\ref{lem: 3D streamline formation}, so a contradiction. It completes the proof of \textit{(Claim 1)}.\\

The above claim implies that $R_1>0$ and, indeed, $R_1\geq L$ for the constant $L>0$ satisfying $\rd_r\psi(0,s)/s<W$ for each $s\in(0,L)$ and $\rd_r\psi(0,L)/L=W$.  It confirms the following claim.\\

\noindent$\blacktriangleright$ \textit{(Claim 2) We take the constant
$$\gmm:=\psi(0,L)-WL^2/2.$$ Then, we have $\gmm<0$ and  $\psi(0,s)-Ws^2/2\in(\gmm,0)$ for each $s\in(0,L)$.}

\medskip
By \textit{(Claim 1)}, it is obvious that the function $s\mapsto\psi(0,s)-Ws^2/2$ is negative and decreasing strictly in the interval $(0,L)$, so we get $\psi(0,s)-Ws^2/2>\gmm$ for each $s\in(0,L)$. It proves \textit{(Claim 2)}.\\

\noindent$\blacktriangleright$ \textit{(Claim 3) The set $\set{r>L:\psi(0,r)-Wr^2/2>\gmm}$ is nonempty and contains the interval $(L,R_2)$.}

\medskip
We denote $$A:=\set{r>L:\psi(0,s)-Ws^2/2>\gmm\q\q\mbox{for each}\q s\in (L,r)}.$$  If $R_1>L$, then the relation $\rd_r\psi(0,L)/L=W$ and \textbf{[P1]} guarantee $\rd_r\psi(0,r)/r>W$ for each $r\in(L,R_1)$, and hence we get
\begin{equation}\label{eq1106_1}
\psi(0,r)-\f{W}{2}r^2=\gmm+\int_L^r s\left(\f{1}{s}\rd_r\psi(0,s)-W\right)\dd s>\gmm \qd\mbox{for each}\q r\in(L,R_1].
\end{equation}
That is, the set $A$ is nonempty. We now consider the case $R_1=L$, and assume that $A=\emptyset$. Then for any $R_2'\in(R_1,R_2)$, we have $$\gmm':=\inf_{s\in[L,R_2']}\left(\psi(0,s)-Ws^2/2\right)\in(-\ift,\gmm]\qd\mbox{and}\qd\psi(0,r)-Wr^2/2=\gmm'\qd\mbox{for some}\q r\in(L,R_2']$$ 
(such $r$ exists in the interval $(L,R'_2]$ from our assumption $A=\emptyset$). Note that $r\in(R_1,R_2)$ and 
\begin{equation}\label{eq1106_2}
\psi(0,s)-Ws^2/2\geq \gmm'\qd\mbox{for each}\q s\in(L,r).\end{equation} Moreover, \textit{(Claim 2)} says that \eqref{eq1106_2} also holds for each $s\in(0,L]$. Knowing this, we now choose any $z<0$ satisfying $(z,r)\in\set{\overline\xi>0}$ to observe that 
$$\psi(z,r)-\f{W}{2}r^2<\gmm'\leq\psi(0,s)-\f{W}{2}s^2\qd\mbox{for each}\q s\in(0,r),$$ where we used the \textit{strict} Steiner symmetry of $\psi$ for the first inequality. Applying Lemma~\ref{lem: 3D streamline formation} to the above relation gives a contradiction to the choice $(z,r)\in\set{\overline\xi>0}$. Hence, we conclude that $A\neq\emptyset$.

It remains to show $R_2\leq a:=\sup A$. We suppose the contrary, i.e., $a<R_2$. From the definition of $a$, we have $a>L$, and by the continuity of $\psi$, we have $\psi(0,a)-Wa^2/2=\gmm$ and so $a>R_1$ (in case of $R_1=L$, it is obvious that $a>R_1=L$, and for the case $R_1>L$, we can recall the relation \eqref{eq1106_1} that negates the possibility of $a\leq R_1$). In sum, we have $a\in (R_1,R_2)$ and so $\overline\xi(0,a)>0$. Since the vortex core is open in $\Pi$, there exists a small constant $0<\eps\ll1$ such that $\overline\xi(-\eps,a)>0$. Moreover, by the \textit{strict} Steiner symmetry of $\psi$, we have 
$$\psi(-\eps,a)-\f{W}{2}R^2<\gmm\leq \psi(0,s)-\f{W}{2}s^2\qd\mbox{for each}\q s\in(0,a)$$ (the second inequality with the case $s\in(0,L]$ follows from \textit{(Claim 2)}, and the case $s\in(L,a)$ is obvious by the definition of $a$). Applying Lemma~\ref{lem: 3D streamline formation} to the above relation gives a contradiction to $\overline\xi(-\eps,a)>0$. It completes the proof of \textit{(Claim 3)}.\\

\noindent$\blacktriangleright$ \textit{(Claim 4) The superlevel set $\set{\psi-Wr^2/2>\gmm}\cap\set{r>L}$ contains the vortex core $\set{\overline\xi>0}$.}

\medskip
By $L\leq R_1$ from \textit{(Claim 1)}, it is obvious that the vortex core is contained in $\set{r>L}$. Knowing this, we suppose the contrary, i.e., for some $(z,r)\in\set{\overline\xi>0}$, we have
$$\psi(z,r)-Wr^2/2\leq\gmm.$$ Without loss of generosity, we may assume $z\leq0$ (by the Steiner symmetry of $\overline\xi$ and $\psi$). Since the vortex core is open in $\Pi$ and $\psi$ has the \textit{strict} Steiner symmetry, for small $0<\eps\ll1$ we have $\overline\xi(z-\eps,r)>0$ and 
$$\psi(z-\eps,r)-\f{W}{2}r^2<\gmm\leq \psi(0,s)-\f{W}{2}r^2\qd\mbox{for each}\q s\in(0,r),$$ where the last inequality follows directly from \textit{(Claim 2)} and \textit{(Claim 3)}. Applying Lemma~\ref{lem: 3D streamline formation} to the above relation, we derive that the point $(z-\eps,r)$ cannot be contained in the vortex domain, which contradicts $\overline\xi(z-\eps,r)>0$. It completes the proof of \textit{(Claim 4)}.\\

\noindent$\blacktriangleright$ \textit{(Claim 5) The superlevel set $\set{\psi-Wr^2/2>\gmm}\cap\set{r>L}$ is bounded, simply connected, and has the Steiner symmetry. In particular, the set $\set{r>L:\psi(0,r)-Wr^2/2>\gmm}$ is given by an interval $(L,R)$ for some constant $R\geq R_2$.}

\medskip
By \textit{(Claim 3)}, we have 
$$\psi(0,r)-\f{W}{2}r^2>\gmm\qd\mbox{for each}\q r\in(L,R_2).$$ We also note that the function $r\mapsto\psi(0,r)-Wr^2/2$ is strictly decreasing in the interval $[R_2,\ift)$ by \textbf{[P2]}. By the boundedness of the superlevel set $\set{\psi-Wr^2/2>\gmm}\cap\set{r>L}$ (note that it is contained in the set $\set{\psi/r^2>W/2+\gmm/L^2}$, which is bounded due to $W/2+\gmm/L^2=\psi(0,L)/L^2>0$ and the decay of $\psi/r^2$ given in Proposition~\ref{prop: decay of stream/term}), the set $\set{r>L:\psi(0,r)-Wr^2/2>\gmm}$ is given by an interval $(L,R)$ for some $R\geq R_2$. Hence, from the Steiner symmetry of $\psi$, we deduce that the superlevel set $\set{\psi-Wr^2/2>\gmm}\cap\set{r>L}$ is simply connected and has the Steiner symmetry. It completes the proof of \textit{(Claim 5)}.\\

\noindent$\blacktriangleright$ \textit{(Claim 6) The superlevel set $\set{\psi-Wr^2/2>\gmm}\cap\set{r>L}$ is given by
$$\set{(z,r)\in\Pi:r\in(L,R),\,|z|<l(r)}$$ for some function $l:[L,R]\to[0,\ift)$ satisfying $l>0$ in $(L,R)$, $l(L)=l(R)=0$, and $l\in C([L,R])\cap C^1((L,R))$.}

\medskip
The existence of such a curve $l:(L,R)\to[0,\ift)$ is derived from \textit{(Claim 5)} such that $l>0$ in $(L,R)$. At the same time, the differentiability of $l$ and the limits $$l(L):=\lim_{s\downarrow L}l(s)=0,\qd l(R):=\lim_{s\uparrow R}l(s)=0$$ are obvious from the \textit{strict} Steiner symmetry and differentiability of $\psi$. It completes the proof of \textit{(Claim 6)}.\\

\noindent$\blacktriangleright$ \textit{(Claim 7) The corresponding vortex domain is contained in $\set{r>L}$.}

\medskip
Let $(z',r')\in\Pi$ be any point satisfying $r'\leq L$. From the definition of $L>0$, we deduce that the function $s\mapsto\psi(0,s)-Ws^2/2$ is negative and decreases strictly in the interval $(0,L]$. Hence, we get 
\begin{equation}\label{eq1118_1}
\psi(0,r')-\f{W}{2}r'^2<\psi(0,r)-\f{W}{2}r^2<0\qd\mbox{for each}\q r\in(0,r').
\end{equation}

\begin{itemize}
\item[$\ast$] Assume \say{$z'\leq0$}. We suppose the contrary, i.e., $(z',r')$ is in the vortex domain. Since the vortex domain is open in $\Pi$, there exists a small constant $0<\eps\ll1$ such that the point $(z'-\eps, r'-\eps)$ is still in the vortex domain, and the relation \eqref{eq1118_1} gives us
$$\psi(z'-\eps,r'-\eps)-\f{W}{2}(r'-\eps)^2<\psi(0,r'-\eps)-\f{W}{2}(r'-\eps)^2<\psi(0,r)-\f{W}{2}r^2<0\qd\mbox{for each}\q r\in(0,r'-\eps).$$ Applying Lemma~\ref{lem: 3D streamline formation} to the above relation guarantees that $(z'-\eps,r'-\eps)$ cannot be contained in the vortex domain, so a contradiction (the reason we have considered $(z'-\eps,r'-\eps)$ instead of $(z',r')$ is that we wanted to avoid the case $(z',r')=(0,L)$).\medskip
\item[$\ast$] Assume \say{$z'>0$}. We suppose the contrary, i.e., $(z',r')$ is in the vortex domain. Here, we have two possibilities: 
$$\text{(I) }\psi(z',r')-Wr'^2/2\geq\gmm\qd\mbox{or}\qd
\text{(II) }\psi(z',r')-Wr'^2/2<\gmm.$$
\begin{itemize}
\item[(I):] We first consider the case (I). Since the vortex domain is open, for small $0<\eps\ll z'$ the point $(z'-\eps,r')$ is also in the vortex domain while $z'-\eps>0$ and 
$$\gmm<\sgm_1:=\psi(z'-\eps,r')-\f{W}{2}r'^2<\psi(0,r')-\f{W}{2}r'^2$$
by the \textit{strict} Steiner symmetry of $\psi$. Note that it implies $r'<L$ (by the definition of $\gmm:=\psi(0,L)-WL^2/2$). For each $r\in[r',L)$, we now consider the equation $$\psi(z,r)-\f{W}{2}r^2=\sgm_1\qd\mbox{for}\q z\geq0.$$ Note that the map $s\mapsto \psi(0,s)-Ws^2/2$ is decreasing in the interval $(0,L)$ (by \textit{(Claim 1)}) and attains the value greater than $\sgm_1$ at $s=r'$ until it reaches $\gmm$ at $s=L$, the above equation is solved as $z=f(r)$ for some $f:[r',L']$ with some constant $L'\in(r',L)$ satisfying $f>0$ in $[r',L')$ with $f(L'):=\lim_{s\uparrow L'}f(s)=0$ (the differentiability of $f$ is obvious from the differentiability and the \textit{strict} Steiner symmetry of $\psi$). Then the particle starting at $(z'-\eps,r')$ increases its $r$-coordinate along the curve $(f(r),r)$ to reach $(0,L')$ (by \textbf{[P3]}) and crosses the symmetry axis ($r$-axis in the moving frame) of the vortex ring $\overline\xi$ in a finite time (recall \textit{(Claim 1)}; the horizontal speed $\overline{v}^z(0,L')=\rd_r\psi(0,L')/L'$ is strictly less than the speed $W$ of the vortex ring $\overline\xi$), which brings us back to the case \say{$z'\leq0$} that has been already negated (for a more rigorous explanation, we refer to \textbf{Case \circled{2}} in the proof of Theorem~\ref{thm: class. of 2D atmos.}, which is containing essentially same arguments with the current setting).
\medskip
\item[(II):] Now we consider the case (II): $\psi(z',r')-Wr'^2/2=:\sgm_2<\gmm$. Now, for each $r\geq r'$, we consider the equation 
\begin{equation}\label{eq1118_2}
\psi(z,r)-\f{W}{2}r^2=\sgm_2\qd\mbox{for}\q z\geq0.
\end{equation}
Since we know 
\begin{equation}\label{eq1118_3}
\psi(0,r)-Wr^2\geq\gmm>\sgm_2\qd\mbox{for each}\q r\in(0,R_2)
\end{equation}
(by \textit{(Claim 2)} and \textit{(Claim 3)}) and  
the map $s\mapsto \psi(0,s)-Ws^2/2$ decreases strictly in the interval $[R_2,\ift)$ to achieve the value $\sgm_2$ for some $s\geq R_2$ (by \textbf{[P2]}), the equation \eqref{eq1118_2} above is solved as $z=\tld{f}(r)$ for some function $\tld{f}:[r',L'']\to[0,\ift)$ with some constant $L''>R_2$ satisfying $\tld{f}>0$ in $[r',L'')$ with $\tld{f}(L''):=\lim_{s\uparrow L''}\tld{f}(s)=0$ (the differentiability of $\tld{f}$ is obvious from the differentiability and the \textit{strict} Steiner symmetry of $\psi$). Similarly with the case (I), the particle starting at $(z',r')$ increases its $r$-coordinate along the curve $(\tld{f}(r),r)$ to reach $(0,L'')$ and crosses the $r$-axis in a finite time (since $\overline{v}^z(0,L'')=\rd_r\psi(0,L'')/L''<0$ by \textbf{[P2]}), which brings us back to the case \say{$z'\leq0$} that has been already negated.
\end{itemize} 
\end{itemize}\medskip
In sum, we complete the proof of \textit{(Claim 7)}.\\

\noindent$\blacktriangleright$ \textit{(Claim 8) The superlevel set $\set{\psi-Wr^2/2>\gmm}\cap\set{r>L}$ is the corresponding vortex domain.}

\medskip
We will first show that the superlevel set $\set{\psi-Wr^2/2>\gmm}\cap\set{r>L}$ is contained in the vortex domain. \textit{(Claim 5)} says that this superlevel set is bounded. For any point $(z,r)$ in this superlevel set, under the 3D axisymmetric flow map $\Phi_3=(\Phi_3^z,\Phi_3^r)$ generated by 
$\xi(t,\cdot):=\overline\xi(\cdot-Wt\bfe_z)$, we have 
$$\psi(\Phi_3(t,(z,r))-Wt\bfe_z)-\f{W}{2}\Phi_3^r(t,(z,r))^2
=\psi(z,r)-\f{W}{2}r^2>\gmm\qd\mbox{for each}\q t\geq0.$$Knowing that $$\psi(s,L)-WL^2/2\leq \psi(0,L)-WL^2/2=\gmm\qd\mbox{for any}\q s\in\bbR,$$ the particle trajectory starting at $(z,r)$ never crosses the line $\set{r=L}$. Instead, it stays in the superlevel set for all time $t\geq0.$ In sum, the superlevel set is contained in the vortex domain. 

Now, we will show that the superlevel set $\set{\psi-Wr^2/2>\gmm}\cap\set{r>L}$ is indeed the vortex domain. We already proved in \textit{(Claim 7)} that the vortex domain is contained in $\set{r>L}$. Knowing this, we suppose the contrary, i.e., there exists a point $(z',r')$ in the vortex domain satisfying $r'>L$ and $\psi(z',r')-Wr'^2/2\leq\gmm$. Since the vortex domain is an open set in $\Pi$, there exists small constant $0<|\eps|\ll1$ such that the point $(z'+\eps,r')$ is in the vortex domain while $z'+\eps\neq0$ and $\psi(z'+\eps,r')-Wr'^2/2=:\gmm'<\gmm$. Meanwhile, we observe that
\begin{equation}\label{eq1106_4}
\gmm'<\psi(0,r)-\f{W}{2}r^2\qd\mbox{for each}\q r\in(0,r'].
\end{equation}
Indeed, above relation is obvious for the case $r'\in(0,R]$ by \textit{(Claim 1)} and \textit{(Claim 5)}. For the case $r'>R$, we note that $s\mapsto\psi(0,s)-Ws^2/2$  decreases strictly in the interval $[R,\ift)\subset[R_2,\ift)$ (by \textbf{[P2]}), implying $$\psi(0,r)-\f{W}{2}r^2>\psi(0,r')-\f{W}{2}r'^2>\psi(z'+\eps,r')-\f{W}{2}r'^2=\gmm'\qd\mbox{for each}\q r\in[R,r')$$ and hence confirming \eqref{eq1106_4}.

\medskip
\noindent\textbf{Case \circled{1}: $z'+\eps<0$}

Applying Lemma~\ref{lem: 3D streamline formation} to the relation \eqref{eq1106_4}, we derive that the point $(z'+\eps,r')$ cannot be contained in the vortex domain, which is a contradiction. So \textbf{Case \circled{1}} is impossible.

\medskip
\noindent\textbf{Case \circled{2}: $z'+\eps>0$} 

The idea is basically the same as the case (II) with \say{$z'>0$} in the proof of \textit{(Claim 7)}, and we will give a proof for completeness. We will derive a contradiction by finding the trajectory starting from $(z'+\eps,r')$ that intrudes the region $\set{z<0}$ in a finite time and recalling that it has been negated in \textbf{Case \circled{1}} above. By the relation \eqref{eq1106_4}, we can solve the equation
$$\psi(z,r)-\f{W}{2}r^2=\gmm'\qd\mbox{for}\q z\geq0,$$ for each fixed $r>0$. This equation is solvable for each $r\in[r', R']$ for some $R'>R$ satisfying $\psi(0,R')-WR'^2/2=\gmm'$ (the relation $R'>R$ is deduced from the fact that $\psi(0,s)-Ws^2/2$ decreases strictly in $s\geq R_2$ by \textbf{[P2]}), and the solution is given as $z=g(r)$ for some function $g:[r',R']\to[0,\ift)$ satisfying $g>0$ in $(r',R')$, $g(r')=z'+\eps$, and $g(R'):=\lim_{s\uparrow R'}g(s)=0$ (the differentiability of $g$ is obvious from the differentiability and the \textit{strict} Steiner symmetry of $\psi$). Then the particle starting at $(z'+\eps,r')$ increases its $r$-coordinate along the curve $(g(r),r)$ to reach $(0,R')$ (by \textbf{[P3]}) and crosses the $r$-axis in a finite time (since $\overline{v}^z(0,R')=\rd_r\psi(0,R')/R'<0$ by \textbf{[P2]}), which brings us back to \textbf{Case \circled{1}} that has already been negated. Hence, \textbf{Case \circled{2}} is also impossible.\\

It completes the proof of [Case II] in Theorem~\ref{thm: class. of 3D atmos.}. \end{proof}

\section{Open problems}\label{sec: open problems}

In this final section, we give some open questions from a mathematical viewpoint.

\medskip
\begin{enumerate}
\item \textbf{Vortex atmosphere for general settings.}

The notion of a vortex atmosphere naturally extends far beyond the setting of steadily translating vortices; for instance, it can be considered for co-rotating vortex pairs and even for non-rigid vortex motions. In 1992, Meleshko et al.~\cite{MKGK1992} investigated the dynamics of vortex atmospheres in various multi-vortex configurations, such as the interaction of two vortex pairs with different circulations (see also \cite{MGK2011}). In particular, they examined vortices exhibiting leapfrogging behavior and observed the existence and evolution of a \textit{leapfrogging atmosphere}.

\medskip

Given the recent surge of interest in the mathematical analysis of leapfrogging vortices \cite{HHM2025, BG2019, GSPBB2021, Aiki2019, Acheson2000, TA2013, DPMW2024, DHLM2025, BCM2025, CFQW2025, BKM2013, SM2003}, determining whether a vortex atmosphere exists in a leapfrogging scenario--and, if so, understanding its geometric structure--appears to be a particularly compelling open problem. 

\medskip
\item \textbf{Stability of vortex atmosphere.}

Research on the stability of vortex solutions with steady (or rigid) motion is extensive (see, e.g., \cite{Wan1986, Wan88, BNL2013, BCM2019, AC2022, CL22, IJ2022, CJY2024, GS2024, AJY2025, Wang2024, Choi2024, HHR2024, CJS2025, ACJ2025, CLW2025, CQZZ2025}), and most studies have focused on the stability of the vorticity (or its stream function). See also \cite{BM2015, IJ2023, MZ23} for shear flow settings.
\medskip

However, the focus of stability analysis can instead be shifted from the vorticity to the vortex atmosphere. For instance, one may investigate the stability of a given vortex atmosphere by examining the topological or quantitative changes in its streamlines under perturbations. Such an analysis requires a notion of the vortex atmosphere for vortex solutions exhibiting non-rigid motion, as discussed in (1) above.

\medskip
\item \textbf{Ratio of $|Atmosphere|:|Core|$.}

Hicks \cite{Hicks1919} calculated the volume of the atmosphere (the liquid carried forward) across a range of ratios between the core radius and ring radius. He pointed out that the thinner the ring is (compared to the ring's size), the thinner its toroidal-shape atmosphere becomes. However, to our knowledge, the asymptotic rate of $|Atmosphere|:|Core|$  remains open.

\medskip
\item \textbf{Existence of revolved-lemniscate atmosphere.}

Although the atmosphere of revolved lemniscate type (Figure~\ref{subfig: lemniscate} and the 
case $\overline{v}^z(0,0)=W$ in \textbf{[Case I]} of Theorem~\ref{thm: class. of 3D atmos.}) 
has been discussed in the literature (e.g., \cite{Hicks1919, Eisenga1997, MS2022}), its rigorous existence remains open. Unlike the spheroidal and toroidal cases 
(Theorems~\ref{thm: 3D sphere} and \ref{thm: 3D donut}), for which Hill’s spherical vortex 
and the line-vortex ring serve as benchmark solutions, establishing the existence of 
a revolved lemniscate atmosphere appears considerably more challenging, owing to the absence 
of a suitable model problem that could serve as an analogue.

\end{enumerate}

\appendix
\section{Proof of Proposition~\ref{prop: decay of stream/term}}\label{appendix}
\begin{proof}[Proof of Proposition~\ref{prop: decay of stream/term}]
For a given axisymmetric $\overline\xi\in L^\ift(\bbR^3)$ with a compact support, its 3D axisymmetric stream function $\psi=\psi[\overline\xi]$ satisfies $|\psi/r|=O(|(z,r)|^{-2})$ as 
$|(z,r)|\to\ift$ (see \cite[Lemma 1.1]{FT1981}), which gives 
$|\psi(z,r)/r^2|\leq C_0 r^{-1}|(z,r)|^{-2}\leq C_0 r^{-3}$ as $|(z,r)|\gg1$, for some constant $C_0>0$. Moreover, as $\psi(z,0)=0$ for each $z\in\bbR$, the mean value theorem tells us that for each $z\in\bbR$ and $r>0$, $$\f{\psi(z,r)}{r^2}=\f{(\rd_r\psi)(z,r')}{r}\qd\mbox{for some}\q r'=r'(z,r)\in(0,r).$$ For each $(z,r)$ satisfying $|z|\gg1$, we get $|(z,r')|\gg1$,
and hence we apply the relation $|\nb\psi/r|=O(|(z,r)|^{-3})$ as 
$|(z,r)|\to\ift$ (see \cite[Lemma 1.1]{FT1981}) to obtain that
$|\psi(z,r)/r^2|\leq |\nb\psi(z,r')/r'|\leq C_1 |(z,r')|^{-3}\leq C_1 |z|^{-3}$ for some constant $C_1>0$. Now, we consider a constant $R\gg1$ large enough and any point $(z,r)$ satisfying $|(z,r)|\geq R$. Then we have either $|z|\geq R/\sqrt2\gg1$ or $r\geq R/\sqrt2\gg1$. For the former case, we use the relation $|\psi(z,r)/r^2|\leq C_1 |z|^{-3}$, while for the later case, we apply $|\psi(z,r)/r^2|\leq C_0 r^{-3}$. In sum, we get $$\left|\f{\psi(z,r)}{r^2}\right|\leq C_2 R^{-3}\qd\mbox{for}\q R\gg1$$ for the constant $C_2=2^{3/2}\max\set{C_0,C_1}$, as desired.

\medskip
Similarly, for a given odd symmetric $\overline\omg\in L^\ift(\bbR^2)$ and its stream function $\calG=\calG[\overline\omg]$, we borrow the estimate $|\calG(x_1,x_2)|\leq C_3 |x_2|^{1/2}$ for some constant $C_3>0$ from \cite[Proposition 2.1]{AC2022}, and we obtain that $|\calG(x_1,x_2)/x_2|\leq C_3 |x_2|^{-1/2}$ if $x_2\neq0$. On the other hand, 
since $\calG(x_1,0)=0$ for each $x_1\in\bbR$, the mean value theorem once again gives that for each $(x_1,x_2)\in\bbR$ with $x_2\neq0$,
$$\f{\calG(x_1,x_2)}{x_2}=\left(\rd_{x_2}\calG\right)(x_1,x_2')\qd\mbox{for some}\q x_2'=x_2'(x_1,x_2)\in(\min\set{0,x_2},\max\set{0,x_2}).$$ Applying the boundedness of the support of $\overline\omg$ to the relation $|\nb\calG(\bfx)|\leq C_4\int_{\bbR^2}\f{1}{|\bfx-\bfy|}|\overline\omg(\bfy)|\dd\bfy$  for some constant $C_4>0$, we have $|\nb\calG(\bfx)|=O(|\bfx|^{-1})$ as $|\bfx|\gg1$. Hence, for any $(x_1,x_2)\in\bbR^2$ such that $|x_1|\gg1$, we have $|\calG(x_1,x_2)/x_2|\leq |\nb\calG(x_1,x_2')|\leq C_5|(x_1,x_2')|^{-1}\leq C_5|x_1|^{-1}$ for some constant $C_5>0$. As with the case of the axisymmetric case $\overline\xi$, we consider a large enough $R\gg1$, and suppose a point $(x_1,x_2)\in\bbR^2$ satisfies $|(x_1,x_2)|\geq R$. Then we have either $|x_1|\geq R/\sqrt2\gg1$ or $|x_2|\geq R/\sqrt2\gg1$, which leads to $|\calG(x_1,x_2)/x_2|\leq 2^{1/2} C_5 R^{-1}$ or $|\calG(x_1,x_2)/x_2|\leq 2^{1/4}C_3 R^{-1/2}$ respectively, as desired. It completes the proof.

\end{proof}

\q \section*{Acknowledgments}

KC and YS were supported by the National Research Foundation of Korea (NRF) grant funded by the Korean government (MSIT) (No. RS-2023-00274499). IJ was supported by the NRF grant from the Korea government (MSIT), No. 2022R1C1C1011051, RS-2024-00406821.

\bibliographystyle{plain}

\end{document}